\numberwithin{equation}{section}
\newtheorem{theorem}{Theorem}[section]
\newtheorem{lemma}[theorem]{Lemma}
\newtheorem{proposition}[theorem]{Proposition}
\newtheorem*{AssumpA}{Assumption A $(\rho,\varepsilon)$}
\newtheorem*{AssumpA'}{Assumption A$'$ $(\rho,\varepsilon)$}
\newtheorem*{AssumpA''}{Assumption A$''$ $(\rho,\varepsilon)$}
\theoremstyle{definition}
\newtheorem{remark}[theorem]{Remark}
\theoremstyle{definition}
\theoremstyle{definition}
\newtheorem{assumption}[theorem]{Assumption}
\def\dashint{\operatorname%
{\,\,\text{\bf--}\kern-.98em\DOTSI\intop\ilimits@\!\!}}
\def\sft{{\sf t}}
\def\sfx{{\sf x}}
\def\bC{\mathbb{C}}
\def\bH{\mathbb{H}}
\def\bL{\mathbb{L}}
\def\bR{\mathbb{R}}
\def\bZ{\mathbb{Z}}
\def\fH{\mathfrak{H}}
\def\cD{\mathcal{D}}
\def\cH{\mathcal{H}}
\def\cL{\mathcal{L}}
\def\cM{\mathcal{M}}
\def\cQ{\mathcal{Q}}
\def\cW{\mathcal{W}}
\begin{document}

\title[Elliptic and parabolic equations in weighted Sobolev spaces]{Elliptic and parabolic equations with measurable coefficients in weighted Sobolev spaces}

\author[H. Dong]{Hongjie Dong}
\address[H. Dong]{Division of Applied Mathematics, Brown University, 182 George Street, Providence, RI 02912, USA}

\email{Hongjie\_Dong@brown.edu}

\thanks{H. Dong was partially supported by the NSF under agreement DMS-1056737.}

\author[D. Kim]{Doyoon Kim}
\address[D. Kim]{Department of Applied Mathematics, Kyung Hee University, 1732 Deogyeong-daero, Giheung-gu, Yongin-si, Gyeonggi-do 446-701, Republic of Korea}

\email{doyoonkim@khu.ac.kr}

\thanks{D. Kim was supported by Basic Science Research Program through the National Research Foundation of Korea (NRF) funded by the Ministry of Science, ICT \& Future Planning (2011-0013960)
and by Kyung Hee University (20130747).}

\subjclass[2010]{35J25, 35K20, 35R05}

\keywords{elliptic and parabolic equations, weighted Sobolev spaces, measurable coefficients}

\begin{abstract}
We consider both divergence and non-divergence parabolic equations on a half space in weighted Sobolev spaces. All the leading coefficients are assumed to be only measurable in the time and one spatial variable except one coefficient, which is assumed to be only measurable either in the time or the spatial variable. As functions of the other variables the coefficients have small bounded mean oscillation (BMO) semi-norms. The lower-order coefficients are allowed to blow up near the boundary with a certain optimal growth condition. As a corollary, we also obtain the corresponding results for elliptic equations.
\end{abstract}

\maketitle

\section{Introduction}

In this paper we study parabolic equations in non-divergence form and divergence form:
\begin{equation*}
- u_t + a^{ij} D_{ij} u +b^i D_i u+ c u- \lambda u = f,
\end{equation*}
\begin{equation}
							\label{eq0919_2}
- u_t + D_i(a^{ij}D_j u +b^i u)+\hat b^i D_i u +  c u- \lambda u = D_i g_i +f
\end{equation}
in $(-\infty, T) \times \bR^d_+$, as well as the corresponding elliptic equations:
$$
a^{ij} D_{ij} u +b^i D_i u+ c u- \lambda u = f,
$$
$$
D_i(a^{ij}D_j u +b^i u)+\hat b^i D_i u +  c u- \lambda u = D_i g_i +f
$$
in $\bR^d_+$, where $\bR^d_+ = \{x = (x_1,x') \in \bR^d, x_1 > 0, x' \in \bR^{d-1}\}$ and $\lambda$ is a non-negative number.
We consider the equations in the weighted Sobolev spaces  $H_{p,\theta}^\gamma(\bR^d_+)$ and $\bH_{p,\theta}^\gamma((-\infty,T)\times\bR^d_+)$, which were introduced in a unified manner by N. V. Krylov  \cite{MR1708104} for all $\gamma \in \bR$.
In particular, if $\gamma$ is a non-negative integer,
$$
H_{p, \theta}^{\gamma}=H_{p, \theta}^{\gamma}(\bR^d_+)
= \{ u : x_1^{|\alpha|} D^{\alpha} u \in L_{p, \theta}(\bR^{d}_{+}) \
\forall \alpha : 0 \le |\alpha| \le \gamma \},
$$
where $L_{p, \theta}(\bR^{d}_{+})$ is an $L_p$ space with the measure
$\mu_d(dx) = x_1^{\theta - d} \, dx$.

Since the work in \cite{MR1708104}, there has been much attention to the solvability theory for equations in the weighted Sobolev spaces $H_{p,\theta}^\gamma$; see \cite{MR2111792, MR2561181, MR2990037, MR3147235}.
The necessity of such theory came from stochastic partial differential equations (SPDEs) and is well explained in \cite{MR1262972}.
For SPDEs in weighted Sobolev spaces, we refer the reader to \cite{
MR1720129, MR2073414, MR2102888, MR2519358, MR3034605}.

In this paper we extend the existing theory for equations in the weighted Sobolev spaces to a considerably more general setting. Compared to the known results in the literature, the features of our results can be summarized as follows:
\begin{itemize}
\item The leading coefficients $a^{ij}$ are in a substantially larger class of functions.

\item In the divergence case, the space of data (or {\em free terms}) is larger.

\item The lower-order coefficients are not required to approach zero as $x_1\to +\infty$.
\end{itemize}

The most significant difference from the previous results is that we allow the leading coefficients $a^{ij}$ to be merely measurable in $x_1$-direction. That is, we do not assume any regularity conditions on $a^{ij}$ as functions of $x_1$ variable.
In the parabolic case, we further allow all the leading coefficients $a^{ij}(t,x)$ to be merely measurable in $(t,x_1)$ except $a^{11}(t,x)$, which is either measurable in $t$ or in $x_1$.
As functions of the other variables, the coefficients $a^{ij}$ have small bounded mean oscillations (BMO) (see Assumptions in Section \ref{sec02}).

In the literature, the Laplace and heat equations in the weighted Sobolev spaces $H_{p,\theta}^\gamma$ were first considered in \cite{MR1708104}, when $\theta$ is in the optimal range $(d-1,d-1+p)$.
These results were extended to non-divergence type elliptic and parabolic equations with continuous coefficients in \cite{MR2111792}.
Kozlov and Nazarov \cite{MR2561181} treated parabolic equations with coefficients $a^{ij}=a^{ij}(t)$ in mixed space-time norms with the same type of weights.
Coefficients with small mean oscillations were considered in \cite{MR2519358} for SPDEs in the setting of $H_{p,\theta}^\gamma$.
Recently, in \cite{MR2990037, MR3147235} the authors treated non-divergence and divergence type equations, respectively, with coefficients having small mean oscillations. For instance, in \cite{MR3147235} the coefficients are assumed to have small mean oscillations in both the space and time variables.

The class of coefficients in this paper (called {\em partially BMO coefficients}) has been studied in \cite{MR2338417, MR2332574, MR2833589} for non-divergence type elliptic and parabolic equations, and in \cite{MR2601069, MR2764911} for divergence type equations in the usual Sobolev spaces (or Sobolev spaces without weights).
For more results on equations/systems with coefficients measurable in one spatial direction, we also refer the reader to \cite{MR2300337, MR2540989, MR2896169} in the non-divergence case and \cite{MR2800569, MR2680179, MR2835999, MR3073000} in the divergence case.
Regarding the unique solvability of equations in Sobolev spaces, it is in some sense a minimal assumption to allow the leading coefficients to be measurable in one spatial direction. In fact, the counterexamples in \cite{MR0226179, MR1612401, MR0159110} show that the unique solvability in Sobolev spaces (without weights) may fail if coefficients are merely measurable in two spatial directions.

In the divergence case, we take  larger function spaces for data on the right-hand side of the equations than those in the previous results.
For instance, in \cite{MR2519358, MR3147235} the right-hand sides of the equations under consideration have the form $D_i g_i + f$ with $g = (g_1, \ldots, g_d) \in \bL_{p,\theta}$ and $f \in M^{-1}\bH^{-1}_{p,\theta}$.  See Section \ref{sec02} for the definitions of these spaces. Thus, as explained in \cite{MR1708104} (also see \eqref{eq0102_5} in Section \ref{sec02}), $D_i g_i + f$ is indeed in $M^{-1}\bH_{p,\theta}^{-1}$.
In this paper we assume that $f$ in \eqref{eq0919_2} belongs to $\bL_{p,\theta}$ when $\lambda > 0$. Therefore, in our case the right-hand side of \eqref{eq0919_2} is in a larger space $M^{-1}\bH^{-1}_{p,\theta} + \bL_{p,\theta}$ when $\lambda > 0$.

As in \cite{MR2111792, MR2519358, MR2990037, MR3147235} we allow the lower-order coefficients to blow up at certain rates near the boundary.
On the other hand, in the previous results, those coefficients need to approach zero far away from the boundary when equations are considered in a half space.
As pointed out in \cite{MR2519358}, in some applications of PDEs or SPDEs in bounded domains this is irrelevant because far from the boundary everything is taken care of by estimates in the usual Sobolev spaces.
Nevertheless, in this paper we remove the smallness restriction on the lower-order coefficients. Instead, we assume that, as in the results for equations in the usual Sobolev spaces, away from the boundary those coefficients are only bounded. This is made possible by having more general data on the right-hand side and introducing the parameter $\lambda$ in the equations.

The overall procedure to obtain the main results is as usual by deriving a priori estimates and then using the method of continuity. In general, one first derives the a prior estimate (and the unique solvability) for relatively simple equations such as the Laplace or heat equation.
However, we cannot start with such model equations because our coefficients are merely measurable in $x_1$.
Hence a crucial step of our proof is to obtain all the necessary results for parabolic equations with simple coefficients. Here by simple coefficients we mean that they are measurable functions of only $(t,x_1)$ without any smoothness assumptions. The coefficient $a^{11}$ is either $a^{11}(t)$ or $a^{11}(x_1)$.
Then we prove certain H\"older estimates and mean oscillation estimates for equations with simple coefficients, and incorporate a perturbation argument to deduce mean oscillation estimates for equations with  partially BMO coefficients.
While establishing mean oscillation estimates and a priori estimates, we follow the idea in \cite{MR2519358} of reducing the estimate of the highest order norm of solutions to that of lower order norms, and take full advantage of the known results for equations with the same coefficients in the usual Sobolev spaces. In particular, in the non-divergence case we only estimate the mean oscillation of $Du$ instead of $D^2u$. In fact, it is not feasible to directly estimate the mean oscillation of $D^2 u$ due to the irregularity of the coefficients, even if $a^{ij}=a^{ij}(t)$ for all $i, j = 1, \ldots, d$.
Finally, we use the Fefferman-Stein theorem on sharp functions
and the Hardy-Littlewood maximal function theorem with weighted measures. One may find in \cite{MR2435520} these theorems in the forms needed for our purpose.

Following the arguments in \cite{MR2481294} (also see \cite{MR2990037, MR3147235}), as an application one can obtain the corresponding $L_p$-theory for SPDEs with the coefficients in this paper. We also note that our results can be extended to Cauchy problems with appropriate initial conditions.







The organization of the paper is as follows. In Section \ref{sec02} we state the assumptions and main results. In the subsequent sections, we deal with only parabolic equations because the results for the elliptic case follow from those for parabolic equations. We then deal with both non-divergence and divergence equations with simple coefficients in Section \ref{sec03}.
In Section \ref{sec04} we obtain mean oscillation estimates for equations with simple coefficients. Finally we prove our main results for the non-divergence case and for the divergence case in Sections \ref{sec05} and \ref{sec06}, respectively.

We finish the introduction by summarizing the notation used in this paper:
$$
M u = x_1 u,
\quad M^{-1}u = x_1^{-1}u,
\quad \bR^{d+1}_+=\bR\times \bR^d_+,
$$
$$
\bR_+ = \bR^1_+ = (0,\infty),
\quad
\bR_T = (-\infty,T),\quad \bR_T^{d+1}=\bR_T\times \bR^{d},
\quad
X=(t,x) \in \bR^{d+1}_T,
$$
$$
B_r'(x')=\{ y\in \bR^{d-1}\,|\,|y'-x'|<r\},\quad Q_r'(X')=(t-r^2,t)\times B_r'(x),
$$
$$
B_r(x)=(x_1-r, x_1+r) \times B'_r(x'),
\quad Q_r(X)=(t-r^2,t)\times B_r(x),
$$
$$
B_r^+(x)=B_r(x)\cap \bR^d_+,\quad Q_r^+(X)=(t-r^2,t)\times B_r^+(x),
$$
$$
B_r^+(x_1)=B_r^+(x_1,0), \quad Q_r^+(x_1)=Q_r^+(0,x_1,0).
$$

\section{Assumptions and main results}
							\label{sec02}

Throughout the paper, we assume that the leading coefficients $a^{ij}$ satisfy the following ellipticity condition and boundedness condition

$$
\delta|\xi|^2 \le a^{ij} \xi_i \xi_j,
\quad
|a^{ij}| \le \delta^{-1}.
$$
In the non-divergence case, without loss of generality, we assume that $a^{ij} = a^{ji}$.

We now introduce some function spaces which will be used in this paper.
When $\gamma$ is a non-negative integer, $H_{p,\theta}^\gamma$ ($=H_{p,\theta}^\gamma(\bR^d_+)$, $H_{p,\theta}^0 = L_{p,\theta}$) is introduced in the introduction. In general, if $\gamma$ is an arbitrary real number,  $H_{p,\theta}^\gamma$ is defined as follows.
Take and fix a nonnegative function $\zeta \in C_0^\infty(\bR_+)$ such that
$$
\sum_{n = -\infty}^{\infty} \zeta^p\left(e^{x_1-n}\right) \ge 1
$$
for all $x_1 \in \bR$.
For $\gamma, \theta \in \bR$, and $p \in (1,\infty)$,
let $H_{p,\theta}^\gamma$  be the set of all distributions $u$ on $\bR^d_+$ such that
$$
\| u \|_{\gamma, p,\theta}^p
:= \sum_{n = -\infty}^{\infty} e^{n \theta} \|u(e^n \cdot) \zeta(x_1)\|_{\gamma,p}^p < \infty,
$$
where $\| \cdot \|_{\gamma,p}$ is the norm of the Bessel potential space $H_p^\gamma(\bR^d)$. We recall that the operator $MD$ is bounded from $H_{p,\theta}^\gamma$ to $H_{p,\theta}^{\gamma-1}$; see \cite{MR1708104}.
For parabolic equations, we define the function spaces
$$
\bH_{p,\theta}^\gamma (S,T) = L_p \big( (S,T), H_{p,\theta}^\gamma\big), \quad
\bL_{p,\theta}(S,T) = L_p \big( (S,T), L_{p,\theta} \big),
$$
where $-\infty \le S < T \le \infty$.
Occasionally, we denote $f \in M^k L_{p,\theta}$ if $M^{-k} f \in L_{p,\theta}$, $k \in \bZ$, that is, $x_1^{-k} f \in L_{p,\theta}$, and
$f \in M^k\bL_{p,\theta}(S,T)$ if $M^{-k} f \in \bL_{p,\theta}(S,T)$.
For non-divergence type parabolic equations, we denote $u \in \fH_{p,\theta}^2(-\infty,T)$ if
$$
M^{-1} u, \,\,
Du,  \,\,
M D^2 u, \,\, M u_t \in \bL_{p,\theta}(-\infty,T).
$$
We set
$$
\|u\|_{\fH_{p,\theta}^2(-\infty,T)} = \|M^{-1} u \|_{p,\theta} + \|Du \|_{p,\theta} + \|M D^2 u \|_{p,\theta} + \|Mu_t\|_{p,\theta},
$$
where $\|\cdot\|_{p,\theta} = \|\cdot\|_{\bL_{p,\theta}(-\infty,T)}$.

For divergence type parabolic equations, we denote
$u \in \cH_{p,\theta}^{1,\lambda}(-\infty,T)$
if
$$
\sqrt \lambda u,\,\,M^{-1} u, \,\,
Du \in \bL_{p,\theta}(-\infty,T),
$$
and $u_t\in M^{-1}\bH^{-1}_{p,\theta}(-\infty,T)+
\sqrt\lambda \bL_{p,\theta}(-\infty,T)$. By Remark 5.3 in \cite{MR1708104}, for any $h \in M^{-1}\bH_{p,\theta}^{-1}(-\infty,T)$, there exists $g=(g_1,\ldots,g_d)$ satisfying
$g \in \bL_{p,\theta}(-\infty,T)$,
$D_i g_i = h$, and
\begin{equation}
							\label{eq0102_5}
\|M h\|_{\bH_{p,\theta}^{-1}(-\infty,T)} \le N \sum_i \|g_i\|_{\bL_{p,\theta}(-\infty,T)}
\le N \|M h\|_{\bH_{p,\theta}^{-1}(-\infty,T)}.
\end{equation}
Thus one can find $f, g=(g_1,\dots,g_d) \in \bL_{p,\theta}(-\infty,T)$
such that
$$
u_t = D_i g_i + \sqrt\lambda f
$$
in $\bR_T \times \bR^d_+$, or in the weak formulation
\begin{equation*}
\int_{\bR_T\times\bR^d_+} u \varphi_t \, dx \, dt
= 
\int_{\bR_T\times\bR^d_+} g_i D_i \varphi \, dx \, dt
- \sqrt \lambda \int_{\bR_T\times\bR^d_+} f \varphi \, dx \, dt
\end{equation*}
for all $\varphi \in C_0^{\infty}(\bR_T\times\bR^d_+)$. We set  \begin{align*}
&\|u\|_{\cH_{p,\theta}^{1,\lambda}(-\infty,T)}\\
&=\inf\{\sqrt\lambda\|u\|_{p,\theta}+\|M^{-1}u\|_{p,\theta}+\|Du\|_{p,\theta}
+\|g\|_{p,\theta}+\|f\|_{p,\theta}\,|\,u_t = D_i g_i + \sqrt\lambda f\}.
\end{align*}
Note that when $\lambda=0$, $\cH_{p,\theta}^{1,\lambda}(-\infty,T)=
\cH_{p,\theta}^{1}(-\infty,T)$, which is used in \cite{MR1708104}. As in \cite{MR1708104}, it is easily seen that $C_0^{\infty}((-\infty,T] \times \bR^d_+)$ is dense in $\cH_{p,\theta}^{1,\lambda}(-\infty,T)$.

We will use some results for non-divergence type parabolic equation in Sobolev spaces without weights. Recall
$$
W_p^{1,2}\left((S,T) \times \Omega \right)
= \left\{ u \, | \,u, Du, D^2 u, u_t \in L_p \left((S,T) \times \Omega \right) \right\}.
$$
Here $\Omega$ is either $\bR^d$ or $\bR^d_+$. Similarly,
for divergence type parabolic equations, we recall
$$
\cH_p^1\left((S,T) \times \Omega\right)
= \left\{ u \, | \, u, Du \in L_p\left((S,T) \times \Omega\right), u_t \in \bH_p^{-1}\left((S,T) \times \Omega\right)\right\},
$$
where
$$
\bH_p^{-1}\left((S,T) \times \Omega\right)
= \left\{v \, | \, v = D_i g_i + h, \, g_i, h \in L_p\left((S,T) \times \Omega\right) \right\}.
$$

In order to state another assumption on the coefficients $a^{ij}$,
we introduce the following notation.
Set $B_r' = B_r'(0)$ and $|B'_r|$ to be the volume of $B'_r$.
We also set
$$
\mu_{d}(dx)=x_1^{\theta-d} \, dx,
\quad
\mu_d(\Omega) = \int_{\Omega} \mu_{d}(dx),
$$
$$
\mu_{d+1}(dx\,dt)=x_1^{\theta-d} \, dx \, dt,\quad \mu_{d+1}(\cD) = \int_{\cD} \mu_{d+1}(dx\,dt),
$$
for $\Omega \subset \bR^d_+$ and $\cD \subset \bR^{d+1}_+$.
Throughout the paper, unless specified otherwise, $\mu$ means $\mu_{d+1}$, a measure on $\bR^{d+1}_+$.

For a function $g$ on $\bR^{d+1}_+$, denote
$$
\left[ g(t,\cdot)\right]_{B_r(x)}=
\dashint_{B_r(x)}
\left| g(t, y) -
\dashint_{B_r(x)}
g(t,z) \, \mu_d(dz) \right|\, \mu_d(dy),
$$
$$
\left[ g(t,x_1,\cdot)\right]_{B'_r(x')}=
\dashint_{B'_r(x')}\left| g(t, x_1, y') - \dashint_{B'_r(x')} g(t,x_1, z')\, dz' \right| \, dy',
$$
$$
\left[ g(\cdot,x_1,\cdot)\right]_{Q'_r(t,x')}=
\dashint_{Q'_r(t,x')}\left| g(t, x_1, y') -
\dashint_{Q'_r(t,x')}g(s,x_1,z') \,ds\,dz' \right| \, dt\,dy'.
$$
Then we define the mean oscillation of $g$ in $Q_r(s,y)$ with respect to $x$ as
$$
\text{osc}_{\sfx} \left(g,Q_r(s,y)\right)
= \dashint_{s-r^2}^{\,\,s}
\left[ g(\tau, \cdot)\right]_{B_r(y)} \, d\tau,
$$
and, for $\rho\in (1/2,1)$, denote
$$
g^{\sfx,\#}_\rho =\sup_{(s,y) \in \bR^{d+1}_+}\sup_{r\in (0, \rho y_1]}\text{osc}_\sfx\left(g,Q_r(s,y)\right).
$$
We also define the mean oscillation of $g$ in $Q_r(s,y)$ with respect to $(t,x')$ as
$$
\text{osc}_{(\sft,\sfx')} \left(g,Q_r(s,y)\right)
= \dashint_{y_1-r}^{\,\,y_1+r} \left[ g(\cdot,z_1, \cdot)\right]_{Q_r'(s,y')} \mu_1(dz_1),
$$
and denote
$$
g^{(\sft, \sfx'),\#}_\rho =\sup_{(s,y) \in \bR^{d+1}_+}\sup_{r\in (0, \rho y_1]}\text{osc}_{(\sft,\sfx')}\left(g,Q_r(s,y)\right).
$$
Furthermore, we define the mean oscillation of $g$ in $Q_r(s,y)$ with respect to $x'$ as
$$
\text{osc}_{\sfx'}
\left(g,Q_r(s,y)\right)
=  \dashint_{s-r^2}^{\,\,s}
\dashint_{y_1-r}^{\,\,y_1+r}
\left[ g(\tau,z_1,\cdot)\right]_{B'_r(y')} \mu_1(d z_1) \, d\tau,
$$
and denote
$$
g^{\sfx', \#}_\rho =\sup_{(s,y) \in \bR^{d+1}_+}\sup_{r\in (0, \rho y_1]}\text{osc}_{\sfx'}\left(g,Q_r(s,y)\right).
$$
In the case when $g$ is independent of $t$,
i.e., if $g$ is a function of $x \in \bR^d_+$, we set
$$
\text{osc}_{\sfx'} \left(g,B_r(y)\right)
= \dashint_{y_1-r}^{\,\,y_1+r} \left[ g(z_1,\cdot) \right]_{B'_r(y')} \mu_1(dz_1),
$$
$$
g^{\sfx',\#}_\rho=\sup_{y \in \bR^{d}_+}\sup_{r\in (0, \rho y_1]} \text{osc}_{\sfx'}
\left(g,B_r(y)\right).
$$

Using the above notation with $a^{ij}$ in place of $g$
we state the following regularity assumptions on $a^{ij}$,
where the parameters $\rho\in (1/2,1)$ sufficiently close to 1 and $\varepsilon>0$ sufficiently small will be specified later.

\begin{AssumpA}
We have
$$
(a^{11})_{\rho}^{\sfx,\#}
+\sum_{ij>1}(a^{ij})_{\rho}^{\sfx',\#} \le \varepsilon.
$$
\end{AssumpA}

\begin{AssumpA'}
We have
$$
(a^{11})_{\rho}^{(\sft,\sfx'),\#}
+\sum_{ij>1}(a^{ij})_{\rho}^{\sfx',\#} \le \varepsilon.
$$
\end{AssumpA'}	

Now we state the main results of the paper.

\begin{theorem}
                            \label{thm1}
Let $T \in (-\infty, \infty]$, $\lambda \ge 0$, $1< p < \infty$, and $\theta\in (d-1,d-1+p)$. Then there exist positive constants $\rho\in (1/2,1)$, $\varepsilon$, and $\varepsilon_1$ depending only on $d$, $\delta$, $p$, and $\theta$ such that under Assumption A ($\rho,\varepsilon$) or Assumption A$'$ ($\rho,\varepsilon$), and the growth condition
\begin{equation}
                                    \label{eq9.34}
|x_1 b^i|+|x_1^2 c|\le \varepsilon_1,
\end{equation}
the following assertions hold.

(i) Suppose that $u \in \fH_{p,\theta}^2(-\infty,T)$ satisfies
\begin{equation}
							\label{eq9.22}
- u_t + a^{ij} D_{ij} u +b^i D_i u+ c u- \lambda u = f
\end{equation}
in $\bR_T \times \bR^d_+$,
where $f \in M^{-1}\bL_{p,\theta}(-\infty,T)$.
Then
\begin{equation}
							\label{eq9.23}
\lambda \| M u \|_{p,\theta}
+ \| M^{-1} u \|_{p,\theta} + \| D u \|_{p,\theta}
+ \| M D^2 u \|_{p,\theta} + \| M u_t \|_{p,\theta}
\le N \|Mf\|_{p,\theta},
\end{equation}
where $\| \cdot \|_{p,\theta} = \| \cdot \|_{\bL_{p,\theta}(-\infty,T)}$ and $N = N(d, \delta, \theta, p)$.

(ii) For any $f \in M^{-1}\bL_{p,\theta}(-\infty,T)$, there is a unique solution $u \in \fH_{p,\theta}^2(-\infty,T)$ to the equation \eqref{eq9.22}.

(iii) If the condition \eqref{eq9.34} is satisfied only for $x_1 \in (0,\sigma]$ for some $\sigma \in (0,\infty)$, and $|b^i|, |c| \le K$ for $x_1 \in (\sigma,\infty)$,
then there exists a constant $\lambda_0 \ge 0$ depending only on $d$, $\delta$, $p$, $\theta$, and $K$ such that the above two assertions hold true whenever $\lambda \ge \lambda_0$.
\end{theorem}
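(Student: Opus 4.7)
The plan is to treat the parabolic case directly, since the elliptic results follow by considering $t$-independent data and solutions. Part (ii) is obtained from (i) by the method of continuity, using as anchor the solvability in $\fH^2_{p,\theta}$ for the simple model equation from Section \ref{sec03}; and (iii) reduces to (i) via an $x_1$-cutoff combined with the standard unweighted $W^{1,2}_p$-theory, where $\lambda$ large absorbs the bounded lower-order coefficients away from the boundary. So the heart of the matter is the a priori estimate \eqref{eq9.23} of part (i).

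To establish that estimate, I would first exploit the smallness condition \eqref{eq9.34}: the lower-order terms $b^iD_iu+cu$ are bounded in $M^{-1}\bL_{p,\theta}$ by $\varepsilon_1(\|Du\|_{p,\theta}+\|M^{-1}u\|_{p,\theta})$, so after proving the estimate for the operator $-\partial_t+a^{ij}D_{ij}-\lambda$ I can absorb these terms on the left. The centerpiece is then a pointwise mean-oscillation bound schematically of the form
\[
\mathrm{osc}_{\sfx}\bigl(Du,Q_r(X)\bigr)\le N\varepsilon^{\kappa}\bigl(\cM(MD^2u)+\cM(M^{-1}u)+\cM(Du)\bigr)(X)+N\cM(Mf)(X)
\]
for $r\le \rho y_1$, where $\cM$ is the Hardy-Littlewood maximal operator with respect to the weighted measure $\mu$. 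This bound is proved by a Campanato-type comparison: write $u=w+v$ on $Q_r(X)$ where $w$ solves the equation whose coefficients have been averaged in $x'$ (and in $t$, in the A$'$ case) so that only the $(t,x_1)$-dependence remains. The oscillation estimate for $Dw$ is provided by the Hölder and mean-oscillation results for simple coefficients in Section \ref{sec04}, while the perturbation $v$ is controlled via $L_p$-solvability for the same simple-coefficient operator applied to the right-hand side $(a^{ij}-\bar a^{ij})D_{ij}u$, whose $\bL_{p,\theta}$-norm is of order $\varepsilon^{\kappa}\|MD^2u\|_{p,\theta}$ thanks to Assumption A or A$'$.

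Applying the Fefferman-Stein sharp function theorem and the Hardy-Littlewood maximal function theorem for $\mu$ (available since the range $\theta\in(d-1,d-1+p)$ makes $x_1^{\theta-d}$ an $A_p$-type weight; see \cite{MR2435520}) converts the pointwise bound into $\|Du\|_{p,\theta}\le N\varepsilon^{\kappa}\|MD^2u\|_{p,\theta}+\text{lower order}$. To close the loop I would follow the device of \cite{MR2519358}: rather than estimating $MD^2u$ by its sharp function, recover it from the equation once $Du$ is controlled, by invoking the $W^{1,2}_p$-solvability for the model equation with $(t,x_1)$-measurable coefficients to bound $\|MD^2u\|_{p,\theta}+\|Mu_t\|_{p,\theta}$ by $\|Du\|_{p,\theta}+\|M^{-1}u\|_{p,\theta}+\lambda\|Mu\|_{p,\theta}+\|Mf\|_{p,\theta}$. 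Together with Hardy's inequality $\|M^{-1}u\|_{p,\theta}\le N\|Du\|_{p,\theta}$, valid for $\theta$ in the stated range, this yields \eqref{eq9.23} after $\varepsilon$ is chosen small enough and $\varepsilon_1$ is absorbed. The principal obstacle is exactly the comparison step: because $a^{ij}$ is merely measurable in $x_1$, one cannot freeze the $x_1$-variable, so $w$ must retain the full $(t,x_1)$-dependence of the coefficients and every oscillation estimate for $Dw$ must come from the one-dimensional theory of Section \ref{sec03}. This is the reason why only $Du$, not $D^2u$, admits a sharp function bound, and why the asymmetric treatment of $a^{11}$ in Assumptions A and A$'$ is forced upon us.
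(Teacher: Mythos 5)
Your outline captures the paper's high-level strategy---Campanato comparison against the simple-coefficient operator of Section~\ref{sec03}, the weighted Fefferman--Stein and Hardy--Littlewood theorems, reduction of $\|MD^2u\|+\|Mu_t\|$ to $\|Du\|$ via the unweighted $L_p$ theory, Hardy's inequality, and absorption of the lower-order terms---and your remarks on why only $Du$ (not $D^2u$) admits an oscillation estimate and on the asymmetric role of $a^{11}$ are on target. However, there is a genuine gap at the sharp-function step. Your pointwise mean-oscillation bound is available only for cylinders $Q_r(X)$ with $r\le\rho y_1$: for larger $r$ the cylinder reaches the boundary, where Assumption~A or~A$'$ gives no control on the oscillation of $a^{ij}$, yet the weighted Fefferman--Stein theorem requires a mean-oscillation bound over \emph{all} parabolic cylinders. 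The paper resolves this by a localization performed \emph{before} any sharp-function argument: one first proves the $\fH^2_{p,\theta}$ a priori estimate for a solution compactly supported in a single cylinder $Q_R(h)$ with $R<\rho h$ (Proposition~\ref{prop5.3}, built on Lemma~\ref{lem5.2}). In that setting arbitrary $r$ is handled by two cases: when $\kappa r$ is small relative to $R$ the coefficients are frozen on $Q_{\kappa r}^+(Y)$, and when $\kappa r$ is comparable to or larger than $R$ they are frozen over the entire support cylinder $Q_R(h)$, using the measure comparison $\mu(Q^+_{\kappa r}(Y))\ge N(d,\theta,\rho)\,\mu(Q_R(h))$ of~\eqref{eq1.40} to control the weighted averages. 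Only afterwards is the global estimate obtained by summing over a partition of unity $\{\eta_k\}$ whose supports lie in cylinders $Q_{r_k}(X_k)$ with $r_k\le\rho (x_k)_1$ and whose derivatives are small as in~\eqref{eq1230_3}. Without this intermediate compact-support step, the sharp-function argument as you describe it does not get off the ground.

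A secondary issue: you place $\lambda\|Mu\|_{p,\theta}$ on the \emph{right} of the reduction estimate for $\|MD^2u\|_{p,\theta}+\|Mu_t\|_{p,\theta}$, which does not close the loop, since $\lambda\|Mu\|_{p,\theta}$ would then have to be estimated independently. The scaled unweighted estimate from~\cite{MR2833589} (used as in Theorem~\ref{thm1011} and Lemma~\ref{lem5.1}) in fact yields $\lambda\|Mu\|_{p,\theta}$ on the \emph{left}, i.e.\ $\lambda\|Mu\|_{p,\theta}+\|u\|_{\fH^2_{p,\theta}}\le N\|Mf\|_{p,\theta}+N\|Du\|_{p,\theta}$, and this sign matters for the bootstrap to be self-contained. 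Finally, part~(iii) in the paper does not use an $x_1$-cutoff; it moves $b^iD_iu+cu$ to the right-hand side, applies the estimate already proved, invokes the interpolation inequality $\|MDu\|_{p,\theta}\le\varepsilon_3\|MD^2u\|_{p,\theta}+N(\varepsilon_3)\|Mu\|_{p,\theta}$, and then chooses $\lambda_0$ large enough to absorb the resulting $\|Mu\|_{p,\theta}$ term.
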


\begin{theorem}
                            \label{thm2}
Let $T \in (-\infty, \infty]$, $\lambda \ge 0$, $1< p < \infty$, and $\theta\in (d-1, d-1+p)$. Then there exist positive constants $\rho\in (1/2,1)$, $\varepsilon$, and $\varepsilon_1$ depending only on $d$, $\delta$, $p$, and $\theta$ such that under Assumption A ($\rho,\varepsilon$) or Assumption A$'$ ($\rho,\varepsilon$), and the growth condition
\begin{equation}
                                    \label{eq9.34b}
|x_1 b^i|+|x_1\hat b^i|+|x_1^2 c|\le \varepsilon_1,
\end{equation}
the following assertions hold.

(i) Suppose that $u \in \cH_{p,\theta}^{1,\lambda}(-\infty,T)$ satisfies
\begin{equation}
							\label{eq9.22b}
- u_t + D_i(a^{ij}D_j u +b^i u)+\hat b^i D_i u +  c u- \lambda u = D_i g_i +f
\end{equation}
in $\bR_T \times \bR^d_+$,
where $g =(g_1,\ldots,g_d), f \in \bL_{p,\theta}(-\infty,T)$ and $f \equiv 0$ if $\lambda = 0$.
Then
\begin{equation}
							\label{eq9.23b}
\sqrt{\lambda} \| u \|_{p,\theta}
+ \| M^{-1} u \|_{p,\theta} + \| D u \|_{p,\theta}
\le N \|g\|_{p,\theta}+N\lambda^{-1/2} \|f\|_{p,\theta},
\end{equation}
where $\| \cdot \|_{p,\theta} = \| \cdot \|_{\bL_{p,\theta}(-\infty,T)}$ and $N = N(d, \delta, \theta, p)$.

(ii) For $g, f \in \bL_{p,\theta}(-\infty,T)$ such that $f \equiv 0$ if $\lambda = 0$, there exists a unique solution $u \in \cH_{p,\theta}^{1,\lambda}(-\infty,T)$ to the equation \eqref{eq9.22b}.

(iii) If the condition \eqref{eq9.34b} is satisfied only for $x_1 \in (0,\sigma]$ for some $\sigma \in (0,\infty)$, and $|b^i|, |\hat b^i|, |c| \le K$ for $x_1 \in (\sigma,\infty)$,
then there exists a constant $\lambda_0 \ge 0$ depending only on $d$, $\delta$, $p$, $\theta$, and $K$ such that the above two assertions hold true whenever $\lambda \ge \lambda_0$.
\end{theorem}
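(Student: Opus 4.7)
My plan follows the scheme outlined in the introduction. First I would reduce \eqref{eq9.22b} to the principal divergence operator $-u_t+D_i(a^{ij}D_j u)-\lambda u$ by absorbing the lower-order terms into the right-hand side. Under the growth condition \eqref{eq9.34b}, each of $D_i(b^i u)$, $\hat b^i D_i u$, and $cu$ belongs to $M^{-1}\bH_{p,\theta}^{-1}(-\infty,T)$ with norm bounded by $N\varepsilon_1(\|M^{-1}u\|_{p,\theta}+\|Du\|_{p,\theta})$: indeed $x_1 b^i$, $x_1\hat b^i$, and $x_1^2 c$ are pointwise bounded by $\varepsilon_1$, and Remark~5.3 of \cite{MR1708104}, quoted as \eqref{eq0102_5}, lets us rewrite these contributions as $D_i\tilde g_i$ with $\|\tilde g_i\|_{p,\theta}$ obeying the same bound. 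Absorbing $\tilde g_i$ into $g_i$ and choosing $\varepsilon_1$ small enough depending only on $d,\delta,p,\theta$ reduces \eqref{eq9.23b} to the analogous estimate for the principal operator.

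For that principal operator I would follow the approach of \cite{MR2519358}. The mean-oscillation estimates for simple-coefficient equations from Section~\ref{sec04} (coefficients measurable in $(t,x_1)$ only, with $a^{11}$ depending on either $t$ or $x_1$ alone) combine with a freezing/perturbation argument in the transversal variables to yield, under Assumption~A or A$'$ with $\varepsilon$ sufficiently small, a pointwise estimate for the sharp function of the triple $(\sqrt\lambda u, M^{-1}u, Du)$ in terms of a Hardy--Littlewood maximal function of the data $(g,\lambda^{-1/2}f)$, plus a small multiple of the maximal function of $Du$ that gets absorbed. Applying the Fefferman--Stein sharp function theorem and the Hardy--Littlewood maximal function theorem in the weighted measure $\mu_{d+1}$, both available in the required form from \cite{MR2435520} because $\theta\in(d-1,d-1+p)$, then yields \eqref{eq9.23b}. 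Following \cite{MR2519358}, I would estimate only the first-order quantities $(M^{-1}u,Du)$, never $D^2u$, which is unnatural in the divergence setting. Part (ii) then follows by the method of continuity joining the operator to the reference operator $-\partial_t+\Delta-\lambda$, whose solvability in $\cH_{p,\theta}^{1,\lambda}(-\infty,T)$ is a direct consequence of \cite{MR1708104} combined with the splitting $u_t=D_i g_i+\sqrt\lambda f$ built into the norm; density of $C_0^{\infty}((-\infty,T]\times\bR^d_+)$ in $\cH_{p,\theta}^{1,\lambda}(-\infty,T)$ justifies the a priori bound on the dense subclass.

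For part (iii), I would introduce a cutoff $\eta=\eta(x_1)\in C_0^\infty(\bR)$ equal to $1$ for $x_1\le\sigma/2$ and supported in $\{x_1\le\sigma\}$, and analyze $\eta u$ and $(1-\eta)u$ separately. The function $\eta u$ satisfies a divergence equation that, after modifying the irrelevant values of $b^i,\hat b^i,c$ outside $\{x_1\le\sigma\}$ so that \eqref{eq9.34b} holds everywhere, is handled by parts (i)--(ii). On the support of $1-\eta$ the weight $x_1^{\theta-d}$ is locally comparable to a constant, so $\bL_{p,\theta}$ reduces locally to the ordinary $\bL_p$, and the standard $\cH_p^1$ theory for divergence parabolic equations with bounded measurable lower-order coefficients applies once $\lambda$ is large enough to absorb the $K$-sized terms via the $\sqrt\lambda\|u\|_{p,\theta}$ factor; commutator terms from $D\eta$, supported in $\{\sigma/2\le x_1\le\sigma\}$, are likewise absorbed for $\lambda\ge\lambda_0$. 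The main technical obstacle will be the mean-oscillation estimate under Assumption~A$'$, where $a^{11}$ is only measurable in $(\sft,\sfx')$: obtaining it with an $\varepsilon$ independent of $\lambda$, $K$, and $\sigma$ requires a careful boundary/interior decomposition at the scale $r\le\rho y_1$, using the H\"older regularity for solutions of simple-coefficient equations developed in Section~\ref{sec03}.
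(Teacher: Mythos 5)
Your broad plan for parts (i)--(ii) (absorb the lower-order terms using \eqref{eq9.34b} and \eqref{eq0102_5}; then derive an $\bL_{p,\theta}$-estimate for the principal divergence operator by combining mean-oscillation estimates, a partition-of-unity/perturbation scheme, and the weighted Fefferman--Stein and maximal-function theorems) is the right one and matches the paper at the outer level. But there is a concrete gap in the middle: you claim a pointwise sharp-function estimate for $(\sqrt\lambda u, M^{-1}u, Du)$, and this cannot be obtained for the component $D_1 u$ even under Assumption A, not only under A$'$. The coefficient $a^{11}$ is merely measurable in $x_1$, so the object that inherits H\"older regularity from the simple-coefficient equation is $U = a^{11}D_1u$ (or $U=D_1u$ when $a^{11}=a^{11}(t)$), not $D_1 u$ itself; in the frozen-coefficient step, $\bar a^{11}$ depends on the cube, so the natural comparison function $U^C$ is cube-dependent. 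This is why the paper replaces the ordinary Fefferman--Stein theorem by the generalized version of \cite{MR2540989} (Theorem \ref{th081201}) for the $D_1u$-estimate (Lemmas \ref{lem6.2}--\ref{lem6.5}), and why it isolates the special-form operator $-u_t+D_1(a^{11}D_1u)+\Delta_{x'}u-\lambda u$ first (Proposition \ref{prop4.12}). Moreover, after moving the remaining $D_i(a^{ij}D_ju)$, $ij\ne 11$, to the right-hand side, the new data contain $D_1u$ with an $O(1)$ coefficient; the paper makes this contribution small via the anisotropic scaling $x_1\mapsto x_1/\chi$ and the accompanying Lemma \ref{lem6.6} before invoking Lemma \ref{lem6.5}. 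Your proposal does not contain either ingredient, and the final sentence's "careful boundary/interior decomposition" does not substitute for them.

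For part (iii) your cutoff route also has a gap: on $\{x_1\ge\sigma/2\}$ the weight $x_1^{\theta-d}$ is \emph{not} comparable to a constant on the whole unbounded region, so the unweighted $\cH_p^1$ theory does not deliver the $\bL_{p,\theta}$-estimate for $(1-\eta)u$ as claimed; you would need another dyadic decomposition in $x_1$ with overlap control, which essentially recreates the machinery you were trying to avoid. The paper's treatment of (iii) is more direct and avoids any cutoff: it moves the full lower-order part to the data, uses the estimate from part (i), observes that on $\{x_1\le\sigma\}$ the growth condition \eqref{eq9.34b} makes the contribution absorbable and on $\{x_1>\sigma\}$ the contribution is bounded by $N_3((1+\lambda^{-1/2})\|u\|_{p,\theta}+\lambda^{-1/2}\|Du\|_{p,\theta})$, and then chooses $\lambda_0$ large. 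You should replace the cutoff argument by this absorption argument.
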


For divergence type elliptic equations, we denote
$u \in \cW_{p,\theta}^{1,\lambda} \equiv \cW_{p,\theta}^{1,\lambda}(\bR^d_+)$
if $
\sqrt \lambda u,M^{-1} u,Du \in L_{p,\theta}$
with the norm
$$
\|u\|_{\cW_{p,\theta}^{1,\lambda}}
=\sqrt\lambda\|u\|_{p,\theta}+\|M^{-1}u\|_{p,\theta}+\|Du\|_{p,\theta}.
$$

We impose the following regularity assumption on $a^{ij}$ for elliptic equations.
\begin{AssumpA''}
We have
$$
\sum_{i,j=1}(a^{ij})_{\rho}^{\sfx',\#} \le \varepsilon.
$$
\end{AssumpA''}	

By adapting, for example, the proof of Theorem 2.6 in \cite{MR2304157} to the results above for parabolic equations, we obtain the following theorems for elliptic equations.

\begin{theorem}
Let $\lambda \ge 0$, $1< p < \infty$, and $\theta\in (d-1,d-1+p)$. Then there exist positive constants $\rho\in (1/2,1)$, $\varepsilon$, and $\varepsilon_1$ depending only on $d$, $\delta$, $p$, and $\theta$ such that under Assumption A$''$ ($\rho,\varepsilon$) and the growth condition
\begin{equation}
                                   \label{eq9.341}
|x_1 b^i|+|x_1^2 c|\le \varepsilon_1,
\end{equation}
the following assertions hold.

(i) Suppose that $u \in M H_{p,\theta}^2$ satisfies
\begin{equation}
							\label{eq9.221}
a^{ij} D_{ij} u +b^i D_i u+ c u- \lambda u = f
\end{equation}
in $\bR^d_+$,
where $f \in M^{-1}L_{p,\theta}$.
Then
\begin{equation*}
\lambda \| M u \|_{p,\theta}
+ \| M^{-1} u \|_{p,\theta} + \| D u \|_{p,\theta}
+ \| M D^2 u \|_{p,\theta}
\le N \|Mf\|_{p,\theta},
\end{equation*}
where $\| \cdot \|_{p,\theta} = \| \cdot \|_{L_{p,\theta}}$ and $N = N(d, \delta, \theta, p)$.

(ii) For any $f \in M^{-1}L_{p,\theta}$, there exists a unique solution $u \in M H_{p,\theta}^2$ to the equation \eqref{eq9.221}.

(iii) If the condition \eqref{eq9.341} is satisfied only for $x_1 \in (0,\sigma]$ for some $\sigma \in (0,\infty)$, and $|b^i|, |c| \le K$ for $x_1 \in (\sigma,\infty)$,
then there exists a constant $\lambda_0 \ge 0$ depending only on $d$, $\delta$, $p$, $\theta$, and $K$ such that the above two assertions hold true whenever $\lambda \ge \lambda_0$.
\end{theorem}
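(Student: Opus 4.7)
The plan is to derive this elliptic theorem from the non-divergence parabolic Theorem~\ref{thm1} by regarding $a^{ij}(x)$ as $t$-independent parabolic coefficients, adapting the reduction argument of Theorem 2.6 in \cite{MR2304157}. Since in the $t$-independent case $(a^{11})^{(\sft,\sfx'),\#}_{\rho}$ coincides with $(a^{11})^{\sfx',\#}_{\rho}$, Assumption A$''$~$(\rho,\varepsilon)$ on the elliptic coefficients immediately yields Assumption A$'$~$(\rho,\varepsilon)$ for their parabolic extension, and \eqref{eq9.341} coincides with \eqref{eq9.34}, so Theorem~\ref{thm1} is applicable (taken, say, with $T=\infty$).

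For the a priori estimate in (i), I would first take $u \in C_0^{\infty}(\bR^d_+)$ so that $Mu \in L_{p,\theta}$ automatically. Fix a nontrivial $\eta \in C_0^{\infty}(\bR)$ and for $R \ge 1$ set $\eta_R(t) := \eta(t/R)$; then $\|\eta_R'\|_{L_p(\bR)}/\|\eta_R\|_{L_p(\bR)} = R^{-1}\|\eta'\|_{L_p}/\|\eta\|_{L_p}$, which vanishes as $R \to \infty$. The product $v(t,x) := \eta_R(t)\, u(x)$ lies in $\fH_{p,\theta}^2(-\infty,\infty)$ and satisfies
\begin{equation*}
- v_t + a^{ij}D_{ij} v + b^i D_i v + c v - \lambda v = \eta_R(t) f(x) - \eta_R'(t) u(x).
\end{equation*}
Applying \eqref{eq9.23} to $v$, every norm on the left separates as $\|\eta_R\|_{L_p(\bR)}$ times a purely spatial norm of $u$, while the right is dominated by $\|\eta_R\|_{L_p}\|Mf\|_{L_{p,\theta}} + \|\eta_R'\|_{L_p}\|Mu\|_{L_{p,\theta}}$. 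Dividing by $\|\eta_R\|_{L_p}$ and sending $R \to \infty$ eliminates the $\|Mu\|$ contribution on the right and the $\|Mv_t\|$ contribution on the left, producing the elliptic estimate for smooth compactly supported $u$. A density argument then promotes it to arbitrary $u \in MH_{p,\theta}^2$.

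For existence in (ii), I would fix $\eta \in C_0^{\infty}(\bR)$ with $\int_\bR \eta\, dt = 1$ and apply Theorem~\ref{thm1}(ii) on $(-\infty,\infty)$ to the parabolic datum $\tilde f(t,x) := \eta(t) f(x)$, producing a unique $\tilde u \in \fH_{p,\theta}^2(-\infty,\infty)$. Integrating the parabolic equation in $t$ over $\bR$, using that the coefficients do not depend on $t$ and that $\tilde u(\pm\infty,\cdot)$ vanishes in the $L_{p,\theta}$-averaged sense (from $\tilde u,\, M\tilde u_t \in \bL_{p,\theta}$), shows that $u(x) := \int_\bR \tilde u(t,x)\, dt$ solves \eqref{eq9.221}, and Minkowski's integral inequality places $u$ in $MH_{p,\theta}^2$. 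Uniqueness follows from the a priori estimate applied to the difference of two solutions. Part (iii) follows from the same reduction coupled with Theorem~\ref{thm1}(iii), which supplies the threshold $\lambda_0$. The chief obstacle I anticipate is justifying the density step in (i) and the integration-in-$t$ step in (ii); both rely on the definition of the weighted spaces from \cite{MR1708104} and careful Fubini-type manipulations, and each must be verified rather than merely asserted.
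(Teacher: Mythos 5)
Your reduction of Assumption A$''$ to Assumption A$'$ for $t$-independent coefficients is correct, and your cutoff argument for the a priori estimate in (i) --- multiply by $\eta_R(t)$, apply \eqref{eq9.23}, divide by $\|\eta_R\|_{L_p(\bR)}$, and send $R\to\infty$ --- is exactly the standard passage from parabolic to elliptic and matches the approach of Theorem 2.6 in \cite{MR2304157}, which the paper invokes. One technical caveat: for $u\in MH^2_{p,\theta}$ the quantity $\|Mu\|_{p,\theta}$ that enters via the $\eta_R'u$ term is not automatically finite; as in the proof of Theorem \ref{thm1011}, you should first note that $\lambda u = a^{ij}D_{ij}u + b^iD_iu + cu - f \in M^{-1}L_{p,\theta}$ gives $\lambda Mu\in L_{p,\theta}$ when $\lambda>0$, and choose $C_0^\infty(\bR^d_+)$ approximants converging in that norm as well.

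Your existence argument in (ii), however, has a genuine gap. From $\tilde u \in \fH^2_{p,\theta}(-\infty,\infty)$ one only has $L_p$-in-$t$ control: $\int_\bR\|M\tilde u_t(t,\cdot)\|^p_{L_{p,\theta}}\,dt<\infty$ yields at best $\liminf_{|t|\to\infty}\|\tilde u(t,\cdot)\|_{L_{p,\theta}}=0$ along a sequence, not $\int_\bR\tilde u_t\,dt=0$; and Minkowski's inequality would require $\int_\bR\|\tilde u(t,\cdot)\|_{L_{p,\theta}}\,dt<\infty$, which does not follow from the $L_p$ bound. For $\lambda=0$ there is no decay of $\tilde u$ for $t$ outside the support of $\eta$, and $\int_\bR \tilde u\,dt$ generically diverges; even for $\lambda>0$ the exponential decay of $\tilde u(t,\cdot)$ as $t\to+\infty$ that would make the argument work is a separate estimate you have not established. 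The simpler and standard route, used in \cite{MR2304157}, is to pair the a priori estimate (hence uniqueness) with the method of continuity, deforming $(a^{ij},b^i,c)$ to $(\delta^{ij},0,0)$ --- the assumptions hold with uniform constants along the path --- and invoking the known solvability of $\Delta u-\lambda u = f$ in $MH^2_{p,\theta}$ from \cite{MR1708104}. Part (iii) then follows as in Theorem \ref{thm1}(iii).
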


\begin{theorem}
Let $\lambda \ge 0$, $1< p < \infty$, and $\theta\in (d-1,d-1+p)$. Then there exist positive constants $\rho\in (1/2,1)$, $\varepsilon$, and $\varepsilon_1$ depending only on $d$, $\delta$, $p$, and $\theta$ such that under Assumption A$''$ ($\rho,\varepsilon$) and the growth condition
\begin{equation}
                                    \label{eq9.34b1}
|x_1 b^i|+|x_1\hat b^i|+|x_1^2 c|\le \varepsilon_1,
\end{equation}
the following assertions hold.

(i) Suppose that $u \in \cW_{p,\theta}^{1,\lambda}$ satisfies
\begin{equation}
							\label{eq9.22b1}
D_i(a^{ij}D_j u +b^i u)+\hat b^i D_i u +  c u- \lambda u = D_i g_i + f
\end{equation}
in $\bR^d_+$,
where $g=(g_1,\ldots,g_d), f \in L_{p,\theta}$ and $f \equiv 0$ if $\lambda = 0$.
Then
\begin{equation*}
\sqrt{\lambda} \| u \|_{p,\theta}
+ \| M^{-1} u \|_{p,\theta} + \| D u \|_{p,\theta}
\le N \|g\|_{p,\theta}+N\lambda^{-1/2} \|f\|_{p,\theta},
\end{equation*}
where $\| \cdot \|_{p,\theta} = \| \cdot \|_{L_{p,\theta}}$ and $N = N(d, \delta, \theta, p)$.

(ii) For $g, f \in L_{p,\theta}$ such that $f \equiv 0$ if $\lambda = 0$, there exists a unique solution $u \in \cW_{p,\theta}^{1,\lambda}$ to the equation \eqref{eq9.22b1}.

(iii) If the condition \eqref{eq9.34b1} is satisfied only for $x_1 \in (0,\sigma]$ for some $\sigma \in (0,\infty)$, and $|b^i|, |\hat b^i|, |c| \le K$ for $x_1 \in (\sigma,\infty)$,
then there exists a constant $\lambda_0 \ge 0$ depending only on $d$, $\delta$, $p$, $\theta$, and $K$ such that the above two assertions hold true whenever $\lambda \ge \lambda_0$.
\end{theorem}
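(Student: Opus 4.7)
The plan is to deduce this theorem from the parabolic Theorem \ref{thm2} via the \emph{time-independent lift} device, as the paper itself suggests by invoking the adaptation of Theorem~2.6 in \cite{MR2304157}. The device is to convert a candidate elliptic solution $u(x)$ into a parabolic object $v(t,x) = \phi(t)u(x)$ for a compactly supported cutoff $\phi \in C_c^\infty(\bR)$. Since the coefficients $a^{ij}$, $b^i$, $\hat b^i$, $c$ are $t$-independent, Assumption A$''(\rho,\varepsilon)$ automatically upgrades to Assumption A$(\rho,\varepsilon)$ for $v$, and the growth condition \eqref{eq9.34b1} coincides with \eqref{eq9.34b} in this setting.

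For the a priori estimate (i) at $\lambda > 0$, normalize $\|\phi\|_{L_p(\bR)} = 1$. A direct computation shows that $v=\phi u$ satisfies the parabolic equation \eqref{eq9.22b} on $\bR \times \bR^d_+$ with data
\[
\tilde g_i = \phi(t)\, g_i(x), \qquad \tilde f = \phi(t)\, f(x) - \phi'(t)\, u(x).
\]
Applying Theorem \ref{thm2}(i) at $T = \infty$ and using Fubini to factor the $t$-integral yields
\[
\sqrt\lambda\, \|u\|_{L_{p,\theta}} + \|M^{-1}u\|_{L_{p,\theta}} + \|Du\|_{L_{p,\theta}} \le N\|g\|_{L_{p,\theta}} + N\lambda^{-1/2}\|f\|_{L_{p,\theta}} + N\lambda^{-1/2}\|\phi'\|_{L_p(\bR)}\|u\|_{L_{p,\theta}}.
\]
Replacing $\phi$ by $\phi_R(\cdot) = R^{-1/p}\phi(\cdot/R)$ preserves $\|\phi_R\|_{L_p(\bR)} = 1$ but forces $\|\phi_R'\|_{L_p(\bR)} = R^{-1}\|\phi'\|_{L_p(\bR)} \to 0$ as $R \to \infty$; choosing $R$ large enough absorbs the last term into the left-hand side and yields the desired bound.

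For existence (ii), I would apply the method of continuity along the family $L_\tau - \lambda = (1-\tau)(\Delta - \lambda) + \tau(L - \lambda)$, $\tau \in [0,1]$. The uniform a priori estimate just established makes the set of $\tau$ for which $L_\tau - \lambda$ is a homeomorphism from $\cW_{p,\theta}^{1,\lambda}$ to the data space both open and closed in $[0,1]$; the base case $\tau = 0$ (the shifted Laplacian on $\bR^d_+$) is handled by the weighted theory of \cite{MR1708104}. Part (iii) reduces to (i)--(ii) by splitting $b^i = b^i\mathbf{1}_{\{x_1 \le \sigma\}} + b^i\mathbf{1}_{\{x_1 > \sigma\}}$ (and analogously for $\hat b^i$ and $c$): the first piece obeys the $\varepsilon_1$-growth condition and is handled by (i)--(ii), while the second is bounded by $K$ and contributes a term of order $N K(\|u\|_{L_{p,\theta}} + \|Du\|_{L_{p,\theta}})$ on the right that is absorbed by the $\sqrt\lambda\|u\|$ and $\|Du\|$ terms on the left once $\lambda \ge \lambda_0$ is large enough.

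The main obstacle is the $\lambda = 0$ case. Here $f \equiv 0$ by hypothesis, but the lift $v = \phi u$ still generates a parabolic source $\phi'(t)u(x)$ that is incompatible with the $\lambda=0$ parabolic theorem, and furthermore $\cW_{p,\theta}^{1,0}$ does not control $\|u\|_{L_{p,\theta}}$, so the absorption trick above breaks down. I would address this by applying the $\lambda > 0$ theory to the auxiliary problem $(L - \lambda_1)u = D_i g_i - \lambda_1 u$ for small $\lambda_1 > 0$; the $\|M^{-1}u\|_{L_{p,\theta}}$ and $\|Du\|_{L_{p,\theta}}$ bounds from (i) carry no $\lambda_1^{-1/2}$ factor and so are uniform, enabling extraction of a weak limit as $\lambda_1 \searrow 0$. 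Identifying this limit as a solution in $\cW_{p,\theta}^{1,0}$ and establishing its uniqueness are the delicate pieces of the adaptation.
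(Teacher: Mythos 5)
Your overall strategy---lifting the elliptic problem to a parabolic one via $v(t,x)=\phi(t)u(x)$ and invoking Theorem \ref{thm2}---is exactly what the paper intends by ``adapting the proof of Theorem 2.6 of \cite{MR2304157},'' and the $\lambda>0$ computation is correct: the parabolic estimate for $v$ produces the extra term $N\lambda^{-1/2}\|\phi'\|_{L_p(\bR)}\|u\|_{L_{p,\theta}}$, and stretching $\phi$ to $\phi_R$ drives this to zero. Two smaller corrections. First, for time-independent coefficients Assumption A$''(\rho,\varepsilon)$ upgrades to Assumption A$'(\rho,\varepsilon)$, not Assumption A: Assumption A controls the full $\sfx$-oscillation of $a^{11}$ including the $x_1$-direction, which A$''$ does not give, whereas Assumption A$'$ only asks for the $(\sft,\sfx')$-oscillation, which for $t$-independent data collapses to the $\sfx'$-oscillation that A$''$ controls. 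Since Theorem \ref{thm2} holds under either A or A$'$, this is cosmetic but should be fixed. Second, your part~(iii) argument says the term $NK\|Du\|$ is ``absorbed by $\|Du\|$ on the left once $\lambda\ge\lambda_0$''---that cannot work, because $\|Du\|$ carries no $\lambda$-factor. The mechanism (as in the paper's Theorem \ref{thm2}(iii) proof) is to move $\hat b^i D_i u$ and $cu$ into the $f$-slot, which supplies a $\lambda^{-1/2}$ factor so the contribution reads $NK\lambda^{-1/2}\|Du\|$ and is then absorbed for $\lambda$ large; only $D_i(b^iu)$ goes into the $g$-slot and is handled by $\sqrt\lambda\|u\|$.

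The genuine gap is the $\lambda=0$ case. You flag it, but your resolution (solving ``$(L-\lambda_1)u=D_ig_i-\lambda_1u$,'' a tautological rewrite, and extracting a weak limit) at best addresses existence and leaves both the a priori estimate and uniqueness open. The missing ingredient is a density reduction: for $u\in C_0^\infty(\bR^d_+)$ one has $Mu\in L_{p,\theta}\subset H_{p,\theta}^{-1}$, so the lifted function $v=\phi u$ satisfies $v_t=\phi'(t)u(x)\in M^{-1}\bH_{p,\theta}^{-1}(-\infty,\infty)$ and lies in $\cH_{p,\theta}^{1,0}(-\infty,\infty)$; Theorem \ref{thm2} at $\lambda=0$ then applies with the error $\phi'u$ placed in the $D_ig_i$ slot, contributing $N\|\phi'\|_{L_p(\bR)}\|Mu\|_{H_{p,\theta}^{-1}}$, which vanishes upon stretching $\phi$ exactly as in the $\lambda>0$ case. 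The estimate then extends to all $u\in\cW_{p,\theta}^{1,0}$ by density of $C_0^\infty(\bR^d_+)$ in $\cW_{p,\theta}^{1,\lambda}$, and uniqueness is immediate. Without this reduction the lift genuinely breaks down: $M^{-1}u\in L_{p,\theta}$ does not imply $Mu\in H_{p,\theta}^{-1}$, and the Hardy inequality compatible with $\theta>d-1$ needs $Mu\in L_{p,\theta}$, which is exactly what a general $u\in\cW_{p,\theta}^{1,0}$ lacks.
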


\section{$L_p$-estimates for equations with simple coefficients}
							\label{sec03}

In this section we consider parabolic equations with simple coefficients.
Throughout the section the following assumption is enforced.

\begin{assumption}
                    \label{assum3.1}
$$
a^{ij}=a^{ij}(t,x_1)
\quad
\text{for}
\quad
(i,j) \ne (1,1).
$$
$$
a^{11} = a^{11}(t)
\quad
\text{or}
\quad
a^{11} = a^{11}(x_1).
$$
\end{assumption}

\begin{lemma}
							\label{lem0920_1}
Let $1<p<\infty$, $1-p < c < 1$, and $v \in C_0^{\infty}(\bR^d_+)$.
We have
$$
\int_{\bR^d_+} |v|^p \, y_1^{c-2} \, dy
\le \frac{p^2}{(1-c)^2} \int_{\bR^d_+} |v|^{p-2} \left(D_1 v\right)^2 \, y_1^c \, dy.
$$
\end{lemma}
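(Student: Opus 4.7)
The plan is to derive the inequality as a one-dimensional weighted Hardy inequality by a single integration by parts in $y_1$ followed by Cauchy--Schwarz. The algebraic key is the identity
\begin{equation*}
y_1^{c-2} = \frac{1}{c-1}\,\partial_{y_1}\bigl(y_1^{c-1}\bigr),
\end{equation*}
which is available because $c < 1$. Since $v \in C_0^\infty(\bR^d_+)$, its support is a compact subset of the open half space, so no boundary term appears at $y_1 = 0$ or at infinity, and the weights $y_1^{c-2}$, $y_1^{c-1}$, $y_1^c$ are all bounded on the support of $v$.

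Concretely, I would first integrate by parts in $y_1$ against $|v|^p$, using $D_1(|v|^p) = p\,|v|^{p-2} v\, D_1 v$ (valid in the classical sense for $p > 1$), to obtain
\begin{equation*}
\int_{\bR^d_+} |v|^p\, y_1^{c-2}\, dy = \frac{p}{1-c}\int_{\bR^d_+} |v|^{p-2} v\,(D_1 v)\, y_1^{c-1}\, dy.
\end{equation*}
Second, I would bound this right-hand side (after taking absolute values) by Cauchy--Schwarz, splitting the integrand as
\begin{equation*}
|v|^{p-1}|D_1 v|\, y_1^{c-1} = \bigl(|v|^{p/2} y_1^{(c-2)/2}\bigr)\,\bigl(|v|^{(p-2)/2}|D_1 v|\, y_1^{c/2}\bigr),
\end{equation*}
which works because $p/2 + (p-2)/2 = p-1$ and $(c-2)/2 + c/2 = c-1$. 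This yields
\begin{equation*}
\int_{\bR^d_+} |v|^p y_1^{c-2}\, dy \le \frac{p}{1-c}\left(\int_{\bR^d_+} |v|^p y_1^{c-2}\, dy\right)^{1/2}\left(\int_{\bR^d_+} |v|^{p-2}(D_1 v)^2 y_1^c\, dy\right)^{1/2},
\end{equation*}
and dividing by the first factor on the right (which is finite by compact support) and squaring gives the claim.

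The main technical obstacle is the possible singularity of $|v|^{p-2}$ at zeros of $v$ when $1 < p < 2$: this both undermines the classical computation of $D_1(|v|^p)$ and makes the finiteness of the right-hand side nontrivial. Near a transverse zero of $v$, however, $|v|^{p-2}(D_1 v)^2 \sim \mathrm{dist}^{\,p-2}$, which is locally integrable precisely because $p > 1$. To justify the calculation rigorously I would replace $|v|^p$ by the smooth surrogate $\tilde\psi_\varepsilon(v) = (v^2 + \varepsilon)^{p/2} - \varepsilon^{p/2}$ (still compactly supported in $\bR^d_+$), run the same integration by parts and Cauchy--Schwarz, and pass to the limit $\varepsilon \to 0^+$ by dominated convergence when $p \ge 2$ and by monotone convergence on the $(D_1 v)^2$ side when $p < 2$. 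The lower bound $c > 1 - p$ does not enter the proof for $v$ of compact support, but is the natural condition under which the inequality extends by density to functions vanishing only at $\{y_1 = 0\}$.
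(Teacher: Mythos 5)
Your proposal is correct and is essentially the paper's argument: the paper cites the one-dimensional Hardy inequality $\int_0^\infty |u|^2 r^{c-2}\,dr \le \frac{4}{(1-c)^2}\int_0^\infty |u'|^2 r^c\,dr$ and applies it with $u(r) = |v(r,x')|^{p/2}$, while you have simply unrolled the standard proof of that one-dimensional inequality (integration by parts against $\partial_{y_1}(y_1^{c-1})/(c-1)$, then Cauchy--Schwarz) directly on $|v|^p$, which is the same calculation in thin disguise. Your final paragraph on regularizing $|v|^p$ for $1<p<2$ addresses a regularity point that the paper's citation-based proof also implicitly needs (since $|v|^{p/2}$ is not $C^1$ at zeros of $v$) but leaves unspoken.
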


\begin{proof}
This is Hardy's inequality.
Indeed, one can find, for example, in \cite{MR802206}
$$
\int_0^\infty |u(r)|^2 r^{c-2} \, dr \le \frac{4}{(1-c)^2}
\int_0^\infty |u'(r)|^2 r^c \, dr
$$
if $c < 1$ and $u(r)$ is a sufficiently smooth function defined in $[0,\infty)$ satisfying $u(0) = 0$.
Then using this inequality with $u(r) = |v(r,x')|^{p/2}$ and integrating both sides with respect to $x' \in \bR^{d-1}$, we get the desired inequality.
One can find the same inequality in the proof of Lemma 6.1 in \cite{MR1708104}.
\end{proof}

\subsection{Non-divergence type equations}

We start with estimating the weighted norm of $M^{-1}u$ by generalizing Corollary 6.2 of \cite{MR1708104}, where the result was proved for equations with constant coefficients.

\begin{proposition}
							\label{prop0925_1}
Let $T \in (-\infty, \infty]$, $\lambda \ge 0$, $1< p < \infty$, $1-p < c < 1$, and $u \in C_0^{\infty}((-\infty,T] \times \bR^d_+)$ satisfy
\begin{equation}
							\label{eq0919_1}
- u_t + a^{ij} D_{ij} u - \lambda u = f
\end{equation}
in $\bR_T \times \bR^d_+$,
where
$$
\int_{\bR_T \times \bR^d_+} |f|^p \, x_1^{c-2+2p} \, dx \, dt < \infty.
$$
Then
\begin{equation}
							\label{eq0925_1}
\int_{\bR_T \times \bR^d_+} |u|^p \, x_1^{c-2} \, dx \, dt
\le N \int_{\bR_T \times \bR^d_+} |f|^p \, x_1^{c-2+2p} \, dx \, dt,
\end{equation}
where $N = N(d, \delta, c, p)$.
\end{proposition}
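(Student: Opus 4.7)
My plan is a weighted energy estimate based on a carefully chosen test function, in the spirit of the constant-coefficient argument in Corollary 6.2 of \cite{MR1708104}. I would multiply \eqref{eq0919_1} by $-|u|^{p-2}u\,x_1^c$ and integrate over $\bR_T\times\bR^d_+$. Because $u\in C_0^\infty((-\infty,T]\times\bR^d_+)$, all integrals converge absolutely and boundary contributions at $x_1=0$, $|x|\to\infty$, and $t\to-\infty$ vanish. The time-derivative term yields $\frac{1}{p}\int_{\bR^d_+}|u(T,x)|^p x_1^c\,dx\ge 0$ and the $-\lambda u$ term yields $\lambda\int|u|^p x_1^c\ge 0$; both are discarded from the coercive side.

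The heart of the argument is to estimate $-\int a^{ij}D_{ij}u\cdot|u|^{p-2}u\,x_1^c$ from below. I would perform integration by parts term-by-term, using Assumption \ref{assum3.1} to pick, for each pair $(i,j)$, a coordinate direction in which $a^{ij}$ is constant and transfer a derivative off $u$ in that direction: for $j>1$ in $x_j$, and for $i>1,j=1$ in $x_i$ after exploiting the symmetry $a^{ij}=a^{ji}$. The only delicate pair is $(1,1)$: if $a^{11}=a^{11}(t)$, integration by parts in $x_1$ is immediate; if $a^{11}=a^{11}(x_1)$ is only measurable, I would mollify $a^{11}$ in $x_1$ to obtain smooth elliptic coefficients $a^{11}_\varepsilon$, derive the estimate for the regularized equation, and pass to the limit using uniform constants and continuity of each integral under $a^{11}_\varepsilon\to a^{11}$ tested against the fixed $C_0^\infty$ function $u$. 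After a further integration by parts applied to $\tfrac{c}{p}\int a^{11}D_1(|u|^p)x_1^{c-1}$, the computation yields
\[
-\int a^{ij}D_{ij}u\cdot|u|^{p-2}u\,x_1^c\,dxdt \;=\; (p-1)\int a^{ij}D_i u\,D_j u\,|u|^{p-2}x_1^c\,dxdt \;+\; \frac{c(1-c)}{p}\int a^{11}|u|^p x_1^{c-2}\,dxdt.
\]

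Combining ellipticity $a^{ij}D_i u\,D_j u\ge\delta(D_1 u)^2$ with Lemma \ref{lem0920_1} applied to $v=u$ and balancing against the (possibly signed) lower-order contribution---making use of the elementary identity $(1-c)(p-1+c)>0$ on the entire range $c\in(1-p,1)$---I expect to obtain
\[
C(d,\delta,c,p)\int|u|^p x_1^{c-2}\,dxdt \;\le\; \Bigl|\int f\,|u|^{p-2}u\,x_1^c\,dxdt\Bigr|
\]
for a strictly positive constant $C$. The right-hand side is then controlled by H\"older's inequality with the weight split $x_1^c = x_1^{(c+2p-2)/p}\cdot x_1^{(c-2)(p-1)/p}$, producing the bound $(\int|f|^p x_1^{c-2+2p})^{1/p}(\int|u|^p x_1^{c-2})^{(p-1)/p}$, after which absorbing the $u$-factor into the left-hand side yields \eqref{eq0925_1}.

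The principal obstacle is the rigorous handling of the case $a^{11}=a^{11}(x_1)$ with merely measurable $a^{11}$, where a formal integration by parts in $x_1$ would invoke the distributional derivative $D_1 a^{11}$; the mollification-and-limit procedure depends on the stability of each integral in the a priori estimate under $a^{11}_\varepsilon\to a^{11}$. A subtler point is verifying strict positivity of $C(d,\delta,c,p)$ throughout $(1-p,1)$ when $c(1-c)<0$: the worst-case $a^{11}$ bound in the lower-order term can degrade the constant for $c$ close to $1-p$ when $\delta$ is small, so the proof may need a sharper quadratic-form inequality (for example a Schur-complement variant of the ellipticity bound) or a refined test function of the form $|u|^{p-2}u\,h(x_1)$ with $h$ chosen as an antiderivative of $cx_1^{c-1}/a^{11}$, so that the lower-order term emerges without the $a^{11}$-weight and positivity is preserved uniformly on the full range of $c$.
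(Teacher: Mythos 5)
Your proposed energy estimate is the right framework and matches the paper's Case~1 ($a^{11}=a^{11}(t)$) up to the point where you invoke ``ellipticity $a^{ij}D_iu\,D_ju\ge\delta(D_1u)^2$ with Lemma~\ref{lem0920_1}.'' That step, taken as stated, is where the argument breaks down: it yields a lower bound for $I(t)$ of the form $\bigl[\tfrac{(p-1)\delta(1-c)^2}{p^2}+\tfrac{c(1-c)}{p}\,a^{11}\bigr]\int|u|^p x_1^{c-2}$, and since $a^{11}\le\delta^{-1}$, when $c\in(1-p,0)$ the negative term can dominate for small $\delta$. You correctly flag this in your last paragraph, but the ``Schur-complement variant'' you gesture at is not optional --- it is the whole point, and the paper implements it concretely: for each $t$, the measure-preserving change of variables $y_1=x_1$, $y_i=-\int_0^{x_1}\tfrac{a^{i1}(t,r)}{a^{11}(t)}\,dr+x_i$ block-diagonalizes the quadratic form so that $\tilde a^{11}=a^{11}(t)$ and $\tilde a^{1l}=\tilde a^{k1}=0$ for $k,l\ge2$. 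Then $a^{11}(t)$ factors out of \emph{both} the $(D_1v)^2$ term and the $\tfrac{c(1-c)}{p}$ term, Hardy's inequality produces the constant $\tfrac{(1-c)(p-1+c)}{p^2}>0$ valid on the entire range $c\in(1-p,1)$, and the remaining $(k,l\ge2)$ block contributes a nonnegative amount by ellipticity. Without this diagonalization, you cannot get a constant that is uniform in $\delta$ and $c$.

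The second gap is the treatment of $a^{11}=a^{11}(x_1)$. Your primary route (mollify $a^{11}$, derive the estimate for $a^{11}_\varepsilon$ with the naive multiplier $-|u|^{p-2}u\,x_1^c$, and pass to the limit) does not have uniform constants: the integration by parts in $x_1$ produces a term $\int (D_1 a^{11}_\varepsilon)\,|u|^{p-2}u\,D_1u\,x_1^c$ whose size depends on $\|D_1 a^{11}_\varepsilon\|_\infty$, which blows up as $\varepsilon\to0$ for generic measurable $a^{11}$. The paper sidesteps mollification entirely by changing the multiplier to $-|u|^{p-2}u\,x_1^c/a^{11}(x_1)$: this makes the principal term $-\int D_{11}u\,|u|^{p-2}u\,x_1^c$ with no $a^{11}$ (hence no $D_1a^{11}$) in sight, so the integration by parts is clean for merely measurable $a^{11}$, and the lower-order term $\tfrac{c(1-c)}{p}\int|u|^p x_1^{c-2}$ appears with coefficient exactly $1$. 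Your alternative suggestion of an antiderivative $h$ with $h'=cx_1^{c-1}/a^{11}$ is close in spirit but still leaves $D_1a^{11}$ in the by-parts identity; the simple quotient multiplier $x_1^c/a^{11}(x_1)$ (not an antiderivative) is what avoids it. Combined with the same change of variables as above (now with $a^{11}(r)$ in the denominator of $y_i$), the proof goes through without mollification. In short: both of the ``possible obstacles'' you list at the end are real, and the paper resolves them with the anisotropic linear change of variables and the $a^{11}$-weighted multiplier rather than with mollification.
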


\begin{proof}
{\bf Case 1}: $a^{11} = a^{11}(t)$.
Multiply both sides of \eqref{eq0919_1} by $- |u|^{p-2} u \, x_1^c$ and integrate over $(-\infty, T)\times \bR^d_+$.
Then we have
\begin{align}
							\label{eq0920_1}
&\int_{\bR_T\times \bR^d_+} u_t |u|^{p-2} u \, x_1^c \, dx \, dt
- \int_{\bR_T \times \bR^d_+} a^{11}(t) |u|^{p-2} u D_{11} u \, x_1^c \, dx \, dt
\nonumber\\
&\,\,- \sum_{(i,j) \ne (1,1)} \int_{\bR_T \times \bR^d_+} a^{ij} |u|^{p-2} u D_{ij} u \, x_1^c \, dx \, dt + \lambda \int_{\bR_T \times \bR^d_+} |u|^p \, x_1^c \, dx \, dt
\nonumber\\
&= - \int_{\bR_T \times \bR^d_+} f |u|^{p-2} u \, x_1^c \, dx \, dt.
\end{align}
Note that by integration by parts
\begin{align*}
\int_{\bR_T\times \bR^d_+} u_t |u|^{p-2} u \, x_1^c \, dx \, dt
&= \int_{\bR_T\times \bR^d_+} \frac{1}{p} D_t \left( |u|^p \right) \, x_1^c \, dx \, dt
\\
&= \int_{\bR^d_+} \frac{1}{p} |u|^p(T,x) \, x_1^c \, dx
\end{align*}
and
\begin{align*}
&-\int_{\bR_T \times \bR^d_+} a^{11}(t) |u|^{p-2} u D_{11} u \, x_1^c \, dx \, dt
\\
&=(p-1) \int_{\bR_T \times \bR^d_+} a^{11}(t) D_1 u D_1u |u|^{p-2} \, x_1^c \, dx \, dt
\\
&\quad+ c \int_{\bR_T \times \bR^d_+} a^{11}(t) |u|^{p-2} u D_1u \, x_1^{c-1} \, dx \, dt,
\end{align*}
where the last term is equal to
$$
\frac{c(1-c)}{p}\int_{\bR_T \times \bR^d_+} a^{11}(t) |u|^p \, x_1^{c-2} \, dx \, dt
$$
due to $\frac{1}{p} D_1\left(|u|^p \right) = |u|^{p-2} u D_1u$ and integration by parts again in $x_1$.
For $(i,j) \ne (1,1)$,
$$
- \int_{\bR_T \times \bR^d_+} a^{ij} |u|^{p-2} u D_{ij} u \, x_1^c \, dx \, dt
= (p-1) \int_{\bR_T \times \bR^d_+} a^{ij} D_i u D_j u |u|^{p-2} \, x_1^c \, dx \, dt.
$$
Thus from \eqref{eq0920_1} combined with the above calculations we have
\begin{align}
							\label{eq0920_2}
&\frac{1}{p} \int_{\bR_T \times \bR^d_+} |u|^p(T,x) \, x_1^c \, dx \, dt
+ \int_{\bR_T} I(t) \, dt
+ \lambda \int_{\bR_T \times \bR^d_+} |u|^p \, x_1^c \, dx \, dt
\nonumber\\
&= - \int_{\bR_T \times \bR^d_+} f |u|^{p-2} u \, x_1^c \, dx \, dt,
\end{align}
where
$$
I(t) = (p-1) \int_{\bR^d_+} a^{ij} D_i u D_j u |u|^{p-2} \, x_1^c \, dx
+ \frac{c(1-c)}{p}\int_{\bR^d_+} a^{11}(t) |u|^p \, x_1^{c-2} \, dx.
$$

Now, for each $t \in (-\infty, T]$, we consider a change of variables $y=y(t,x)$, where
$$
y_1 = x_1,
\quad
y_i = - \int_0^{x_1} \frac{a^{i1}(t,r)}{a^{11}(t)} \, dr + x_i,
\quad
i = 2, \ldots, d.
$$
Then
$y(0)=0$,
$\partial y_i/\partial x_i=1$,
$$
\partial y_i/\partial x_1 =-\frac{a^{i1}(t,x_1)}{a^{11}(t)},
\quad
i=2, \ldots, d,
$$
and
$$
\partial y_i/\partial x_j =0,
\quad
i \ne j, \,\, j = 2, \ldots, d.
$$
This is a one to one Lipschitz map from $\bR^d_+$ to $\bR^d_+$ and its Jacobian is equal to $1$.
Set
$v(t,y) = u(t,x)$.
Then, for each $t \in (-\infty, T]$,
$$
I(t)
= (p-1) \int_{\bR^d_+} \tilde{a}^{kl} D_k v D_l v |v|^{p-2} \, y_1^c \, dy
+ \frac{c(1-c)}{p} \int_{\bR^d_+} \tilde{a}^{11}(t) |v|^p \, y_1^{c-2} \, dy,
$$
where
$$
\tilde{a}^{kl} = \sum_{i,j=1}^d a^{ij}\frac{\partial y_k}{\partial x_i} \frac{\partial y_l}{\partial x_j}.
$$
By the definition of $y=y(t,x)$ we observe that
$$
\tilde{a}^{11} = a^{11}(t),
\quad
\tilde{a}^{1l} = \tilde{a}^{k1} = 0,
\quad
k, l = 2, \ldots, d.
$$
Hence
\begin{align*}
I(t)& = (p-1) \int_{\bR^d_+} a^{11}(t) (D_1v)^2 |v|^{p-2} \, y_1^c \, dy
+ \frac{c(1-c)}{p} \int_{\bR^d_+} a^{11}(t) |v|^p \, y_1^{c-2} \, dy\\
&\quad + (p-1) \sum_{k,l=2}^d \int_{\bR^d_+} \tilde{a}^{kl} D_k v D_l v |v|^{p-2} \, y_1^c \, dy\\
&\ge \frac{(1-c)(p-1+c)}{p^2} \int_{\bR^d_+} a^{11}(t) |v|^p \, y_1^{c-2} \, dy\\
&\ge \delta \frac{(1-c)(p-1+c)}{p^2} \int_{\bR^d_+}
|u|^p \, x_1^{c-2} \, dx,
\end{align*}
where we used Lemma \ref{lem0920_1} and the ellipticity condition of $a^{ij}$.
In particular, the latter implies  $a^{11}(t) \ge \delta$ and the ellipticity condition of $\tilde{a}^{kl}$, $k,l \ge 2$, so we see that
$$
\sum_{k,l=2}^d \int_{\bR^d_+} \tilde{a}^{kl} D_k v D_l v |v|^{p-2} \, y_1^c \, dy
\ge \delta \int_{\bR^d_+} |Dv|^2 |v|^{p-2} y_1^c \, dy \ge 0.
$$
Then using the above estimate of $I(t)$ and the non-negativity of the first term in \eqref{eq0920_2}, we obtain
\begin{align*}
&\delta_1 \int_{\bR_T \times \bR^d_+} |u|^p \, x_1^{c-2} \, dx \, dt
+ \lambda \int_{\bR_T \times \bR^d_+} |u|^p \, x_1^c \, dx \, dt
\le - \int_{\bR_T \times \bR^d_+} f |u|^{p-2} u \, x_1^c \, dx \, dt\\
&\le \left(\int_{\bR_T \times \bR^d_+} |f|^p \, x_1^{c-2+2p} \, dx \, dt \right)^{1/p}
\left(\int_{\bR_T \times \bR^d_+} |u|^p \, x_1^{c-2} \, dx \, dt \right)^{(p-1)/p},
\end{align*}
where
$$
\delta_1 = \delta \frac{(1-c)(p-1+c)}{p^2} > 0.
$$
This clearly shows \eqref{eq0925_1}.

{\bf Case 2}: $a^{11} = a^{11}(x_1)$.
In this case, we multiply both sides of \eqref{eq0919_1} by $|u|^{p-2} u \, x_1^c /a^{11}$ and proceed as above.
One noteworthy step is
$$
\int_{\bR_T\times \bR^d_+} u_t |u|^{p-2} u \, \frac{x_1^c}{a^{11}(x_1)} \, dx \, dt
= \int_{\bR^d_+} \frac{x_1^c}{a^{11}(x_1)} \int_{\bR_T}  \frac{1}{p} D_t \left( (u^2)^{\frac{p}{2}} \right) \, dt \, dx
$$
$$
= \int_{\bR^d_+} \frac{1}{p} |u|^p(T,x) \, \frac{x_1^c}{a^{11}(x_1)} \, dx \ge 0,
$$
where we made use of the fact that $a^{11}$ is independent of time.
\end{proof}

Once we have the estimate \eqref{eq0925_1} for solutions of \eqref{eq0919_1}, using the $L_p$-estimates, developed, for example, in \cite{MR2338417, MR2300337, MR2332574, MR2896169, MR2833589},  for equations with measurable coefficients in Sobolev spaces without weights, we obtain the following theorem.

\begin{theorem}[Non-divergence case]
							\label{thm1011}
Let $T \in (-\infty, \infty]$, $\lambda \ge 0$, $1< p < \infty$, $\theta\in (d-1,d-1+p)$, and $u \in \fH_{p,\theta}^2(-\infty,T)$ satisfy
\begin{equation}
							\label{eq1002_1}
- u_t + a^{ij} D_{ij} u - \lambda u = f
\end{equation}
in $\bR_T \times \bR^d_+$,
where $Mf \in \bL_p(-\infty,T)$.
Then
\begin{equation}
							\label{eq1002_2}
\lambda \| M u \|_{p,\theta}
+ \| M^{-1} u \|_{p,\theta} + \| D u \|_{p,\theta}
+ \| M D^2 u \|_{p,\theta} + \| M u_t \|_{p,\theta}
\le N \|Mf\|_{p,\theta},
\end{equation}
where $\| \cdot \|_{p,\theta} = \| \cdot \|_{\bL_{p,\theta}(-\infty,T)}$ and $N = N(d, \delta, \theta, p)$.

Moreover, for any $f \in M^{-1}\bL_p(-\infty,T)$, there exists a unique solution $u \in \fH_{p,\theta}^2(-\infty,T)$ to the equation \eqref{eq1002_1}.
\end{theorem}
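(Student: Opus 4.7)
\textbf{Strategy and the bound on $M^{-1}u$.} The plan is three-step: first establish $\|M^{-1}u\|_{p,\theta}\le N\|Mf\|_{p,\theta}$ directly from Proposition \ref{prop0925_1}, then upgrade to the full estimate \eqref{eq1002_2} via a scaled-and-cut off localization that transfers the problem to the unweighted $L_p$-theory available for our coefficient class, and finally conclude with the method of continuity. Set $c:=\theta-d-p+2$. The assumption $\theta\in(d-1,d-1+p)$ is precisely $1-p<c<1$, while $x_1^{c-2}=x_1^{-p}\cdot x_1^{\theta-d}$ and $x_1^{c-2+2p}=x_1^{p}\cdot x_1^{\theta-d}$, so the two integrals in Proposition \ref{prop0925_1} equal $\|M^{-1}u\|_{p,\theta}^p$ and $\|Mf\|_{p,\theta}^p$. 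The proposition gives the desired inequality for $C_0^\infty$ solutions; density of $C_0^\infty((-\infty,T]\times\bR^d_+)$ in $\fH_{p,\theta}^2(-\infty,T)$, combined with boundedness of the coefficients (so that $Mf_n:=-M(u_n)_t+a^{ij}MD_{ij}u_n-\lambda Mu_n\to Mf$ in $\bL_{p,\theta}$), extends it to every $u\in\fH_{p,\theta}^2(-\infty,T)$.

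\textbf{Higher-derivative bounds via localization and scaling.} Fix $\zeta\in C_0^\infty((1/2,2))$ with $\sum_{n\in\bZ}\zeta^p(2^n x_1)\equiv 1$ on $\bR_+$, and for each $n\in\bZ$ set $\tilde u_n(t,x):=u(4^{-n}t,2^{-n}x)$, $w_n:=\tilde u_n\,\zeta(x_1)$, $a_n^{ij}(t,x_1):=a^{ij}(4^{-n}t,2^{-n}x_1)$. A direct computation shows that $w_n$ satisfies on $\bR\times\bR^d$
\[
-(w_n)_t+a_n^{ij}D_{ij}w_n-4^{-n}\lambda\,w_n=4^{-n}f(4^{-n}t,2^{-n}\cdot)\,\zeta+2a_n^{i1}(D_i\tilde u_n)\zeta'+a_n^{11}\tilde u_n\zeta''.
\]
The coefficients $a_n^{ij}$ still satisfy Assumption \ref{assum3.1} with the same ellipticity constant, and $\operatorname{supp}w_n\subset\{1/2<x_1<2\}$, so the boundary $x_1=0$ is irrelevant. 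Hence the unweighted $L_p$-estimates of \cite{MR2338417, MR2300337, MR2332574, MR2896169, MR2833589}, which were established for exactly this class of coefficients (measurable in $(t,x_1)$, with $a^{11}$ a function of at most one of $t$ and $x_1$), apply to $w_n$ on $\bR\times\bR^d$ and control $\|(w_n)_t\|_{L_p}$, $\|D^2 w_n\|_{L_p}$, and $4^{-n}\lambda\|w_n\|_{L_p}$ by the $L_p$-norm of the right-hand side of the displayed equation, with constant independent of $n$ and $\lambda$. Raising each such inequality to the $p$-th power, multiplying by an appropriate $n$-dependent weight, summing in $n\in\bZ$, and inverting the change of variables converts the left-hand side into $\|Mu_t\|_{p,\theta}^p+\|MD^2 u\|_{p,\theta}^p+\lambda^p\|Mu\|_{p,\theta}^p$, and the right-hand side into a multiple of $\|Mf\|_{p,\theta}^p$ plus commutator contributions controlled by $\|M^{-1}u\|_{p,\theta}^p$ and $\|Du\|_{p,\theta}^p$. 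The former is handled by Step 1, the latter by the standard interpolation $\|Du\|_{p,\theta}\le\varepsilon\|MD^2u\|_{p,\theta}+N_\varepsilon\|M^{-1}u\|_{p,\theta}$ from \cite{MR1708104}, yielding \eqref{eq1002_2}.

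\textbf{Existence and the main obstacle.} Uniqueness is immediate from \eqref{eq1002_2}. For existence, apply the method of continuity along $a^{ij}_s:=(1-s)\delta^{ij}+s\,a^{ij}$, $s\in[0,1]$: Assumption \ref{assum3.1} and the ellipticity and boundedness bounds are preserved uniformly in $s$, so \eqref{eq1002_2} holds uniformly; at $s=0$ the heat equation is solvable in $\fH_{p,\theta}^2(-\infty,T)$ by the original result of Krylov \cite{MR1708104}, hence solvability propagates to $s=1$. The technical heart of the argument is Step 2: the cutoff-commutator errors $(D\tilde u_n)\zeta'$ and $\tilde u_n\zeta''$ must, after rescaling and summation, land exactly in weighted norms absorbable by Step 1 and by interpolation, which requires a carefully chosen summation weight and hinges on the scaling-invariance of Assumption \ref{assum3.1}, so that the cited unweighted $L_p$-theory applies to every $\{a_n^{ij}\}_n$ with a single constant.
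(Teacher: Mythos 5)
Your strategy mirrors the paper's almost exactly: reduce $\|M^{-1}u\|_{p,\theta}$ to Proposition \ref{prop0925_1} with $c=\theta-d-p+2$, transfer to the unweighted $L_p$-theory of \cite{MR2833589} via cutoffs, reassemble the weighted norm by summing/integrating over scales, close the loop with the interpolation inequality for $\|Du\|_{p,\theta}$, and finish with the method of continuity. Your discrete dyadic decomposition with $\zeta(2^nx_1)$ and explicit rescaling of $(t,x)$, $a^{ij}$, and $\lambda$ is fully equivalent to the paper's one-parameter cutoff family $\zeta_r(x_1)=\zeta(rx_1)$ multiplied by $r^{-1-\theta+d-p}$ and integrated over $r\in(0,\infty)$; the scale-invariance of Assumption \ref{assum3.1} makes both work with a constant independent of the scale.

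There is, however, a genuine gap in the existence step. The method of continuity requires solvability of $-u_t+\Delta u-\lambda u=f$ in $\fH_{p,\theta}^2(-\infty,T)$ at $s=0$, and you cite \cite{MR1708104} for this. But \cite[Lemma 5.7]{MR1708104} covers only $\lambda=0$. You cannot then bootstrap in $\lambda$ because the a priori estimate does not let you absorb $\lambda Mu$: the $\fH_{p,\theta}^2$-norm controls $M^{-1}u$, $Du$, $MD^2u$, $Mu_t$ but \emph{not} $\|Mu\|_{p,\theta}$, so writing $-u_t+\Delta u=f+\lambda u$ gives no contraction near $\lambda=0$. The paper closes this by taking $f\in C_0^\infty\left((-\infty,T]\times\bR^d_+\right)$, using the classical unweighted $W_p^{1,2}$-solvability of the damped heat equation on the half-space with zero boundary data (valid for $\lambda>0$), and then verifying that this smooth solution belongs to $\fH_{p,\theta}^2(-\infty,T)$ by Hardy's inequality together with the localization machinery (see \eqref{eq1221_4} and Remark \ref{rem12_9}); the density of such $f$ in $M^{-1}\bL_{p,\theta}$ and the a priori estimate then give the general case. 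You need this argument or a substitute for it to make your existence proof complete when $\lambda>0$.
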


\begin{proof}
The proof is similar to that of Lemma 2.2 in \cite{MR1690093}.
We first prove the estimate \eqref{eq1002_2}.
Note that because
$$
\lambda u = -u_t + a^{ij}D_{ij} u  - f \in M^{-1} \bL_p(-\infty,T),
$$
we have $\lambda M u \in \bL_p(-\infty,T)$. Then one can find $u_n \in C_0^\infty\left((-\infty,T] \times \bR^d_+\right)$ such that (see Theorem 1.19 and Remark 5.5 in \cite{MR1708104})
$$
\|u_n - u\|_{\fH_{p,\theta}^2(-\infty,T)} \to 0,
\quad
\lambda \|Mu_n - Mu\|_{\bL_{p,\theta}(-\infty,T)} \to 0
$$
as $n \to \infty$. Hence it suffices to prove \eqref{eq1002_2} for $u \in C_0^\infty\left((-\infty,T] \times \bR^d_+\right)$.
Take a function $\zeta=\zeta(x_1) \in C_0^\infty(\bR^+)$ such that
$$
\int_0^\infty r^{-1-\theta+d-p}|\zeta(r)|^p \, dr = 1.
$$
Then $u \zeta$ satisfies
$$
- \left(u \zeta\right)_t
+ a^{ij} D_{ij} \left(u \zeta \right) - \lambda u \zeta
= f \zeta + 2 a^{i1} \zeta'D_i u + a^{11}\zeta''u.
$$
Note that $u \zeta \in W_p^{1,2}\left((-\infty,T) \times \bR^d\right)$
and $a^{ij}$ satisfy the assumptions of Theorems 3.1 and 4.1 in \cite{MR2833589}. Thus we have
\begin{equation}
							\label{eq1004_1}
\lambda \|u\zeta\|_p + \|D^2\left(u\zeta\right)\|_p
+ \|\left(u\zeta\right)_t\|_p
\le N \left( \|f \zeta\|_p + \|\zeta' Du \|_p + \|\zeta'' u\|_p\right),
\end{equation}
where $\| \cdot \|_p = \| \cdot \|_{L_p\left((-\infty,T) \times \bR^d\right)}$ and $N = N(d,\delta,p)$.
We write
$$
 \zeta D_{ij} u= D_{ij}\left( u \zeta \right) - 2 D_i u D_j \zeta - u D_{ij} \zeta,
$$
from which and \eqref{eq1004_1}, we get
$$
\lambda^p \|u \zeta\|^p_p + \|\zeta D_{ij} u\|_p^p
\le N(d,\delta,p) \left(\|f \zeta\|^p_p + \| \zeta' D u\|^p_p  + \|\zeta'' u \|^p_p\right).
$$
Now we substitute here $\zeta_r(x_1) = \zeta (r x_1)$ in place of $\zeta$, where $r > 0$ is a parameter, multiply both sides of the inequality by $r^{-1-\theta+d-p}$, and integrate with respect to $r$ over $(0,\infty)$.
Then
\begin{equation}
							\label{eq1004_4}
\lambda^p \|M u\|_{p,\theta}^p
+ \| M D^2 u \|_{p,\theta}^p
\le N \left( \|M f\|_{p,\theta}^p + \| Du \|_{p,\theta}^p + \|M^{-1} u \|_{p,\theta}^p \right),
\end{equation}
where $N=N(d,\delta,\theta,p)$.
Note that by Proposition \ref{prop0925_1} with $c = 2 + \theta - d - p$ we have
\begin{equation}
							\label{eq1004_5}
\|M^{-1} u \|^p_{p,\theta}
\le N(d, \delta, \theta, p) \|M f\|_{p,\theta}^p.
\end{equation}
To estimate $\|Du\|_{p,\theta}$, by setting $\eta_r(x_1) = (r x_1)^{-1}\zeta(r x_1)$, we observe that
\begin{align}
							\label{eq1004_3}
\| Du \|_{p,\theta}^p
&= \int_0^\infty \| D (u \eta_r) - u D ( \eta_r ) \|^p_p \, r ^{d - \theta -1} \, dr
\nonumber \\
&\le 2^p \int_0^\infty \| D (u \eta_r) \|_p^p \, r^{d - \theta -1} \, dr
+ N(d,\theta,p) \| M^{-1} u \|_{p,\theta}.
\end{align}
By the interpolation inequality
\begin{equation*}
\| D(u \eta_r) \|_p^p
\le \varepsilon \| D^2(u \eta_r) \|^p_p + N\varepsilon^{-1} \| u \eta_r \|^p_p,
\end{equation*}
which combined with
$$
\|D^2 (u \eta_r) \|_p
\le N \left( \| \eta_r D^2 u \|_p + \|\eta_r' D u \|_p
+ \| \eta_r'' u \|_p \right)
$$
gives
$$
\|D (u \eta_r)\|_p^p
\le \varepsilon \| \eta_r D^2 u \|^p_p + \varepsilon \|\eta_r' D u \|^p_p
+ \varepsilon \| \eta_r'' u \|^p_p
+ N \varepsilon^{-1} \|u \eta_r\|_p^p.
$$
By plugging this with $\varepsilon r^{-p}$ in place of $\varepsilon$ into \eqref{eq1004_3}, we get
$$
\|D u\|_{p,\theta}^p
\le \varepsilon \| M D^2 u \|_{p,\theta}^p
+ \varepsilon
\| D u \|_{p, \theta}^p
+ \left(\varepsilon + N\varepsilon^{-1} \right) \| M^{-1} u \|_{p,\theta}^p,
$$
where $N = N(d, \theta, p)$.
From this with a sufficiently small $\varepsilon < 1/2$,
and the inequalities \eqref{eq1004_4} and \eqref{eq1004_5}, we obtain
$$
\lambda \| Mu \|_{p,\theta}
+ \| M^{-1} u \|_{p,\theta} + \| D u \|_{p,\theta} + \| M D^2 u \|_{p,\theta}
\le N \| M f \|_{p,\theta},
$$
where $N = N(d,\delta,\theta,p)$.
Finally to get \eqref{eq1002_2}, we use the equation \eqref{eq1002_1}.

Now that we have an a priori estimate, thanks to the method of continuity, to prove the second assertion of the theorem for unique solvability, we only need to prove the solvability of $-u_t + \Delta u - \lambda u = f$.
The case when $\lambda = 0$ follows from \cite[Lemma 5.7]{MR1708104}.
For $\lambda > 0$,
due to Remark 5.5 in \cite{MR1708104} and the a priori estimate \eqref{eq1002_2}, we assume that $f \in C_0^\infty\left((-\infty,T] \times \bR^d_+\right)$. In this case there exists a solution $u$ with $u(t,0,x') = 0$, which is infinitely differentiable and belongs to $W_p^{1,2}\left((-\infty,T) \times \bR^d_+\right)$.
Then we obtain the solvability if we show that $u \in \fH_{p,\theta}^2(-\infty,T)$.
To prove this, we follow the lines described above for the proof of \eqref{eq1002_2} once we check  that
$M^{-1} u \in \bL_{p,\theta}(-\infty,T)$, which follows easily
from Hardy's inequality (for instance, see Theorem 5.2 in \cite{MR802206}) and the fact that $u \in W_p^{1,2}\left((-\infty,T) \times \bR^d_+\right)$, so in particular,
\begin{equation}
							\label{eq1221_4}
\begin{aligned}
\int_{\bR_T \times \bR^d_+}
1_{x_1 \in (1,\infty)} |u|^p \, x_1^{\theta - d - p} \, dx \, dt &< \infty,
\\
\int_{\bR_T \times \bR^d_+}
1_{x_1 \in (0,1)} |D^2 u|^p \, x_1^{\theta - d + p} \, dx \, dt &< \infty.
\end{aligned}
\end{equation}
The theorem is proved.
\end{proof}

\begin{remark}
							\label{rem12_9}
When $\lambda > 0$ and  $a^{ij}$ satisfy Assumption \ref{assum3.1},
the above proof shows that, for $f \in L_p\left( (-\infty, T) \times \bR^d_+\right)$, a solution  $u \in W_p^{1,2}\left((-\infty,T) \times \bR^d_+\right)$ to the equation
$$
-u_t + a^{ij} D_{ij} u - \lambda u = f
$$
in $(-\infty,T) \times \bR^d_+$ with the boundary condition $u(t,0,x') = 0$
also belongs to $\fH_{p,\theta}^2(-\infty,T)$
if  $M f \in \bL_{p,\theta}(-\infty,T)$.
Indeed, we can repeat the same argument (see the inequalities in \eqref{eq1221_4}) used in the above proof when we show that the solution $u \in W_p^{1,2}\left((-\infty,T) \times \bR^d_+\right)$ to the equation $-u_t + \Delta u - \lambda u = f$ with $u(t,0,x') = 0$ is in $\fH_{p,\theta}^2(-\infty,T)$.
\end{remark}

\subsection{Divergence type equations}

We begin with a result analogous to Proposition \ref{prop0925_1} for divergence form equations.

\begin{proposition}
							\label{prop1009}
Let $T \in (-\infty, \infty]$, $\lambda \ge 0$, $2 \le p < \infty$, $1-p < c < 1$, and $u \in C_0^{\infty}((-\infty,T] \times \bR^d_+)$ satisfy
\begin{equation}
							\label{eq0925_2}
- u_t + D_i \left( a^{ij} D_j u \right) - \lambda u = D_i g_i + f
\end{equation}
in $\bR_T \times \bR^d_+$,
where $g = (g_1,\ldots,g_d)$, $f \equiv 0$ if $\lambda = 0$, and
$$
\int_{\bR_T \times \bR^d_+} |g|^p \, x_1^{c-2+p} \, dx \, dt < \infty,
\quad
\int_{\bR_T \times \bR^d_+} |f|^p \, x_1^{c-2} \, dx \, dt < \infty.
$$
Then
\begin{align}
							\label{eq0925_3}
&\int_{\bR_T \times \bR^d_+} |u|^p \, x_1^{c-2} \, dx \, dt
\nonumber\\
&\le N \int_{\bR_T \times \bR^d_+} |g|^p \, x_1^{c-2+p} \, dx \, dt
+ N \int_{\bR_T \times \bR^d_+}   |u|^{p-1} |f| \, x_1^c  \, dx \, dt,
\end{align}
where $N = N(d, \delta, c, p)$.
\end{proposition}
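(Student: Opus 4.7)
The plan is to adapt the strategy of Proposition \ref{prop0925_1} to the divergence-form setting. We multiply equation \eqref{eq0925_2} by the test function $-|u|^{p-2}u\,x_1^c$ in Case 1 ($a^{11}=a^{11}(t)$) and by $-|u|^{p-2}u\,x_1^c/a^{11}(x_1)$ in Case 2 ($a^{11}=a^{11}(x_1)$), mirroring the choice of multipliers in Proposition \ref{prop0925_1}. Because $u\in C_0^\infty((-\infty,T]\times\bR^d_+)$ vanishes near $\partial\bR^d_+$, near $t=-\infty$, and at spatial infinity, all integrations by parts leave no boundary terms except the non-negative contribution $\tfrac{1}{p}\int_{\bR^d_+}|u(T,x)|^p\,x_1^c\,dx$ (resp.\ its $1/a^{11}$-weighted analogue) arising from the $u_t$ piece.

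For Case 1, integration by parts on $-D_i(a^{ij}D_ju)\cdot|u|^{p-2}u\,x_1^c$ produces the coercive quadratic form $(p-1)\int a^{ij}D_iu\,D_ju\,|u|^{p-2}x_1^c$ together with the cross term $c\int a^{1j}D_ju\,|u|^{p-2}u\,x_1^{c-1}$. The $t$-dependent change of variables from Proposition \ref{prop0925_1} diagonalizes the coefficient matrix so that $\tilde a^{1l}=0$ for $l\ge 2$, reducing the cross term to $c\int a^{11}(t)\,D_1v\,|v|^{p-2}v\,y_1^{c-1}$. A further integration by parts in $y_1$, valid because $a^{11}$ is independent of $y_1$ in Case 1, converts it to $\tfrac{c(1-c)}{p}\int a^{11}(t)|v|^p\,y_1^{c-2}$. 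Combined with $(p-1)\int a^{11}(t)(D_1v)^2|v|^{p-2}y_1^c$ and Hardy's inequality (Lemma \ref{lem0920_1}), this yields the lower bound $\delta(1-c)(p-1+c)/p^2\cdot\int|u|^p\,x_1^{c-2}$ exactly as in Proposition \ref{prop0925_1}; the non-negative quadratic form in the $k,l\ge 2$ directions is simply discarded.

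For the right-hand side, integration by parts on the $D_ig_i$ term produces $(p-1)\int g_iD_iu\,|u|^{p-2}x_1^c+c\int g_1|u|^{p-2}u\,x_1^{c-1}$. Apply Cauchy--Schwarz in the pairing $g_iD_iu$ to absorb a small multiple of $\int a^{ij}D_iu\,D_ju\,|u|^{p-2}x_1^c$ into the left-hand side, leaving $N\int|g|^2|u|^{p-2}x_1^c$. H\"older with exponents $(p/2,p/(p-2))$ and weights $x_1^{c-2+p}$ on $|g|^p$ and $x_1^{c-2}$ on $|u|^p$---valid since $(c-2+p)(2/p)+(c-2)(p-2)/p=c$---gives
\[
\int|g|^2|u|^{p-2}x_1^c\le\Bigl(\int|g|^p x_1^{c-2+p}\Bigr)^{2/p}\Bigl(\int|u|^p x_1^{c-2}\Bigr)^{(p-2)/p},
\]
and Young's inequality then yields an absorbable $\varepsilon'\int|u|^p x_1^{c-2}$ plus the target $N\int|g|^p x_1^{c-2+p}$. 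The term $c\int g_1|u|^{p-1}x_1^{c-1}$ is handled analogously via H\"older $(p,p/(p-1))$, since $(c-2+p)/p+(c-2)(p-1)/p=c-1$. The $f$-term is simply bounded by $\int|f||u|^{p-1}x_1^c$, matching the right-hand side of \eqref{eq0925_3} directly.

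The main technical obstacle lies in Case 2, where the analogous cross term $c\int a^{11}(x_1)\,D_1u\,|u|^{p-2}u\,x_1^{c-1}$ cannot be integrated by parts in $x_1$ without differentiating the merely measurable coefficient $a^{11}$. Following Proposition \ref{prop0925_1}, this is circumvented by working with the modified multiplier $-|u|^{p-2}u\,x_1^c/a^{11}(x_1)$: in the $(1,1)$-piece of the divergence operator the factor $a^{11}$ cancels against $1/a^{11}$, so the subsequent integration by parts on $D_1u\cdot|u|^{p-2}u\,x_1^c$ never touches $a^{11}$; for $(i,j)\ne(1,1)$ the integration by parts is performed in a direction transverse to $x_1$, which likewise never differentiates $a^{11}(x_1)$. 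The effective coefficient matrix $a^{ij}/a^{11}$ remains symmetric and elliptic (with ellipticity constant $\delta^2$), and the rest of the argument proceeds in parallel with Case 1. The restriction $p\ge 2$ is used throughout to ensure that $|u|^{p-2}$ is locally bounded and that the chain-rule identity $D_i(|u|^{p-2}u)=(p-1)|u|^{p-2}D_iu$ holds pointwise.
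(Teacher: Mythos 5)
The proposal breaks down in the case $a^{11}=a^{11}(x_1)$. You propose to test the equation against $-|u|^{p-2}u\,x_1^c/a^{11}(x_1)$, on the grounds that this worked in the non-divergence Proposition~\ref{prop0925_1}. But there the cancellation $a^{11}\cdot 1/a^{11}=1$ happens pointwise in the term $a^{11}D_{11}u$ \emph{before} any integration by parts, so $a^{11}$ is never differentiated. In divergence form this is impossible: the weak formulation requires pairing $a^{ij}D_j u$ against $D_i\varphi$, so for $i=1$ you must compute $D_1\bigl(|u|^{p-2}u\,x_1^c/a^{11}(x_1)\bigr)$, which involves $D_1\bigl(1/a^{11}(x_1)\bigr)$ -- a quantity that does not exist for a merely measurable $a^{11}$. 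In other words $|u|^{p-2}u\,x_1^c/a^{11}(x_1)$ is not even an admissible test function. The paper flags exactly this obstruction (``when $a^{11}=a^{11}(x_1)$, it is not possible to multiply both sides of the equation by $1/a^{11}$ as in the non-divergence case'') and resolves it differently: it replaces the weight $x_1^c$ by $\bigl(\phi(x_1)\bigr)^c$ with $\phi(x_1)=\int_0^{x_1}dr/a^{11}(r)$. Then $D_1\bigl(\phi^c\bigr)=c\,\phi^{c-1}/a^{11}$, so the required $1/a^{11}$ factor is \emph{produced} by the chain rule rather than inserted by hand, and subsequent integration by parts only differentiates powers of $\phi$, never $a^{11}$ itself. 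The associated change of variables also uses $y_1=\phi(x_1)$ (Jacobian $1/a^{11}$), not $y_1=x_1$. This is a genuinely different, and essential, idea.

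A secondary issue: in the case $a^{11}=a^{11}(t)$ you state that all of the $(D_1v)^2$ coercivity goes into Hardy's inequality and that ``the non-negative quadratic form in the $k,l\ge 2$ directions is simply discarded'', yet one sentence later you propose to absorb a small multiple of $\int a^{ij}D_iu\,D_ju\,|u|^{p-2}x_1^c$ into the left-hand side. These are inconsistent: if everything coercive is used up or thrown away, there is nothing to absorb into. The paper resolves this by splitting off a fraction $\nu=\min\bigl\{\tfrac{p-1+c}{2(1-c)},\tfrac{p-1}{2}\bigr\}$ of the $(D_1v)^2$ term and retaining a $\delta$-multiple of the transverse quadratic form, so that both $\int|u|^p x_1^{c-2}$ and $\int|Du|^2|u|^{p-2}x_1^c$ appear with positive coefficients on the left before the $g$-term is estimated by Young's inequality. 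You need this intermediate bookkeeping for the absorption step you invoke to actually be available.

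Your handling of the boundary terms, the $f$-term, and the H\"older/Young exponent arithmetic for the $g$-terms is fine, but the two points above are not cosmetic; the first is a wrong approach that the paper explicitly warns against, and the second leaves the proof without the coercive reserve it needs.
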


\begin{proof}
The proof is similar to that of Proposition \ref{prop0925_1}, but a bit more involved because we need to take care of the terms containing $|Du|^2$  in the right-hand side of inequalities. Moreover, when $a^{11}=a^{11}(x_1)$, it is not possible to multiply both sides of the equation by $1/a^{11}$ as in the non-divergence case.
We prove this case first.

{\bf Case 1}: $a^{11} = a^{11}(x_1)$.
Set
$$
\phi(x_1) = \int_0^{x_1} \frac{1}{a^{11}(r)} \, dr.
$$
Note that $\phi(x_1)$ is comparable to $x_1$, that is
$$
\delta \le \phi(x_1)/x_1 \le \delta^{-1},
\quad
x_1 > 0.
$$

Using $-|u|^{p-2} u \, \left(\phi(x_1)\right)^c$ as a test function on \eqref{eq0925_2}, we have
\begin{multline}
							\label{eq0925_4}
\int_{\bR_T\times \bR^d_+} u_t |u|^{p-2} u \, \phi^c (x_1) \, dx \, dt
+ \int_{\bR_T \times \bR^d_+} a^{ij} D_j u D_i \left( |u|^{p-2} u \, \phi^c (x_1)  \right) \, dx \, dt
\\
+ \lambda \int_{\bR_T \times \bR^d_+} |u|^{p-2}u^2 \, \phi^c (x_1)  \, dx \, dt
= \int_{\bR_T \times \bR^d_+} g_i D_i \left( |u|^{p-2} u \, \phi^c (x_1)  \right) \,  dx \, dt
\\
- \int_{\bR_T \times \bR^d_+} f |u|^{p-2} u \, \phi^c (x_1) \, dx \, dt.
\end{multline}
Note that by integrating by parts
\begin{equation*}
\int_{\bR_T\times \bR^d_+} u_t |u|^{p-2} u \, \phi^c(x_1) \, dx \, dt
= \int_{\bR^d_+} \frac{1}{p} |u|^p(T,x) \,\phi^c(x_1) \, dx,
\end{equation*}
and
\begin{align*}
&\int_{\bR_T \times \bR^d_+} a^{11} D_1 u D_1 \left( |u|^{p-2} u \, \phi^c(x_1) \right) \, dx \, dt\\
&= (p-1) \int_{\bR_T \times \bR^d_+} a^{11} (D_1 u)^2 |u|^{p-2} \, \phi^c(x_1) \, dx \, dt\\
&\quad + c \int_{\bR_T \times \bR^d_+} (D_1 u) |u|^{p-2} u \, \phi^{c-1}(x_1) \, dx \, dt,
\end{align*}
where the last term is equal to
$$
\frac{c(1-c)}{p}\int_{\bR_T \times \bR^d_+} |u|^p \frac{1}{a^{11}} \, \phi^{c-2}(x_1) \, dx \, dt
$$
due to $\frac{1}{p} D_1 \left( |u|^p \right) = |u|^{p-2} u D_1u$ and integration by parts in $x_1$.
For $j=2,\ldots,d$,
\begin{align*}
&\int_{\bR_T \times \bR^d_+} a^{1j} D_j u D_1 \left( |u|^{p-2} u \, \phi^c(x_1) \right) \, dx \, dt\\
&= (p-1) \int_{\bR_T \times \bR^d_+} a^{1j} D_1 u D_j u |u|^{p-2} \, \phi^c(x_1) \, dx \, dt
\end{align*}
since the integral of the term containing the derivative in $x_1$ of $\phi^c(x_1)$ is zero.
Indeed, by integration by parts (note that $a^{ij}$ are independent of $x_j$, $j=2,\ldots,d$)
\begin{align*}
&\int_{\bR_T \times \bR^d_+} \frac{a^{1j}}{a^{11}} D_j u |u|^{p-2} u \, \phi^{c-1}(x_1) \, dx \, dt \\
&=
\frac{1}{p}\int_{\bR_T \times \bR^d_+}
\frac{a^{1j}}{a^{11}} D_j \left( |u|^p \right) \, \phi^{c-1}(x_1) \, dx \, dt = 0.
\end{align*}
For the other $(i,j)$, that is, $i = 2, \ldots, d$,
\begin{align*}
&\int_{\bR_T \times \bR^d_+} a^{ij} D_j u D_i \left( |u|^{p-2} u \, \phi^c (x_1)  \right) \, dx \, dt\\
&= (p-1) \int_{\bR_T \times \bR^d_+} a^{ij} D_i u D_j u |u|^{p-2} \, \phi^c (x_1) \, dx \, dt.
\end{align*}
For the terms in the right-hand side of \eqref{eq0925_4}, we have
\begin{align*}
&\sum_{i=1}^d \int_{\bR_T \times \bR^d_+} g_i D_i \left( |u|^{p-2} u \, \phi^c (x_1)  \right) \,  dx \, dt\\
&= (p-1) \sum_{i=1}^d \int_{\bR_T \times \bR^d_+} g_i |u|^{p-2} D_i u \, \phi^c (x_1) \,  dx \, dt\\
&\quad + c \int_{\bR_T \times \bR^d_+} g_1 |u|^{p-2} u \frac{1}{a^{11}(x_1)} \, \phi^{c-1} (x_1) \,  dx \, dt\\
&\le \varepsilon_1
\int_{\bR_T \times \bR^d_+}
|Du|^2 |u|^{p-2} \, x_1^c \, dx \, dt
+ \varepsilon_2 \int_{\bR_T \times \bR^d_+} |u|^p \, x_1^{c-2} \, dx \, dt\\
&\quad +  N(\varepsilon_1, \varepsilon_2, d, \delta, c, p)
\int_{\bR_T \times \bR^d_+}
|g|^p \, x_1^{c-2+p} \, dx \, dt,
\end{align*}
where we used the condition $p \ge 2$ and Young's inequality,
and
$$
- \int_{\bR_T \times \bR^d_+} f |u|^{p-2} u \, \phi^c (x_1) \, dx \, dt
\le \int_{\bR_T \times \bR^d_+} |u|^{p-1} |f| \phi^c(x_1) \, dx \, dt.
$$

Thus from \eqref{eq0925_4} combined with the above calculations we have
\begin{align}
							\label{eq0925_5}
&\frac{1}{p} \int_{\bR_T \times \bR^d_+} |u|^p(T,x) \, \phi^c (x_1) \, dx \, dt
+ \int_{\bR_T} I(t) \, dt
+ \lambda \int_{\bR_T \times \bR^d_+} |u|^p \, \phi^c (x_1) \, dx \, dt
\nonumber\\
&\le
\varepsilon_1
\int_{\bR_T \times \bR^d_+}
|Du|^2 |u|^{p-2} \, x_1^c \, dx \, dt
+ \varepsilon_2 \int_{\bR_T \times \bR^d_+} |u|^p \, x_1^{c-2} \, dx \, dt
\nonumber\\
&\quad + N
\int_{\bR_T \times \bR^d_+}
|g|^p x_1^{c-2+p} \, dx \, dt +
\int_{\bR_T \times \bR^d_+} |u|^{p-1} |f| \phi^c(x_1) \, dx \, dt,
\end{align}
where $N = N(\varepsilon_1, \varepsilon_2, d, \delta, c, p)$ and
\begin{align*}
I(t) &= (p-1) \int_{\bR^d_+} \frac{a^{ij} + a^{ji}}{2} D_i u D_j u |u|^{p-2} \, \phi^c (x_1)\, dx\\
&\quad + \frac{c(1-c)}{p}\int_{\bR^d_+} |u|^p \frac{1}{a^{11}(x_1)} \, \phi^{c-2} (x_1) \, dx.
\end{align*}

Now, for each $t \in (-\infty, T]$, we consider a change of variables $y=y(t,x)$, where
$$
y_1 = \phi(x_1),
\quad
y_i = - \int_0^{x_1} \frac{a^{1i}(t,r)+a^{i1}(t,r)}{2 a^{11}(r)} \, dr + x_i,
\quad
i = 2, \ldots, d.
$$
Then
$y(0)=0$,
$\partial y_1/\partial x_1 = 1/a^{11}(x_1)$,
$\partial y_i/\partial x_i=1$, $i=2,\ldots,d$,
$$
\partial y_i/\partial x_1 =-\frac{a^{1i}(t,x_1) + a^{i1}(t, x_1)}{2 a^{11}(x_1)},
\quad
i=2, \ldots, d,
$$
and
$$
\partial y_i/\partial x_j =0,
\quad
i \ne j, \,\, j = 2, \ldots, d.
$$
This is a one to one Lipschitz map from $\bR^d_+$ to $\bR^d_+$ and its Jacobian is equal to $1/a^{11}(x_1)$.
Set
$v(t,y) = u(t,x)$.
Then, for each $t \in (-\infty, T]$,
$$
I(t)
= (p-1) \int_{\bR^d_+} \tilde{a}^{kl} D_k v D_l v |v|^{p-2} a^{11}\left(\phi^{-1}(y_1)\right) \, y_1^c \, dy
+ \frac{c(1-c)}{p} \int_{\bR^d_+} |v|^p \, y_1^{c-2} \, dy,
$$
where
$$
\tilde{a}^{kl} = \sum_{i,j=1}^d \frac{a^{ij}+a^{ji}}{2} \frac{\partial y_k}{\partial x_i} \frac{\partial y_l}{\partial x_j}.
$$
By the definition of $y=y(t,x)$ we observe that
$$
\tilde{a}^{11} = \frac{1}{a^{11}\left(\phi^{-1}(y_1)\right)},
\quad
\tilde{a}^{1l} = \tilde{a}^{k1} = 0,
\quad
k, l = 2, \ldots, d,
$$
which implies that
$$
\xi^k \tilde{a}^{kl} \xi^l \ge \delta |\xi|^2
$$
for $\xi \in \bR^d$.
Hence
\begin{align*}
&I(t) = (p-1) \int_{\bR^d_+} (D_1 v)^2 |v|^{p-2} y_1^c \, dy + \frac{c(1-c)}{p} \int_{\bR^d_+} |v|^p y_1^{c-2} \, dy\\
&\,+ (p-1) \sum_{k,l=2}^d \int_{\bR^d_+} \tilde{a}^{kl} D_k v D_l v |v|^{p-2} a^{11}\left( \phi^{-1}(y_1) \right) \, y_1^c \, dy
:= I_1(t) + I_2(t) + I_3(t).
\end{align*}
Set
$$
\nu = \min \left\{ \frac{p-1+c}{2(1-c)}, \frac{p-1}{2} \right\} > 0
$$
and note that, by Lemma \ref{lem0920_1},
\begin{align*}
&I_1(t) + I_2(t)\\
&= (p-1 - \nu) \int_{\bR^d_+} (D_1 v)^2 |v|^{p-2} y_1^c \, dy + \frac{c(1-c)}{p} \int_{\bR^d_+} |v|^p y_1^{c-2} \, dy\\
&\quad+ \nu \int_{\bR^d_+} (D_1 v)^2 |v|^{p-2} y_1^c \, dy\\
&\ge \frac{(1-c)\left(p-1+c - \nu(1-c) \right)}{p^2} \int_{\bR^d_+} |v|^p y_1^{c-2} \, dy
+ \nu \int_{\bR^d_+} (D_1 v)^2 |v|^{p-2} y_1^c \, dy.
\end{align*}
Using the ellipticity condition on $I_3(t)$ and the change of variables back to $u$, we get
\begin{align*}
I(t) &\ge N(c,p) \int_{\bR^d_+} |v|^p y_1^{c-2} \, dy
+\nu \int_{\bR^d_+} |D_1v|^2 |v|^{p-2} y_1^c \, dy\\
&\,\,+ \delta(p-1) \int_{\bR^d_+} |D_{y'}v|^2 |v|^{p-2} a^{11}\left(\phi^{-1}(y_1)\right) \, y_1^c \, dy\\
&\ge N \int_{\bR^d_+} |u|^p x_1^{c-2} \, dx
+ N \int_{\bR^d_+} |D u|^2 |u|^{p-2} x_1^c \, dx,
\end{align*}
where $N =N(\delta,c,p)$.
Then using the above estimate of $I(t)$, the non-negativity of the first and third term in \eqref{eq0925_5},
and appropriate $\varepsilon_1, \varepsilon_2>0$, we finally obtain \eqref{eq0925_3}.

{\bf Case 2}: $a^{11} = a^{11}(t)$.
In this case, we proceed as above with $\phi(x_1)=x_1$.
\end{proof}

Now we give a weighted $L_p$-estimate of $Du$ in terms of those of the lower order term $u$ and the data.
We first show a version of such estimate in Sobolev spaces without weights.

\begin{lemma}
							\label{lem1010}
Let $T \in (-\infty,\infty]$, $\lambda \ge 0$, $1<p<\infty$, and $0 < r < R$.
For $y_1 \in \bR$, set the indicator function
$$
I_\tau(x_1) := 1_{(y_1 - \tau , y_1 + \tau)}(x_1),
\quad
\tau \in (0,\infty)
$$
and $\cD=\bR_T \times (y_1-R, y_1+R)\times \bR^{d-1}$.
If  $u \in \cH_p^1(\cD)$
satisfies
$$
- u_t + D_i \left( a^{ij} D_j u \right) - \lambda u = D_i g_i + f
$$
in $\cD$,
where $g = (g_1,\ldots,g_d)$ and $f, g \in L_p(\cD)$.
Then
\begin{equation}
							\label{eq1010_1}
\sqrt{\lambda}\|I_r u \|_p+\| I_r \, Du \|_p
\\
\le N \|I_R \, g \|_p +
\frac{N(R-r)}{\sqrt{\lambda(R-r)^2+1}} \|I_R f\|_p
+ \frac{N}{R-r} \|I_R \, u\|_p,
\end{equation}
where $\| \cdot \|_p = \| \cdot \|_{L_p\left( (-\infty,T) \times \bR^d \right)}$ and $N = N(d,\delta,p)$.
\end{lemma}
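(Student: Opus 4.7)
The plan is to reduce the interior estimate to a global $L_p$-estimate for divergence-form parabolic equations on $\bR_T\times\bR^d$ via a smooth cutoff in the $x_1$-variable, and then to absorb the stray gradient term produced by the cutoff using a Campanato-type iteration. The global input I will invoke is: for the coefficients of Assumption~\ref{assum3.1} and any $\Lambda>0$, whenever $v\in\cH_p^1(\bR_T\times\bR^d)$ solves
$$-v_t+D_i(a^{ij}D_j v)-\Lambda v=D_iG_i+F\quad\text{on }\bR_T\times\bR^d,$$
one has $\sqrt{\Lambda}\,\|v\|_p+\|Dv\|_p\le N(\|G\|_p+\Lambda^{-1/2}\|F\|_p)$ with $N=N(d,\delta,p)$. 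This is available from the divergence-form $L_p$-theory with coefficients measurable in $(t,x_1)$ developed in \cite{MR2601069, MR2764911}.

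Given $r\le s_1<s_2\le R$, set $\tau=s_2-s_1$ and choose $\eta=\eta(x_1)\in C_0^\infty(\bR)$ with $\eta\equiv 1$ on $[y_1-s_1,y_1+s_1]$, $\operatorname{supp}\eta\subset(y_1-s_2,y_1+s_2)$, $|\eta'|\le 4/\tau$, and $|\eta''|\le N/\tau^2$. A direct product-rule computation, which crucially uses that $a^{ij}(t,x_1)$ is independent of $x_j$ for $j\ge 2$ when $(i,j)\ne(1,1)$, shows that $v:=\eta u$, extended by zero, satisfies on $\bR_T\times\bR^d$
$$-v_t+D_i(a^{ij}D_jv)-\lambda v=D_i\bigl(\eta g_i+a^{i1}u\eta'\bigr)+\eta f-\eta' g_1+\eta' a^{1j}D_j u.$$
The last term is the delicate one: when $a^{11}=a^{11}(x_1)$ is merely measurable, $\eta' a^{1j}D_j u$ cannot be rewritten as a clean divergence plus a pure $u$-term, so it has to remain as a forcing.

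To produce the sharp factor $(R-r)/\sqrt{\lambda(R-r)^2+1}$ in front of $\|I_R f\|_p$, I shift the spectral parameter: set $\Lambda=\lambda+K\tau^{-2}$ (with $K>0$ to be chosen), move $-(K/\tau^2)v$ to the right-hand side, and apply the global estimate above with parameter $\Lambda$. Using $\Lambda^{-1/2}=\tau/\sqrt{\lambda\tau^2+K}$ and $\Lambda^{-1/2}/\tau\le 1/\sqrt{K}$, every term bounds pointwise; in particular $\Lambda^{-1/2}\|\eta' a^{1j}D_j u\|_p\le(N/\sqrt{K})\|I_{s_2}Du\|_p$ and $\Lambda^{-1/2}(K/\tau^2)\|v\|_p\le(\sqrt{K}/\tau)\|I_{s_2}u\|_p$. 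After replacing $\|I_{s_2}\cdot\|_p$ by $\|I_R\cdot\|_p$ (monotone in $s_2$, and using that $s\mapsto s/\sqrt{\lambda s^2+K}$ is increasing to handle the $f$-term), one arrives at
$$\Phi(s_1)\le\frac{N_0}{\sqrt{K}}\,\Phi(s_2)+N\|I_R g\|_p+N\,\frac{R-r}{\sqrt{\lambda(R-r)^2+K}}\,\|I_R f\|_p+\frac{N(1+\sqrt{K})}{s_2-s_1}\,\|I_R u\|_p,$$
where $\Phi(s):=\sqrt{\lambda}\,\|I_s u\|_p+\|I_s Du\|_p$. Choosing $K=4N_0^2$ makes the prefactor of $\Phi(s_2)$ equal to $1/2$; the standard iteration lemma (if $Z$ is bounded on $[r,R]$ and $Z(s_1)\le\tfrac12 Z(s_2)+A/(s_2-s_1)+B$ for all $r\le s_1<s_2\le R$, then $Z(r)\le C(A/(R-r)+B)$) eliminates the $\Phi$-term on the right, yielding $\Phi(r)\le N[\|I_R g\|_p+\tfrac{R-r}{\sqrt{\lambda(R-r)^2+K}}\|I_R f\|_p+(R-r)^{-1}\|I_R u\|_p]$. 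Since $K\ge 1$, $\tfrac{R-r}{\sqrt{\lambda(R-r)^2+K}}\le\tfrac{R-r}{\sqrt{\lambda(R-r)^2+1}}$, and \eqref{eq1010_1} follows.

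The main obstacle is precisely the gradient term $\eta' a^{1j}D_j u$ forced onto the right-hand side by the cutoff, which cannot be absorbed into the divergence part when $a^{11}$ is only measurable in $x_1$. The shift $\Lambda=\lambda+K\tau^{-2}$ combined with the iteration lemma is what allows this term to be absorbed with a small coefficient while simultaneously producing the unified $\lambda$- and $(R-r)$-dependence in the final bound.
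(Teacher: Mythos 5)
Your proof is correct, and the backbone — cutoff in $x_1$, the exact same product-rule decomposition producing the forcing terms $\eta g_i + a^{i1}u\eta'$, $\eta f - \eta'g_1 + \eta'a^{1j}D_ju$, the spectral shift to introduce a strictly positive parameter, and the global $L_p$-estimate for divergence-form equations with coefficients measurable in $(t,x_1)$ from \cite{MR2764911} — is identical to the paper's. Where you diverge is in the mechanism used to absorb the stray gradient term $\eta'a^{1j}D_ju$: the paper iterates over a discrete family of cutoffs at dyadic radii $r_n = r + (R-r)\sum_{i\le n}2^{-i}$, shifts by an \emph{increasing} sequence $\lambda_n$ with $\sqrt{\lambda_n}=N_0 2^n\varepsilon^{-1}/(R-r)$, multiplies the $n$-th inequality by $\varepsilon^n$ with $\varepsilon=1/8$, and sums the resulting geometric series so that the $\|Du_{n+1}\|_p$ terms telescope away. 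You instead fix a single shift $\Lambda=\lambda+K\tau^{-2}$, derive a one-parameter inequality $\Phi(s_1)\le\tfrac12\Phi(s_2)+A/(s_2-s_1)+B$ by choosing $K=4N_0^2$, and invoke the standard Campanato-Giaquinta iteration lemma. Both routes are valid; your version is somewhat cleaner because it delegates the absorption to a well-known abstract lemma rather than hand-crafting a convergent series, and it makes the role of the spectral shift (manufacturing the uniform factor $(R-r)/\sqrt{\lambda(R-r)^2+1}$ in front of $\|I_R f\|_p$) more transparent. The boundedness of $\Phi$ on $[r,R]$ required by the iteration lemma holds because $u, Du\in L_p(\cD)$, and the final constant depends only on $d,\delta,p$ since $K=K(d,\delta,p)$; those checks are all in order.
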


\begin{proof}
Due to translation, we only need to prove the case $y_1 = 0$.
Let $\zeta(x_1)$ be an infinitely differentiable function such that $\zeta(x_1) = 1$ on $x_1 \le 0$ and $\zeta(x_1) = 0$ on $x_1 \ge 1$.
Define $r_0 = r$,
$$
r_n = r + (R-r) \sum_{i = 1}^n 2^{-i},
\quad
\zeta_n(x_1) = \zeta\left( \frac{2^{n+1}}{R-r}\left( |x_1| - r_n \right) \right).
$$
Note that
$\zeta_n(x_1) = 1$ if $|x_1| \le r_n$, $\zeta_n(x_1) = 0$ if $|x_1| \ge r_{n+1}$, and
$$
\left|\zeta'_n(x_1)\right| \le N \frac{2^n}{R-r}.
$$
Set $u_n = u \zeta_n$.
Then we have
$$
- \left(u_n \right)_t
+ D_i \left(a^{ij} D_j u_n \right) - (\lambda + \lambda_n) u_n
= D_i g_{n,i}
+ f_n - \lambda_n u_n
$$
in $\bR_T \times \bR^d$,
where $\lambda_n$, $n=0,1,\ldots$, is an increasing sequence specified below, and
$$
g_{n,i} =  \zeta_n' a^{i1} u +  \zeta_n g_i,
\quad
f_n = \zeta_n' a^{1j} D_j u - \zeta_n' g_1 + \zeta_n f.
$$
By Theorem 5.1 in \cite{MR2764911},
$$
\sqrt{\lambda+\lambda_n} \|u_n\|_p +
\|D u_n \|_p
\le N \left(
\|g_{n,i}\|_p + \frac{1}{\sqrt{\lambda +\lambda_n}}\|f_n\|_p + \frac{\lambda_n}{\sqrt{\lambda +\lambda_n}} \|u_n\|_p \right)
$$
$$
\le N \left(\|g_{n,i}\|_p + \frac{1}{\sqrt{\lambda+\lambda_n}}\|f_n\|_p + \sqrt{\lambda_n} \|u_n\|_p \right),
$$
where $N = N(d,\delta,p)$.
Note that
$$
|I_{n+1} Du| \le |D \left( \zeta_{n+1} u \right)|
= |D u_{n+1}|,
$$
where $I_{n+1} = I_{r_{n+1}}(x_1)$. Then we see that
$$
\| g_{n,i}\|_p
\le N \frac{2^n}{R-r}
\|I_R \, u\|_p
+ N \|I_R \, g_i\|_p,
$$
$$
\| f_n \|_p \le
N \frac{2^n}{R-r} \| D u_{n+1} \|_p
+ N \frac{2^n}{R-r} \|I_R \, g\|_p + N \|I_R \, f\|_p.
$$
Hence
$$
\sqrt{\lambda} \|u_n\|_p + \|D u_n \|_p
\le N_0 \frac{2^n}{R-r} \| I_R u \|_p + N_0 \|I_R g\|_p
+ \frac{N_0 2^n}{\sqrt{\lambda_n}(R-r)}\|D u_{n+1}\|_p
$$
$$
 +  \frac{N_02^n}{\sqrt{\lambda_n}(R-r)} \|I_R g\|_p + \frac{N_0}{\sqrt{\lambda+\lambda_0}} \|I_R f \|_p +N_0 \sqrt{\lambda_n} \|I_R \, u\|_p,
$$
where we take the same $N_0 = N_0(d,\delta,p)\ge 1$ throughout the terms.
Multiply both sides by $\varepsilon^n$ and make summations of both sides with respect to $n = 0,1,\ldots$ to get
\begin{align}
							\label{eq1031_1}
&\sqrt{\lambda} \sum_{n=0}^\infty\varepsilon^n \|u_n\|_p
+ \|D u_0 \|_p
+ \sum_{n=1}^\infty\varepsilon^n \|D u_n \|_p
\nonumber\\
&\le \frac{N_0}{R-r} \sum_{n=0}^\infty (2 \varepsilon)^n\| I_R u \|_p + N_0 \sum_{n=0}^\infty \left( \varepsilon^n + \frac{(2\varepsilon)^n}{\sqrt{\lambda_n} (R-r)}\right)\|I_R g\|_p
\nonumber\\
&\,+ \frac{N_0}{R-r} \sum_{n=1}^\infty \frac{(2\varepsilon)^{n-1}}{\sqrt{\lambda_{n-1}}} \|D u_n\|_p
+ N_0 \sum_{n=0}^\infty\frac{\varepsilon^n}
{\sqrt{\lambda+\lambda_0}} \|I_R f \|_p + N_0 \sum_{n=0}^\infty\varepsilon^n \sqrt{\lambda_n} \|I_R \, u\|_p.
\end{align}
Set
$$
\sqrt{\lambda_n} = \frac{N_0}{R-r} 2^n \varepsilon^{-1},
\quad
n = 0,1,\ldots,
\quad
\varepsilon = \frac 1 8.
$$
Then
$$
\sum_{n=1}^\infty \varepsilon^n \|D u_n\|_p = \frac{N_0}{R-r} \sum_{n=1}^\infty \frac{(2 \varepsilon)^{n-1}}{\sqrt{\lambda_{n-1}}} \| D u_n \|_p < \infty.
$$
After removing the above terms from both sides of \eqref{eq1031_1}, and then calculating the summations,
in particular,
$$
\sum_{n=0}^\infty \frac{\varepsilon^n}{\sqrt{\lambda+\lambda_0}} \|I_R f\|_p \le \frac{2(R-r)}{\sqrt{\lambda (R-r)^2 + 1}} \|I_R f\|_p,
$$
we obtain \eqref{eq1010_1}.
\end{proof}

We establish an a priori estimate in the next proposition.

\begin{proposition}
							\label{pro1220}
Let $T \in (-\infty, \infty]$, $\lambda \ge 0$, $2 \le p < \infty$, $\theta \in (d-1,d-1+p)$, and $u \in C_0^\infty\left((-\infty,T] \times \bR^d_+\right)$ satisfy
\begin{equation}
							\label{eq1004_6}
- u_t + D_i \left( a^{ij} D_j u \right) - \lambda u = D_i g_i + f
\end{equation}
in $\bR_T \times \bR^d_+$,
where $g = (g_1,\ldots,g_d)$,
$g, f \in \bL_{p,\theta}(-\infty,T)$, and $f \equiv 0$ if $\lambda = 0$.
Then
\begin{equation}
							\label{eq1008_1}
\sqrt{\lambda} \|u\|_{p,\theta}
+ \| M^{-1} u \|_{p,\theta} + \| D u \|_{p,\theta}
\le N \|g\|_{p,\theta} + \frac{N}{\sqrt \lambda} \|f\|_{p,\theta},
\end{equation}
where $\| \cdot \|_{p,\theta} = \| \cdot \|_{\bL_{p,\theta}(-\infty,T)}$ and $N = N(d, \delta, \theta, p)$.
\end{proposition}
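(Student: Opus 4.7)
The plan is to deduce \eqref{eq1008_1} by combining a scale-integrated form of Lemma \ref{lem1010} with the boundary bound of Proposition \ref{prop1009}. By density, it suffices to treat $u \in C_0^\infty((-\infty,T]\times\bR^d_+)$, which is already the hypothesis.

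First I would localize in the $x_1$-direction: for each $y_1 > 0$, apply Lemma \ref{lem1010} on the slab $\bR_T\times(y_1/2, 3y_1/2)\times\bR^{d-1} \subset \bR_T\times\bR^d_+$ with $R = y_1/2$ and $r = y_1/4$. I then raise the resulting inequality to the $p$-th power, multiply by $y_1^{\theta-d-1}$, and integrate in $y_1$ over $(0, \infty)$. By Fubini each localized $L_p$-norm collapses to the corresponding weighted $L_{p,\theta}$-norm over $\bR^{d+1}_+$: for fixed $x_1$ the range of $y_1$ with $x_1 \in I_R$ is an interval comparable to $x_1$, and $\int y_1^{\theta-d-1}\,dy_1 \asymp x_1^{\theta-d}$, while the extra $1/(R-r)^p \asymp 1/y_1^p$ factor converts the $u$-remainder into $\|M^{-1}u\|_{p,\theta}^p$. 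The $f$-coefficient $(R-r)/\sqrt{\lambda(R-r)^2+1}$ is bounded pointwise by $\lambda^{-1/2}$, so the $f$-term contributes $\lambda^{-p/2}\|f\|_{p,\theta}^p$ cleanly. This yields
\[
\lambda^{p/2}\|u\|_{p,\theta}^p + \|Du\|_{p,\theta}^p \le N\bigl(\|g\|_{p,\theta}^p + \lambda^{-p/2}\|f\|_{p,\theta}^p + \|M^{-1}u\|_{p,\theta}^p\bigr).
\]

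Next I would apply Proposition \ref{prop1009} with $c = \theta-d-p+2$, which lies in $(1-p, 1)$ precisely because $\theta \in (d-1, d-1+p)$, to bound
\[
\|M^{-1}u\|_{p,\theta}^p \le N\|g\|_{p,\theta}^p + N\int_{\bR_T\times\bR^d_+} |u|^{p-1}|f|\,x_1^{\theta-d-p+2}\,dx\,dt.
\]
To close the combined inequality I need to control the last integral by small multiples of $\|M^{-1}u\|_{p,\theta}^p$ and $\lambda^{p/2}\|u\|_{p,\theta}^p$ plus a (possibly large) multiple of $\lambda^{-p/2}\|f\|_{p,\theta}^p$. The plan is a three-term H\"older decomposition: write $|u|^{p-1}|f| = |u|^{p-2}\cdot|u|\cdot|f|$ and split the weight as $x_1^{(\theta-d-p)(p-2)/p}\cdot x_1^{(\theta-d)/p}\cdot x_1^{(\theta-d)/p}$, whose exponents sum to $\theta-d-p+2$. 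H\"older with exponents $p/(p-2), p, p$ (reciprocals summing to $1$) then gives
\[
\int|u|^{p-1}|f|\,x_1^{\theta-d-p+2}\,dx\,dt \le \|M^{-1}u\|_{p,\theta}^{p-2}\|u\|_{p,\theta}\|f\|_{p,\theta},
\]
and a scaled three-factor Young's inequality applied to $\|M^{-1}u\|_{p,\theta}^{p-2}\cdot(\sqrt\lambda\|u\|_{p,\theta})\cdot(\lambda^{-1/2}\|f\|_{p,\theta})$ produces, for every $\varepsilon > 0$, a bound $\varepsilon\|M^{-1}u\|_{p,\theta}^p + \varepsilon\lambda^{p/2}\|u\|_{p,\theta}^p + C_\varepsilon\lambda^{-p/2}\|f\|_{p,\theta}^p$. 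Choosing $\varepsilon$ small enough lets me absorb $\|M^{-1}u\|_{p,\theta}^p$ on the left of the Proposition \ref{prop1009}-bound, then substitute into the scale-integrated estimate and again absorb $\lambda^{p/2}\|u\|_{p,\theta}^p$ on the left, reading off \eqref{eq1008_1}.

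The main obstacle is the three-term H\"older/Young step. A naive two-term H\"older on the integral $\int|u|^{p-1}|f|\,x_1^{\theta-d-p+2}$ produces either $\|M^{-1}u\|_{p,\theta}^{p-1}\|Mf\|_{p,\theta}$ or $\|u\|_{p,\theta}^{p-1}\|M^{2-p}f\|_{p,\theta}$, and neither weighted norm of $f$ is controlled by the hypothesis $f \in \bL_{p,\theta}(-\infty,T)$ once $p > 2$. The specific three-term split with exponents $p/(p-2), p, p$ is available precisely because $p \ge 2$, and it is this delicate matching between the Hardy-type weight $x_1^{\theta-d-p+2}$ inherited from Proposition \ref{prop1009} and the $L_{p,\theta}$-weight of the hypothesis that explains the $p \ge 2$ restriction in the proposition.
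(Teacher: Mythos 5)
Your proposal is correct and follows essentially the same route as the paper: integrate Lemma~\ref{lem1010} over scales in $x_1$ (the paper uses a dyadic sequence $r_n=2^{-n/3}r$; you integrate continuously, but the effect is the same), invoke Proposition~\ref{prop1009} with $c=\theta-d-p+2$, and then close by a Young-type interpolation of the cross term $\int|u|^{p-1}|f|\,x_1^{\theta-d-p+2}$. Your three-factor H\"older at the norm level followed by a scaled Young is a minor reorganization of the paper's two pointwise applications of Young's inequality, and your order of absorption (absorbing $\|M^{-1}u\|_{p,\theta}^p$ directly in the Proposition~\ref{prop1009} bound) avoids the explicit appeal to Hardy's inequality that the paper makes at the end, but both are equivalent.
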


\begin{proof}
For $r > 0$, set
$$
r_n = 2^{-n/3} r,
\quad
n = -1, 0, 1, \ldots.
$$
Then
$$
r_{n-1}-r_n>r_{n+1}-r_{n+2}=(2^{1/3} - 1) r_{n+2}.
$$
and
$$
(r_{n+1}, r_n)
\subset (r_{n+1},r_{n-1}+r_{n+2}-r_{n+1})
\subset (r_{n+2}, r_{n-1}).
$$
Denote
$$
I_n(x_1) = 1_{(r_{n+1}, r_n)},
\,\,
\tilde{I}_n(x_1) = 1_{(r_{n+1},r_{n-1}+r_{n+2}-r_{n+1})},
\,\,
J_n(x_1) = 1_{(r_{n+2}, r_{n-1})}.
$$
Observe that $u \in \cH_p^{1}\left(\bR_T\times (r_{n+2},r_{n-1}) \times \bR^{d-1}\right)$. By applying Lemma \ref{lem1010} with $\tilde{I}_n$ and $J_n$ in place of $I_r$ and $I_R$, respectively, we get
$$
\lambda^{\frac p 2}\|I_n u\|_p^p+ \| I_n \, Du \|^p_p
\le N \|J_n \, g \|^p_p + N \lambda^{-\frac p 2}\|J_n \, f\|^p_p
+ N r^{-p}_{n+2} \|J_n \, u\|^p_p.
$$
Using the fact that, for $x_1 \in (r_{n+2}, r_{n-1})$,
$$
1 \le \frac{x_1}{r_{n+2}} \le 2
$$
and multiplying both sides by $r_{n+2}^{\theta -d}$,
we obtain
\begin{align*}
&\lambda^{\frac p 2} \int_{\bR_T \times \bR^d_+} I_n |u|^p \, x_1^{\theta-d} \, dx \, dt
+ \int_{\bR_T \times \bR^d_+} I_n |Du|^p \, x_1^{\theta-d} \, dx \, dt\\
&\le N \int_{\bR_T \times \bR^d_+} J_n |g|^p \, x_1^{\theta-d} \, dx \, dt
+ N \lambda^{-\frac p 2} \int_{\bR_T \times \bR^d_+} J_n |f|^p \, x_1^{\theta-d} \, dx \, dt\\
&\,\,+ N \int_{\bR_T \times \bR^d_+} J_n |u|^p \, x_1^{\theta - d - p} \, dx \, dt.
\end{align*}
Take the summations of both sides with respect to $n = 0, 1, \ldots$ to get
\begin{align*}
&\lambda^{\frac p 2} \int_{\bR_T \times \bR^d_+} I_r |u|^p \, x_1^{\theta-d} \, dx \, dt
+ \int_{\bR_T \times \bR^d_+}I_r |Du|^p \, x_1^{\theta - d} \, dx \, dt\\
&\le N \int_{\bR_T \times \bR^d_+} |g|^p \, x_1^{\theta-d} \, dx \, dt
+ N \lambda^{-\frac p 2} \int_{\bR_T \times \bR^d_+}  |f|^p \, x_1^{\theta-d} \, dx \, dt\\
&\,\,+ N \int_{\bR_T \times \bR^d_+} |u|^p \, x_1^{\theta - d - p} \, dx \, dt,
\end{align*}
where we denote
$I_r = 1_{(0,r)}(x_1)$.
Upon sending $r \to \infty$, we see that
\begin{multline}
							\label{eq1219_1}
\lambda^{\frac p 2} \|u\|_{p,\theta}^p + \|Du\|_{p,\theta}^p
\le N \|g\|_{p,\theta}^p + N \lambda^{-\frac p 2} \|f\|_{p,\theta}^p + N\|M^{-1}u\|_{p,\theta}^p
\\
\le N \|g\|_{p,\theta}^p + N \lambda^{-\frac p 2} \|f\|_{p,\theta}^p + N \int_{\bR_T \times \bR^d_+}|u|^{p-1} |f| \, x_1^{\theta-d-p+2} \, dx \, dt,
\end{multline}
where the second inequality is due to Proposition \ref{prop1009} with $c = \theta - d - p +2$.
Observe that, by using Young's inequality twice, the last term in \eqref{eq1219_1} is bounded by
\begin{align*}
&N(\varepsilon) \lambda^{-\frac p 2} \int_{\bR_T \times \bR^d_+} |f|^p \, x_1^{\theta-d} \, dx \, dt
+ \varepsilon \lambda^{\frac{p}{2(p-1)}} \int_{\bR_T \times \bR^d_+} |u|^p \, x_1^{\theta-d-\frac{p-2}{p-1}p} \, dx \, dt\\
&\le N(\varepsilon) \lambda^{-\frac p 2} \|f\|_{p,\theta}^p
+ \varepsilon \left(\|M^{-1}u\|_{p,\theta}^p + \lambda^{\frac p 2} \|u\|_{p,\theta}^p \right)
\end{align*}
for any $\varepsilon > 0$.
From this, \eqref{eq1219_1} with an appropriate $\varepsilon > 0$,  and Hardy's inequality, we prove \eqref{eq1008_1}.
\end{proof}

Now we are ready to prove the main result of this subsection.

\begin{theorem}[Divergence case]
							\label{thm1011_1}
Let $T \in (-\infty, \infty]$, $\lambda \ge 0$, $1 < p < \infty$, $\theta\in (d-1,d-1+p)$, and $u \in \cH_{p,\theta}^{1,\lambda}(-\infty,T)$ satisfy \eqref{eq1004_6} in $\bR_T \times \bR^d_+$,
where $g = (g_1,\ldots,g_d)$,
$g, f \in \bL_{p,\theta}(-\infty,T)$, and $f \equiv 0$ if $\lambda = 0$.
Then we have the estimate \eqref{eq1008_1}.

Moreover, for $g, f \in \bL_{p,\theta}(-\infty,T)$ such that $f \equiv 0$ if $\lambda = 0$, there exists a unique solution $u \in \cH_{p,\theta}^{1, \lambda}(-\infty,T)$ to the equation \eqref{eq1004_6}.
\end{theorem}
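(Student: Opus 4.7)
The plan is to extend the a priori estimate \eqref{eq1008_1} from the restricted setting of Proposition \ref{pro1220} (smooth compactly supported test functions, $p \ge 2$) to arbitrary $u \in \cH_{p,\theta}^{1,\lambda}(-\infty,T)$ for the full range $1 < p < \infty$, and then to establish existence via the method of continuity. I would first handle $2 \le p < \infty$: since $C_0^\infty((-\infty,T] \times \bR^d_+)$ is dense in $\cH_{p,\theta}^{1,\lambda}(-\infty,T)$ (as noted after the definition of this space in Section \ref{sec02}), approximate $u$ by a sequence $u_n$ of such test functions. Each $u_n$ satisfies the divergence equation with perturbed data $(g_n, f_n)$ that converges to $(g, f)$ in $\bL_{p,\theta}$ (e.g., by splitting $u_{n,t}$ and grouping terms into the divergence and $\sqrt\lambda$ parts). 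Proposition \ref{pro1220} applies to each $u_n$, and passing to the limit yields \eqref{eq1008_1} for $u$.

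For $1 < p < 2$, I would argue by duality. Set $p' = p/(p-1) \in (2,\infty)$; under the pairing $\int h k\,\mu_d(dx\,dt)$, the dual of $\bL_{p,\theta}$ is $\bL_{p',\theta}$. The formal adjoint of $-\partial_t + D_i(a^{ij}D_j \cdot) - \lambda$, obtained by reversing time and transposing $a^{ij} \leftrightarrow a^{ji}$, again satisfies Assumption \ref{assum3.1} (the coefficient $a^{11}$ is unchanged and the off-diagonal entries remain measurable in $(t,x_1)$). Hence the $p' \ge 2$ version of the theorem (a priori bound plus solvability, established just below) applies to the adjoint problem. Testing the original equation for $u$ against solutions $v$ of the adjoint problem driven by arbitrary data $(\tilde g, \tilde f) \in \bL_{p',\theta}$ and using the adjoint bound produces the three terms $\sqrt\lambda\|u\|_{p,\theta}$, $\|M^{-1}u\|_{p,\theta}$, $\|Du\|_{p,\theta}$ on the left and $\|g\|_{p,\theta} + \lambda^{-1/2}\|f\|_{p,\theta}$ on the right.

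For existence, I would use the method of continuity along $L_\tau = -\partial_t + D_i((\tau a^{ij} + (1-\tau)\delta^{ij})D_j\cdot) - \lambda$, $\tau \in [0,1]$: the intermediate coefficients again satisfy Assumption \ref{assum3.1} with the same ellipticity constants, so the uniform a priori estimate from the previous two paragraphs applies. Solvability at $\tau = 0$ is treated as in the proof of Theorem \ref{thm1011}: when $\lambda = 0$ (forcing $f \equiv 0$) this is covered by Lemma 5.7 of \cite{MR1708104}; when $\lambda > 0$, approximate $g, f$ by smooth compactly supported functions, extend by odd reflection in $x_1$ to the whole space, solve the resulting equation on $\bR^{d+1}$ via classical unweighted $\cH_p^1$-theory, and check that the restriction to $\bR_T \times \bR^d_+$ lies in $\cH_{p,\theta}^{1,\lambda}$ using Hardy's inequality (to control $M^{-1}u$) together with the a priori estimate just proved. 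Then the abstract method of continuity transfers solvability from $\tau = 0$ to $\tau = 1$.

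The main difficulty is avoiding circularity in the duality step of paragraph two: the adjoint a priori estimate is needed for $p' > 2$, but having it requires solvability for the adjoint equation, which is itself obtained through paragraph three. This is resolved by executing the program in the order (a) a priori estimate for $p \ge 2$, (b) solvability for $p \ge 2$ via the method of continuity above, (c) duality to get the a priori estimate for $1 < p < 2$, (d) a second application of the method of continuity, now using the full-range estimate, to get solvability for $1 < p < 2$. A secondary technical point is verifying that the boundary/tail contributions vanish when passing from the approximating smooth solutions to $u$, which follows from the estimate $\|M^{-1}u\|_{p,\theta} \le N\|g\|_{p,\theta} + N\lambda^{-1/2}\|f\|_{p,\theta}$ together with the bounded mapping $MD : \bL_{p,\theta} \to \bH_{p,\theta}^{-1}$ recalled in Section \ref{sec02}.
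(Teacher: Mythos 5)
Your overall strategy (density approximation for $p\ge2$, duality for $1<p<2$, method of continuity reduced to the Laplacian) agrees with the paper's; but one step has an internal inconsistency that needs fixing.

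\textbf{The duality pairing.} You take the dual of $\bL_{p,\theta}$ to be $\bL_{p',\theta}$ under the weighted pairing $\int hk\,\mu_d(dx\,dt)$, and then treat the formal adjoint of $-\partial_t + D_i(a^{ij}D_j\cdot)-\lambda$ as if it were simply the time-reversed, coefficient-transposed operator. These two choices are incompatible. The time-reversed/transposed operator is the adjoint with respect to the \emph{unweighted} Lebesgue pairing; with respect to the weighted pairing $\int h k\,x_1^{\theta-d}\,dx\,dt$, the adjoint operator picks up extra lower-order terms coming from the derivatives landing on the weight $x_1^{\theta-d}$, and these terms are not small or structurally innocuous near $x_1=0$. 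The paper avoids this by pairing with Lebesgue measure, in which case the dual of $\bL_{p,\theta}$ is $\bL_{q,\theta_1}$ with $1/p+1/q=1$ and $\theta/p+\theta_1/q=d$; one then checks (as the assumptions on $\theta$ ensure) that $\theta_1\in(d-1,d-1+q)$, so the $q\ge2$ estimate applies to the adjoint equation in $\bL_{q,\theta_1}$ with the clean time-reversed/transposed coefficients, which do satisfy Assumption \ref{assum3.1}. Also note that the duality argument by itself yields the bounds on $\sqrt\lambda\|u\|_{p,\theta}$ and $\|Du\|_{p,\theta}$, and $\|M^{-1}u\|_{p,\theta}$ is then recovered separately from Hardy's inequality; it does not come out of the pairing directly.

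\textbf{Solvability of the Laplace model case.} For the base case $-u_t+\Delta u-\lambda u=D_i g_i+f$ with $g,f\in C_0^\infty(\bR_T\times\bR^d_+)$, the paper observes that $D_i g_i+f$ is itself in $C_0^\infty(\bR_T\times\bR^d_+)$, so the divergence-form equation coincides with a non-divergence-form equation whose solvability in $\fH^2_{p,\theta}$ was already established in Theorem \ref{thm1011}; one then has $u\in\fH^2_{p,\theta}\subset\cH^{1,\lambda}_{p,\theta}$. Your odd-extension-to-$\bR^d$ route plus unweighted $\cH^1_p$ theory plus Hardy's inequality is also viable, and is essentially how the paper handles the corresponding step for non-divergence equations in Theorem \ref{thm1011}, but in the divergence case the reduction to the already-proved non-divergence theorem is shorter and avoids re-verifying membership in the weighted space.
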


\begin{proof}
First we prove the estimate \eqref{eq1008_1} when $p \ge 2$.
Since $-\partial_t+D_i(a^{ij}D_j)-\lambda$ is a bounded operator from $\cH_{p,\theta}^{1,\lambda}(-\infty,T)$ to $M^{-1}\bH^{-1}_{p,\theta}(-\infty,T)+
\sqrt\lambda \bL_{p,\theta}(-\infty,T)$ and $C_0^\infty\left((-\infty,T] \times \bR^d_+\right)$ is dense in $\cH_{p,\theta}^{1,\lambda}(-\infty,T)$,
we find sequences
$$
u_n \in C_0^\infty\left((-\infty,T] \times \bR^d_+\right),
\quad
g_n,\,\,f_n \in \bL_{p,\theta}(-\infty,T)
$$
such that
$$
- {u_n}_t + D_i \left( a^{ij} D_j u_n \right) -\lambda u_n = D_i {g_n}_i + f_n
$$
in $\bR_T \times \bR^d_+$.
Here $f_n\equiv0$ when $\lambda=0$.
Moreover,
$$
\|u_n - u\|_{\cH_{p,\theta}^{1,\lambda}(-\infty,T)} \to 0,
\quad
\|g_n - g\|_{\bL_{p,\theta}(-\infty,T)} \to 0,\quad \|f_n - f\|_{\bL_{p,\theta}(-\infty,T)} \to 0
$$
as $n \to \infty$.
Then we apply Proposition \ref{pro1220} to $u_n$, $g_n$, and $f_n$, and take the limits to get \eqref{eq1008_1}.

As in the non-divergence case (Theorem \ref{thm1011}), to prove the unique solvability, we prove only the solvability of $-u_t + \Delta u - \lambda u = D_i g_i + f$ assuming that $f,g\in C_0^\infty (\bR_T\times \bR^d_+)$.
However, in this case $D_i g_i + f\in C_0^\infty (\bR_T\times \bR^d_+)$, and thus the unique solvability is established in Theorem \ref{thm1011}.

For the case when $p \in (1,2)$, we note that the dual space of $\bL_{p,\theta}(-\infty,T)$ is $\bL_{q, \theta_1}(-\infty,T)$, where
$1/p+1/q=1$ and $\theta/p + \theta_1/q = d$. Keeping this in mind and following the standard duality argument, we obtain the estimate \eqref{eq1008_1} without the term $\|M^{-1}u\|_{p,\theta}$, the estimate of which then follows from Hardy's inequality.
Finally, the solvability is established as above.
The theorem is proved.
\end{proof}

\begin{remark}
							\label{rem12_14}
When $\lambda > 0$ and  $a^{ij}$ satisfy Assumption \ref{assum3.1},
similar to Remark \ref{rem12_9}, we have the following. If $u \in \cH_p^1\left((-\infty,T) \times \bR^d_+\right)$ is a solution to
$$
-u_t + D_i \left(a^{ij} D_j u\right) - \lambda u = D_i g_i + f
$$
in $(-\infty,T) \times \bR^d_+$ with the boundary condition $u(t,0,x') = 0$,
where
$$
g, f \in \bL_{p,\theta}(-\infty,T) \cap L_p\left( (-\infty, T) \times \bR^d_+\right),
$$
then $u$ belongs also to $\cH_{p,\theta}^{1,\lambda}(-\infty,T)$.
To see this, upon approximations using the weighted norm estimate \eqref{eq1008_1} in Theorem \ref{thm1011_1} and the $\cH_p^1$-estimates developed in \cite{MR2764911} for coefficients as in Assumption \ref{assum3.1}, we assume that $a^{ij}$ are infinitely differentiable with bounded derivatives and $g, f \in C_0^\infty\left((-\infty,T] \times \bR^d_+\right)$.
Then $u$ is sufficiently smooth. In particular, by writing the equation into a non-divergence type equation we see $u \in W_p^{1,2}\left((-\infty,T) \times \bR^d_+\right)$, which indicates that, as in the last part of the proof of Theorem \ref{thm1011} (see \eqref{eq1221_4}), $\|M^{-1}u\|_{\bL_{p,\theta}(-\infty,T)} < \infty$. Using this and following the proof of the first inequality in \eqref{eq1219_1}, for which we do not need the assumption $u \in C_0^\infty\left((-\infty,T] \times \bR^d_+\right)$, we conclude that  $u \in \cH_{p,\theta}^{1,\lambda}(-\infty,T)$.
\end{remark}

\section{Mean oscillation estimates}
							\label{sec04}

In this section, we denote $L_0=a^{ij}D_{ij}$ and $\cL_0=D_i(a^{ij}D_j)$, where $a^{ij}$ satisfy Assumption \ref{assum3.1}. The purpose of this section is to establish mean oscillation estimates for the gradient of $u$, which will be used in the subsequent sections.

\subsection{Non-divergence type equations}

By localizing the $L_p$-estimates without weights established in \cite{MR2332574, MR2833589} and using odd/even extensions, we get the following lemma.
\begin{lemma}
                            \label{lem4.1}
Let $p\in (1,\infty)$ and $\lambda\ge 0$. Assume that $u\in W^{1,2}_p(Q_2)$ satisfies $-u_t+L_0u-\lambda u=0$ in $Q_2$. Then there exists a constant $N=N(d,p,\delta)$ such that
$$
\|u\|_{W^{1,2}_p(Q_1)}\le N\|u\|_{L_p(Q_2)}.
$$
The same result holds if we replace $Q$ by $Q^+$ and assume $u=0$ on $\{x_1=0\}$.
\end{lemma}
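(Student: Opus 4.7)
The plan is a standard cutoff-plus-iteration (Caccioppoli-type) bootstrap that reduces the local estimate to the whole-space $L_p$-solvability for equations with coefficients measurable in $(t,x_1)$ from \cite{MR2332574, MR2833589}. I describe the interior case first; the boundary version is parallel.

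First I would pick two radii $1 \le r < r' \le 2$ and a smooth cutoff $\eta$ supported in $Q_{r'}$ with $\eta \equiv 1$ on $Q_r$ and $|\eta_t|+|D\eta|+|D^2\eta|\le N(r'-r)^{-2}$. The product $v=\eta u$, extended by zero, lies in $W^{1,2}_p(\bR^{d+1})$ and satisfies
\[
-v_t + L_0 v - \lambda v = 2a^{ij}D_i\eta\,D_j u + a^{ij}D_{ij}\eta \cdot u - \eta_t u =: F,
\]
with $\|F\|_{L_p}\le N(r'-r)^{-2}\bigl(\|u\|_{L_p(Q_{r'})}+\|Du\|_{L_p(Q_{r'})}\bigr)$. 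Since Assumption \ref{assum3.1} places $a^{ij}$ in the class treated in \cite{MR2332574, MR2833589}, the whole-space a priori estimate applies. To cover all $\lambda\ge 0$ rather than only $\lambda$ above some threshold $\lambda_0=\lambda_0(d,\delta,p)$, I rewrite the equation as $-v_t+L_0 v-(\lambda+\lambda_0)v=F-\lambda_0 v$, apply the estimate at level $\lambda+\lambda_0$, and absorb the $\lambda_0\|v\|_{L_p}$ term; this yields
\[
\|v_t\|_{L_p}+\|D^2 v\|_{L_p}\le N\bigl(\|F\|_{L_p}+\|v\|_{L_p}\bigr),
\qquad N=N(d,\delta,p).
\]
Combined with $\|v\|_{L_p}\le\|u\|_{L_p(Q_{r'})}$ and the bound on $F$, and using $u=v$ on $Q_r$, this gives the Caccioppoli-type bound
\[
\|u\|_{W^{1,2}_p(Q_r)}\le \frac{N}{(r'-r)^{2}}\bigl(\|u\|_{L_p(Q_{r'})}+\|Du\|_{L_p(Q_{r'})}\bigr).
\]

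To remove the $\|Du\|_{L_p(Q_{r'})}$ term from the right-hand side, I would combine the above with the standard interpolation $\|Du\|_{L_p}\le\varepsilon\|D^2 u\|_{L_p}+N\varepsilon^{-1}\|u\|_{L_p}$ and iterate on a sequence of nested radii $1=r_0<r_1<\cdots\uparrow 2$ with $r_{k+1}-r_k\sim 2^{-k}$. This is precisely the classical iteration lemma that converts a Caccioppoli inequality with a $(r'-r)^{-2}$ weight into a one-sided bound: choose the geometric weights so that the $\|D^2 u\|$ and $\|Du\|$ norms at each scale are absorbed into the left-hand side at the next scale, leaving only $\|u\|_{L_p(Q_2)}$ on the right.

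For the boundary case, the same cutoff $\eta$ can be chosen in $Q^+_{r'}$; because $\eta u$ still vanishes on $\{x_1=0\}$, the argument goes through verbatim once the whole-space $L_p$-solvability is replaced by its half-space analogue with zero Dirichlet data, which is also available in \cite{MR2833589} for the same coefficient class. The one delicate step, and the main thing to be careful about, is the shift $\lambda\mapsto\lambda+\lambda_0$ that lets us quote the a priori estimate uniformly in $\lambda\ge 0$; once that extra $\|v\|_{L_p}$ term is present on the right, the nested-cutoff iteration (rather than a one-shot estimate) becomes necessary, but both pieces are entirely standard.
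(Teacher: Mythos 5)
Your interior argument is correct and coincides with what the paper means by ``localizing the $L_p$-estimates'' from \cite{MR2332574, MR2833589}: cut off with $\eta$, apply the whole-space estimate after the $\lambda\mapsto\lambda+\lambda_0$ shift to cover small $\lambda$, and then iterate over nested radii to absorb the gradient term. Two small points of care there. First, once you interpolate $\|Du\|_{L_p(Q_{r'})}\le\varepsilon\|D^2u\|_{L_p(Q_{r'})}+N\varepsilon^{-1}\|u\|_{L_p(Q_{r'})}$ and choose $\varepsilon\sim(r'-r)^2$ to produce a fixed absorption factor $\theta<1$, the $\|u\|$ coefficient becomes $(r'-r)^{-4}$, not $(r'-r)^{-2}$; with the dyadic gaps $r_{k+1}-r_k\sim 2^{-k}$ you then need $\theta<2^{-4}$, which you are free to arrange, but it is worth saying so explicitly since the naive $\theta=1/2$ with that same sequence diverges. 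Second, the interpolation on $Q_{r'}$ is a domain inequality, not the whole-space one applied to $v=\eta u$; it holds with a constant uniform over $r'\in[1,2]$, but this should be said.

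Where you genuinely diverge from the paper is the half-space case. The paper's one-line note resolves it by \emph{odd/even extension}: extend $u$ oddly across $\{x_1=0\}$, extend $a^{11}$ and $a^{jk}$ ($j,k\ge 2$) evenly, and $a^{1j}$ ($j\ge 2$) oddly; Assumption \ref{assum3.1} is preserved (the extended $a^{ij}$ are still measurable in $(t,x_1)$ and $a^{11}$ still depends only on $t$ or $x_1$), the equation is preserved, and the odd extension of a $W_p^{1,2}$ function vanishing on $\{x_1=0\}$ stays in $W_p^{1,2}$, so the boundary case reduces verbatim to the interior one. You instead propose to quote a half-space $W_p^{1,2}$-solvability result with zero Dirichlet data for this coefficient class directly from \cite{MR2833589}. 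That route is fine \emph{if} such a theorem is actually stated there; the paper does not rely on it and the extension argument is more self-contained, costing only the short verification that the reflected coefficients remain in the admissible class. I would either carry out the extension as the paper intends, or pin down the precise half-space theorem you are invoking.
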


The next lemma will be used when the center of the cylinder is away from the boundary.

\begin{lemma}
                            \label{lem4.1b}
Let $1<p\le q<\infty$, and $\lambda\ge 0$. Assume that $u\in W^{1,2}_{p}(Q_2)$ satisfies
$-u_t+L_0u-\lambda u=0$ in $Q_2$.
Then $u\in W^{1,2}_q(Q_1)$ and there exists a constant $N=N(d,\delta,p,q)$ such that
\begin{equation}
                            \label{eq10.30}
\|u\|_{W^{1,2}_q(Q_1)}\le N\|u\|_{L_p(Q_2)}.
\end{equation}
Moreover, for $q>d+2$, we have
\begin{equation}
                            \label{eq10.32}
\|Du\|_{C^{\alpha/2,\alpha}(Q_1)}\le N\big\|\sqrt \lambda |u|+|Du|\big\|_{L_p(Q_2)},
\end{equation}
where $\alpha=1-(d+2)/q\in (0,1)$ and $N=N(d,p,q,\delta)$.
\end{lemma}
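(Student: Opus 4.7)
The plan is to combine a bootstrap argument with the parabolic Sobolev--Morrey embedding.

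First, for \eqref{eq10.30}, I would choose a decreasing sequence of radii $2=R_0 > R_1 > \cdots > R_K = 1$ and an increasing sequence of exponents $p_0 = p, p_1, \ldots, p_K \ge q$. At step $k$, a covering of $Q_{R_{k+1}}$ by suitably rescaled copies of $Q_2$ contained in $Q_{R_k}$, combined with Lemma~\ref{lem4.1} applied to $u$ (or to translates/rescalings of $u$), yields $\|u\|_{W^{1,2}_{p_k}(Q_{R_{k+1}})} \le N_k \|u\|_{L_{p_k}(Q_{R_k})}$. The parabolic Sobolev embedding $W^{1,2}_{p_k}(Q) \hookrightarrow L_{p_{k+1}}(Q)$, with $p_{k+1}=p_k(d+2)/(d+2-2p_k)$ in the subcritical range $p_k<(d+2)/2$ and arbitrarily large past the critical exponent, then upgrades integrability by a definite factor. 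Finitely many iterations reach $p_K\ge q$, which establishes \eqref{eq10.30}.

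Second, for \eqref{eq10.32}, the crucial structural observation is that $a^{ij}$ is independent of $x_k$ for every $k\ge 2$, so each tangential derivative $v=D_k u$ ($k\ge 2$) weakly solves the same homogeneous equation $-v_t+L_0 v-\lambda v=0$; the rescaled function $w=\sqrt\lambda\, u$ trivially solves it as well. Applying the already-proved \eqref{eq10.30} to $v$ and $w$ (after a preliminary application of Lemma~\ref{lem4.1} to $u$ that places $v$ and $w$ in $L_p$ on a cylinder slightly smaller than $Q_2$) gives
\[
\|v\|_{W^{1,2}_q(Q_{5/4})}+\|w\|_{W^{1,2}_q(Q_{5/4})} \le N \bigl\|\sqrt\lambda|u|+|Du|\bigr\|_{L_p(Q_2)}
\]
for any prescribed finite $q$. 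Choosing $q>d+2$ and invoking the standard parabolic Sobolev--Morrey embedding $W^{1,2}_q(Q)\hookrightarrow C^{1+\alpha,(1+\alpha)/2}(Q)$ with $\alpha=1-(d+2)/q$ then yields the sought Hölder bound on $D_k u$ for $k\ge 2$ and on $\sqrt\lambda\, Du$.

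The main obstacle is the Hölder bound on the normal derivative $D_1 u$, especially when $\lambda$ is small, because differentiating the equation in $x_1$ is not admissible: the coefficients $a^{ij}$ are only measurable in $x_1$. My plan is to use the equation algebraically: from
\[
a^{11}D_{11}u = u_t + \lambda u - \sum_{(i,j)\ne(1,1)} a^{ij}D_{ij}u ,
\]
integrating once in $x_1$ expresses $a^{11}D_1u$ as a parabolic integral of a right-hand side that is Hölder in $(t,x')$ by the previous paragraph and lies in $L_q$ in the $x_1$-direction for $q>d+2$. Since $a^{11}$ is bounded below by $\delta$, the resulting identity transfers this regularity to $D_1 u$ and gives the Hölder estimate of $D_1 u$ in terms of $\|\sqrt\lambda|u|+|Du|\|_{L_p(Q_2)}$, completing \eqref{eq10.32}.
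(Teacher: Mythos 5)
Your bootstrap argument for \eqref{eq10.30} is the same as the paper's.

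For \eqref{eq10.32} your route is fundamentally different from the paper's and has a real gap. You try to get H\"older regularity of $Du$ \emph{componentwise}: tangential derivatives $D_k u$ ($k\ge 2$) by differentiating the equation, $\sqrt\lambda u$ by linearity, and $D_1 u$ by integrating the equation in $x_1$. This is more complicated than necessary and the $D_1 u$ step is not sound as described. Integrating
$a^{11}D_{11}u = u_t + \lambda u - \sum_{(i,j)\ne(1,1)} a^{ij}D_{ij}u$
once in $x_1$ does not give H\"older continuity of $D_1 u$ \emph{in $t$}: the right-hand side contains $u_t$ and the mixed second derivatives $D_{1j}u$, which from \eqref{eq10.30} are only known to lie in $L_q$; an integral in $x_1$ of an $L_q$-in-time integrand is not H\"older in time. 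Moreover, the normalization by $\sqrt\lambda$ in the $w$-step does not yield a bound uniform as $\lambda\to 0$: you get $\sqrt\lambda\|Du\|_{C^{\alpha/2,\alpha}}\le N\sqrt\lambda\|u\|_{L_p}$, which upon dividing by $\sqrt\lambda$ simply reproduces the bound by $\|u\|_{L_p}$, not by $\big\|\sqrt\lambda|u|+|Du|\big\|_{L_p}$.

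The simpler observation you are missing is that you do not need to differentiate the equation at all: once $\|u\|_{W^{1,2}_q(Q_1)}$ is controlled with $q>d+2$, the parabolic Sobolev--Morrey embedding already gives H\"older continuity of \emph{all} of $Du$ (including $D_1 u$) in both $t$ and $x$. The only remaining issue is that \eqref{eq10.30} applied to $u$ produces $\|u\|_{L_p(Q_2)}$ on the right, not $\big\|\sqrt\lambda|u|+|Du|\big\|_{L_p(Q_2)}$. The paper handles this in two steps: for $\lambda=0$, replace $u$ by $\tilde u = u - (u)_{Q_2}$ (which solves the same equation) and use a parabolic Poincar\'e inequality to bound $\|\tilde u\|_{L_p(Q_2)}\le N\|Du\|_{L_p(Q_2)}$; for $\lambda>0$, apply Agmon's trick, setting $v(t,x,y)=u(t,x)\cos(\sqrt\lambda y)$ so that $v$ solves a $\lambda$-free equation in one extra spatial dimension, then apply the $\lambda=0$ case to $v$ and observe that $Dv$ comprises $\cos(\sqrt\lambda y)D_xu$ and $-\sqrt\lambda\sin(\sqrt\lambda y)u$, producing exactly the desired right-hand side. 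This avoids ever touching $D_1 u$ separately.
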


\begin{proof}
The estimate \eqref{eq10.30} follows from Lemma \ref{lem4.1} by using a standard bootstrap argument. For \eqref{eq10.32}, we first consider the case when $\lambda=0$. In this case, $\tilde u:=u-c$, where $c = \frac{1}{|Q_2|} \int_{Q_2} u \, dx \, dt$, satisfies the same equation as $u$. By \eqref{eq10.30} and Lemma 3.1 of \cite{MR2304157}, we have
$$
\|\tilde u\|_{W^{1,2}_q(Q_1)}\le N\|\tilde u\|_{L_p(Q_2)}\le N\|Du\|_{L_p(Q_2)},
$$
which gives \eqref{eq10.32} by the Sobolev embedding theorem. For the case when $\lambda>0$, we use an idea of S. Agmon; see, for example, \cite[Lemma 6.3.8]{MR2435520}.
Denote by $z = (x,y)$ a point in $\bR^{d+1}$, where $x=(x_1,x') \in \bR^{d}$, $y \in \bR$. We introduce $v(t , z)$ and $\hat{Q}_r$ given by
$$
v(t , z) = v(t, x, y) = u(t, x) \cos(\sqrt{\lambda} y),
$$
$$
\hat{Q}_r = (-r^2,0) \times  (-r,r) \times \{x' \in \bR^{d-1}, y \in \bR : |(x',y)| < r\} .
$$
Observe that
$$
\| D u \|_{C^{\alpha/2, \alpha}(Q_1)}
\le \| D v \|_{C^{\alpha/2, \alpha}(\hat{Q}_1)}.
$$
On the other hand, $v$ satisfies
$$
-v_t+L_0 v + D_y^2v = 0
\quad
\text{in}
\quad
\hat{Q}_{2}.
$$
Then by applying the above result to $v$ we obtain
\begin{equation}								 \label{eq0804}
\| D v \|_{C^{\alpha/2, \alpha}(\hat{Q}_1)}\le N \big\|Dv\big\|_{L_p(\hat Q_2)}.
\end{equation}
Notice that $D v$ is the collection consisting of
$$
\cos( \sqrt{\lambda} y) D_{x} u,
\quad
-\sqrt{\lambda} \sin (\sqrt{\lambda} y ) u.
$$
Thus the right-hand side of \eqref{eq0804} is bounded by the right-hand side of the inequality in \eqref{eq10.32}.
The lemma is proved.
\end{proof}

The lemma below will be used when the center of the cylinder is close to the boundary.

\begin{lemma}
                            \label{lem4.2}
Let $1<p\le q<\infty$, $\theta\in (d-1,d-1+p)$, and $\lambda\ge 0$. Assume that $u\in \fH^2_{p,\theta}(Q_2^+)$ satisfies
$-u_t+L_0 u-\lambda u=0$ in $Q_2^+$. 
Then $u\in W^{1,2}_q(Q_1^+)$ and there exists a constant $N=N(d,\delta,p,q)$ such that
\begin{equation}
                            \label{eq9.01}
\|u\|_{W^{1,2}_q(Q_1^+)}\le N\|u\|_{L_p(Q_2^+)}.
\end{equation}
In particular, if $q>d+2$, we have
\begin{equation}
                            \label{eq9.17}
\|Du\|_{C^{\alpha/2,\alpha}(Q^+_1)}\le N\|u\|_{L_p(Q^+_2)},
\end{equation}
where $\alpha=1-(d+2)/q\in (0,1)$.
\end{lemma}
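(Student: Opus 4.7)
The plan is to reduce the boundary estimate on $Q^+_2$ to an interior estimate on the full cylinder $Q_2$ via odd reflection, and then invoke Lemma~\ref{lem4.1b} directly. Assumption~\ref{assum3.1} is tailored so that both the coefficients $a^{ij}$ and the solution $u$ (which has zero trace on $\{x_1=0\}$) admit natural even/odd extensions preserving the structure of the equation.

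First I would establish two consequences of the hypothesis $u\in\fH^2_{p,\theta}(Q^+_2)$: (a) the zero-trace condition $u|_{\{x_1=0\}}=0$, which follows from $M^{-1}u\in L_{p,\theta}(Q^+_2)$ since the exponent $\theta-d-p<-1$ forces $u$ to vanish sharply at $x_1=0$; and (b) an unweighted estimate $\|u\|_{W^{1,2}_p(Q^+_{r_0})}\le N\|u\|_{L_p(Q^+_2)}$ on an intermediate half-cylinder $Q^+_{r_0}$ with $r_0\in(1,2)$. On compact subcylinders bounded away from $\{x_1=0\}$ the $\fH^2_{p,\theta}$ norms are equivalent to unweighted $W^{1,2}_p$ norms (the weight $x_1^{\theta-d}$ is bounded above and below by positive constants there), while on cylinders meeting the boundary one invokes the boundary version of Lemma~\ref{lem4.1}, justified by the zero trace. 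A finite covering of $Q^+_{r_0}$ by such cylinders then yields the global unweighted $W^{1,2}_p$ estimate.

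Next, using Assumption~\ref{assum3.1}, I extend $u$ by odd reflection in $x_1$ to $\tilde u$ on $Q_{r_0}$, and extend the coefficients by even reflection for $a^{11}$, $a^{jj}$ ($j\ge2$), and $a^{ij}$ ($i,j\ge2$), and by odd reflection for $a^{1j}, a^{j1}$ ($j\ge2$). A direct calculation shows $\tilde u\in W^{1,2}_p(Q_{r_0})$ satisfies $-\tilde u_t+\tilde L_0\tilde u-\lambda\tilde u=0$ in $Q_{r_0}$, where $\tilde L_0$ has extended coefficients still verifying the ellipticity and boundedness conditions as well as Assumption~\ref{assum3.1}. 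After a harmless parabolic rescaling $v(t,x)=\tilde u(\alpha^2 t,\alpha x)$ with $\alpha\in(0,1)$ placing the problem on $Q_2$, Lemma~\ref{lem4.1b} applied to $v$ yields $\|v\|_{W^{1,2}_q(Q_1)}\le N\|v\|_{L_p(Q_2)}$. Scaling back and restricting to $Q^+_1$ (on which $\tilde u=u$) produces \eqref{eq9.01}, and the H\"older bound \eqref{eq9.17} then follows from \eqref{eq9.01} via the parabolic Sobolev embedding $W^{1,2}_q\hookrightarrow C^{\alpha/2,\alpha}$ when $q>d+2$, with $\alpha=1-(d+2)/q$.

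The hard part will be step (b) above: bridging the weighted space $\fH^2_{p,\theta}(Q^+_2)$ to unweighted $W^{1,2}_p$ regularity up to $\{x_1=0\}$. The hypothesis only controls $Mu_t$ and $MD^2u$ in weighted norms whose weights $x_1$ vanish at the boundary, so it does not a priori imply $u_t, D^2u\in L_p$ near $\{x_1=0\}$. Recovering the unweighted regularity requires combining the equation, the zero-trace condition, and the boundary Lemma~\ref{lem4.1} in a careful covering argument adapted to the distance to $\{x_1=0\}$.
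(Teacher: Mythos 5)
Your high-level outline is sound: first get unweighted $W^{1,2}_p$ regularity on an intermediate half-cylinder, then iterate to $W^{1,2}_q$ and Sobolev embed. But the proposal for what you call the "hard part" — step (b), the bridge from $\fH^2_{p,\theta}(Q^+_2)$ to $W^{1,2}_p(Q^+_{r_0})$ — is circular as sketched. You propose to invoke "the boundary version of Lemma~\ref{lem4.1}" on cylinders touching $\{x_1=0\}$, but Lemma~\ref{lem4.1} has $u\in W^{1,2}_p(Q^+_2)$ as its hypothesis; it cannot be used to establish that membership. A covering argument "adapted to the distance to $\{x_1=0\}$" would control only the interior cylinders (where the weight is bounded away from $0$ and $\infty$) but cannot by itself produce a uniform $L_p$ bound on $D^2u$ as $x_1\to 0$, since the hypothesis only bounds $\|MD^2u\|_{L_{p,\theta}}$ and the factor $M=x_1$ degenerates there.

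The paper closes this gap with a two-stage cutoff argument based on uniqueness in weighted spaces, not on a covering. After reducing to $\lambda>0$ (replace $u$ by $e^{-\varepsilon t}u$), set $v=u\zeta$ for a cutoff $\zeta$; then $v$ solves the model equation on $\bR_0\times\bR^d_+$ with a right-hand side $f$ supported away from $x_1=\infty$ and such that $Mf\in\bL_{p,\theta}\cap\bL_{p,d}$ (one exploits that $f$ has compact support and the flexibility in $\theta$). By solvability and uniqueness in both $\fH^2_{p,\theta}$ and $\fH^2_{p,d}$ (Theorem~\ref{thm1011}, via an approximation $f_n=fI_{x_1\in(1/n,n)}$ and Remark~\ref{rem12_9}), the two solutions coincide, so $v\in\fH^2_{p,d}$; this shows $u\in\fH^2_{p,d}(Q^+_{5/3})$, which in particular gives $u,Du\in L_p$ there (the weight is now $x_1^0=1$). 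A second cutoff $\eta$ then produces $w=u\eta$ solving the model equation with right-hand side $g$ involving only $u$ and $Du$, hence $g\in L_p$; Remark~\ref{rem12_9} gives $w\in W^{1,2}_p(\bR_0\times\bR^d_+)$, i.e. $u\in W^{1,2}_p(Q^+_{4/3})$. Note the crucial structural point: the intermediate passage to $\theta=d$ is what gets rid of the $M$-factor in front of $D^2u$, by moving the derivative weight from second to first order via the equation. This uniqueness-in-two-weights mechanism is the missing idea in your step (b).

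Your final reduction — odd/even reflection to $Q_{r_0}$ followed by Lemma~\ref{lem4.1b} — is essentially a restatement of the boundary version of Lemma~\ref{lem4.1} (which is already obtained by the same reflections), so it works but is redundant; the paper just applies Lemma~\ref{lem4.1} directly on $Q^+$, then bootstraps and invokes Sobolev embedding for \eqref{eq9.17}. Your observations that the zero trace follows from $M^{-1}u\in L_{p,\theta}$ and that the coefficient structure in Assumption~\ref{assum3.1} is compatible with reflection are both correct and used in the paper.
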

\begin{proof}
By considering $e^{-\varepsilon t}u$ instead of $u$ and then letting $\varepsilon \to 0$, we may assume that $\lambda>0$.
We take $\zeta\in C_0^\infty((-4,4)\times B_2)$ such that $\zeta\equiv 1$ in $Q_{5/3}$. It is easily seen that $v:=u\zeta\in \fH^2_{p,\theta}(\bR_0\times\bR^{d}_+)$ satisfies
\begin{equation}
                                        \label{eq8.14}
-v_t+L_0 v-\lambda v=-\zeta_t u+2 a^{ij}D_i \zeta D_j u+ a^{ij}D_{ij}\zeta u =: f
\end{equation}
in $\bR_0\times\bR^{d}_+$. Moreover, $f\in \bL_{p,\theta}(\bR_0\times\bR^{d}_+)$. This along with the fact that $f$ has a compact support implies $Mf\in \bL_{p,\theta_1}(\bR_0\times\bR^{d}_+)$ for any $\theta_1\ge \theta -p$. In particular, $M f \in \bL_{p,\theta}(\bR_0 \times \bR^d_+) \cap \bL_{p,d}(\bR_0 \times \bR^d_+)$ because $d \ge \theta - p$.  Then by Theorem \ref{thm1011} the function $v$ also belongs to $\fH^2_{p,d}(\bR_0\times\bR^d_+)$.
To see this, let $\tilde{v} \in \fH^2_{p,d}(\bR_0\times\bR^d_+)$ be a unique solution to \eqref{eq8.14}.
Let $f_n := f I_{x_1 \in (1/n,n)} \in L_p(\bR_0 \times \bR^d_+)$ and $v_n \in W_p^{1,2}(\bR_0 \times \bR^d_+)$ be a unique solution to
$$
-{v_n}_t + L_0 v_n - \lambda v_n = f_n,
\quad
v_n(t,0,x') = 0
$$
in $\bR_0 \times \bR^d_+$,
where, as in Remark \ref{rem12_9},
$v_n \in \fH_{p,\theta}^2(\bR_0 \times \bR^d_+) \cap \fH_{p,d}^2(\bR_0 \times \bR^d_+)$ because $Mf \in \bL_{p,\theta}(\bR_0 \times \bR^d_+) \cap \bL_{p,d}(\bR_0 \times \bR^d_+)$.
Since $M f_n \to M f$ in $\bL_{p,\theta}(\bR_0 \times \bR^d_+)$ and $v \in \fH^2_{p,\theta}(\bR_0\times\bR^d_+)$ is a unique solution to \eqref{eq8.14}, we have $v_n \to v$ in $\fH^2_{p,\theta}(\bR_0\times\bR^d_+)$.
Similarly, $v_n \to \tilde{v}$ in $\fH^2_{p,d}(\bR_0\times\bR^d_+)$.
Hence $v = \tilde{v}$ implying $v \in \fH_{p,d}^2(\bR_0 \times \bR^d_+)$ and $u \in \fH_{p,d}^2(Q_{5/3}^+)$.

Now with a different cutoff function $\eta \in C_0^\infty\left( (-25/9, 25/9) \times B_{5/3} \right)$ such that $\eta \equiv 1$ in $Q_{4/3}$, we define $w := u \eta \in \fH_{p,\theta}^2(\bR_0 \times \bR^d_+)$, which satisfies
$$
- w_t+L_0 w-\lambda w = -\eta_t u + 2a^{ij} D_i \eta D_j u + a^{ij} D_{ij} \eta u =:g
$$
in $\bR_0 \times \bR^d_+$.
As above, $M g \in \bL_{p,\theta}(\bR_0 \times \bR^d_+)$.
Moreover, by the definition of $\eta$ and the fact that $u \in \fH_{p,d}^2(Q_{5/3}^+)$, we see that $g \in L_p(\bR_0 \times \bR^d_+)$.
Therefore, by Remark \ref{rem12_9}, we have $w \in W_p^{1,2}(\bR_0 \times \bR^d_+)$, which implies that $u\in W^{1,2}_p(Q_{4/3}^+)$. Finally, \eqref{eq9.01} follows from Lemma \ref{lem4.1} together with the Sobolev embedding theorem and a standard bootstrap argument, and \eqref{eq9.17} is a simple consequence of \eqref{eq9.01} by using the Sobolev embedding theorem. The lemma is proved.
\end{proof}

For a domain $\cD\subset \bR^{d+1}_+$, denote $(u)_{\cD}$ to be the average of $u$ in $\cD$ with respect to the measure $\mu(dx\,dt)=x_1^{\theta-d}\,dx\,dt$.
Precisely,
$$
\left(u\right)_{\cD}
= \frac{1}{\mu(\cD)}\int_\cD u(t,x) \, \mu(dx \, dt),
\quad
\text{where}
\quad
\mu(\cD) = \int_{\cD}\, \mu(dx \, dt).
$$

\begin{proposition}
                            \label{prop4.3}
Let $p\in (1,\infty)$, $\lambda\ge 0$, $\theta\in (d-1,d-1+p)$, $\alpha\in (0,1)$, $r >0$, $\kappa\ge 32$, and $y_1\ge 0$. Assume that $f \in M^{-1}\bL_{p,\theta}\left(Q_{\kappa r}^+(y_1)\right)$, where $Q_{\kappa r}^+(y_1) = Q_{\kappa r}^+(0,y_1,0)$.
Let $u\in \fH^2_{p,\theta}\left(Q_{\kappa r}^+(y_1)\right)$ be a solution of
\begin{equation*}
-u_t+L_0u-\lambda u=f
\end{equation*}
in $Q_{\kappa r}^+(y_1)$. Then we have
\begin{align}
                                        \label{eq10.57}
\big(|Du-(Du)_{Q^+_r(y_1)}|^p\big)^{1/p}_{Q^+_r(y_1)}\le & N\kappa^{-\alpha}\big((\sqrt \lambda |u|+|Du|)^p )^{1/p}_{Q^+_{\kappa r}(y_1)}\nonumber\\
&\, +N\kappa^{(d+\theta+2)/p}\big(|Mf|^p\big)^{1/p}_{Q^+_{\kappa r}(y_1)},
\end{align}
where $N=N(d,\delta,p,\theta,\alpha)>0$ is a constant.
\end{proposition}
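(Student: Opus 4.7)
The plan is to execute a Campanato-type splitting of $u$ into a homogeneous part and an inhomogeneous part. Concretely, extend $f$ by zero outside $Q^+_{\kappa r}(y_1)$ and invoke the solvability part of Theorem \ref{thm1011} on the whole half space to produce $w \in \fH^2_{p,\theta}(-\infty,0)$ satisfying
$$
-w_t + L_0 w - \lambda w = f\,\chi_{Q^+_{\kappa r}(y_1)} \quad \text{in } \bR \times \bR^d_+,
$$
together with the a priori bound
$$
\sqrt{\lambda}\|w\|_{p,\theta} + \|Dw\|_{p,\theta} \le N \|Mf\|_{\bL_{p,\theta}(Q^+_{\kappa r}(y_1))},
$$
where the first summand follows from the full estimate \eqref{eq1002_2} via the interpolation $\sqrt{\lambda}\|w\|_{p,\theta}\le (\lambda\|Mw\|_{p,\theta})^{1/2}(\|M^{-1}w\|_{p,\theta})^{1/2}$. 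Then $v := u - w$ lies in $\fH^2_{p,\theta}(Q^+_{\kappa r}(y_1))$ and solves the homogeneous equation there. The triangle inequality
$$
(|Du - (Du)_{Q^+_r(y_1)}|^p)^{1/p}_{Q^+_r(y_1)} \le 2(|Dv - (Dv)_{Q^+_r(y_1)}|^p)^{1/p}_{Q^+_r(y_1)} + 2(|Dw|^p)^{1/p}_{Q^+_r(y_1)}
$$
reduces matters to bounding the two pieces separately.

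For the $w$-contribution, the a priori bound gives
$$
(|Dw|^p)^{1/p}_{Q^+_r(y_1)} \le \Bigl(\frac{\mu(Q^+_{\kappa r}(y_1))}{\mu(Q^+_r(y_1))}\Bigr)^{1/p} N\,(|Mf|^p)^{1/p}_{Q^+_{\kappa r}(y_1)}.
$$
An elementary computation of the weighted volume $\mu(Q^+_R(y_1))$, splitting $y_1$ into the ranges $y_1 < r$, $r\le y_1 \le \kappa r$, and $y_1 > \kappa r$, and using $\theta\in(d-1,d-1+p)$ (so that $x_1^{\theta-d}$ is integrable at the origin), shows that this ratio is uniformly bounded by $N\kappa^{d+\theta+2}$, producing the claimed factor $\kappa^{(d+\theta+2)/p}$.

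For the $v$-contribution I would split into two regimes. When $y_1 \ge 4\kappa r$ (interior case), $Q^+_{\kappa r}(y_1) = Q_{\kappa r}(y_1)$ is contained in $\{x_1 \sim y_1\}$, so the weight $x_1^{\theta-d}$ is uniformly comparable to $y_1^{\theta-d}$ there and weighted averages are equivalent to Lebesgue averages. After translating and rescaling by $\kappa r$, Lemma \ref{lem4.1b} (applied with $q>d+2$ chosen so that $1-(d+2)/q = \alpha$) yields
$$
[Dv]_{C^{\alpha/2, \alpha}(Q_{\kappa r/2}(y_1))} \le N(\kappa r)^{-\alpha - (d+2)/p}\bigl\|\sqrt{\lambda}|v| + |Dv|\bigr\|_{L_p(Q_{\kappa r}(y_1))}.
$$
Multiplying by $r^\alpha$ converts this into the required oscillation bound on $Q_r(y_1)$ with the $\kappa^{-\alpha}$ factor, and the weight comparison turns the right-hand side into the $\mu$-average $((\sqrt{\lambda}|v| + |Dv|)^p)^{1/p}_{Q^+_{\kappa r}(y_1)}$. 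When $y_1 < 4\kappa r$ (boundary case), I would run the same program using Lemma \ref{lem4.2} in place of Lemma \ref{lem4.1b}. Since that lemma yields a Hölder estimate only in terms of $\|v\|_{L_p(Q^+_2)}$ (without the $\sqrt{\lambda}|v|+|Dv|$ structure), the $\sqrt{\lambda}$-factor has to be reinstated by repeating the Agmon-type extension from the proof of Lemma \ref{lem4.1b}: one works with $\tilde{v}(t,x,y) = v(t,x)\cos(\sqrt{\lambda}\,y)$ and applies the half-space analogue of \eqref{eq9.01} in one extra $y$-dimension. In either subcase, the triangle inequality $\sqrt\lambda|v|+|Dv| \le \sqrt\lambda|u|+|Du|+\sqrt\lambda|w|+|Dw|$ replaces $v$ by $u$ on the right, and the extra $w$-contribution is absorbed by the a priori bound into the $Mf$ term already present.

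The main obstacle is the boundary subcase. Two things have to be handled with some care: first, the weighted measure ratio behaves differently in the three subregions of $y_1$ mentioned above, and one must check that the single exponent $(d+\theta+2)/p$ dominates uniformly; second, Lemma \ref{lem4.2} is not pre-loaded with the $\sqrt{\lambda}$-factor on the right, so the Agmon extension trick must be reproduced in the half-space geometry to upgrade it. Once both are in place, the proposition follows by summing the two contributions.
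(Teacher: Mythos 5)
Your overall plan -- the Campanato splitting $u=v+w$ with $w$ solving the frozen inhomogeneous problem globally and $v$ solving the homogeneous one locally, the volume-ratio bookkeeping for the $w$-part, and the interior/boundary dichotomy for the $v$-part -- coincides with the paper's proof (which first rescales to $\kappa r=8$ and splits at $y_1\in[0,1]$ versus $y_1>1$). The interior regime is handled as in the paper, the treatment of $w$ is sound, and the interpolation $\sqrt\lambda\|w\|_{p,\theta}\le(\lambda\|Mw\|_{p,\theta})^{1/2}\|M^{-1}w\|_{p,\theta}^{1/2}$ is a valid alternative to the paper's pointwise inequality $\lambda^{p/2}x_1^{\theta-d}\le\lambda^p x_1^{\theta-d+p}+x_1^{\theta-d-p}$. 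The volume-ratio exponent you claim, $\kappa^{d+\theta+2}$, is an admissible (if not sharp) bound; the paper obtains $\kappa^{\nu+2}$, $\nu=\max\{d,\theta\}\le d+\theta$.

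However, the boundary regime as you describe it does not close, and the reason is that the Agmon extension cannot ``reinstate'' the $\sqrt\lambda$-factor there. The trick works in Lemma~\ref{lem4.1b} because the $\lambda=0$ interior estimate already has $\|Dv\|_{L_p}$ on the right (one subtracts the average of $v$, which is still a solution), and then $D\tilde v$ for $\tilde v=v(t,x)\cos(\sqrt\lambda y)$ produces both $\cos(\sqrt\lambda y)D_xv$ and $-\sqrt\lambda\sin(\sqrt\lambda y)v$, giving the $\sqrt\lambda|v|+|Dv|$ structure. In the boundary case of Lemma~\ref{lem4.2} the Dirichlet condition forbids subtracting a constant, so the starting estimate is $[Dv]_{C^{\alpha/2,\alpha}(Q_1^+)}\le N\|v\|_{L_p(Q_2^+)}$. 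Applying the half-space version of~\eqref{eq9.01} to $\tilde v$ then gives $[D\tilde v]_{C^{\alpha/2,\alpha}}\le N\|\tilde v\|_{L_p}$, and since $\|\tilde v\|_{L_p}\sim\|v\|_{L_p}$ you recover exactly what Lemma~\ref{lem4.2} already says; the $\sqrt\lambda|v|+|Dv|$ structure never appears. So the step as planned is vacuous, and you would be left with $\|v\|_{L_p(Q_{\kappa r}^+)}$ on the right with no way to bound it by $\sqrt\lambda|u|+|Du|$.

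The correct observation is that near the boundary no $\sqrt\lambda$-factor is needed: instead, the paper bounds $\|v\|_{L_p(Q_4^+)}\le N\|M^{-1}v\|_{\bL_{p,\theta}(Q_4^+)}$ (using $x_1^{\theta-d-p}\ge 4^{\theta-d-p}$ for $x_1\le 4$, valid since $\theta<d-1+p$ gives $\theta-d-p<-1<0$) and then $\|M^{-1}v\|_{\bL_{p,\theta}(Q_4^+)}\le N\|D_1v\|_{\bL_{p,\theta}(Q_4^+)}$ by the weighted Hardy inequality exploiting the Dirichlet boundary condition. This yields a bound in terms of $|Du|$ alone, which is \emph{stronger} than~\eqref{eq10.57}. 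Replace the Agmon step in the boundary regime by this Hardy argument; as written, your boundary case does not go through.
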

\begin{proof}
Dilations show that it suffices to prove the lemma only for $\kappa r = 8$. We consider two cases.

{\em Case 1:} $y_1\in [0,1]$. Since $r=8/\kappa\le 1/4$, we have
\begin{equation*}
Q_r^+(y_1)\subset Q_2^+\subset Q_4^+\subset Q^+_{\kappa r}(y_1).
\end{equation*}
Thanks to Theorem \ref{thm1011}, there is a unique solution $w\in \fH^2_{p,\theta}(\bR_0\times \bR^d_+)$ to the equation
$$
-w_t+L_0w-\lambda u=f I_{Q_4^+}
$$
in $\bR_0\times \bR^d_+$, and we have
\begin{equation}
                            \label{eq11.11}
\|Dw\|_{\bL_{p,\theta}(\bR_0\times \bR^d_+)}
\le N\|MfI_{Q_4}\|_{\bL_{p,\theta}(\bR_0\times \bR^d_+)}
= N\|Mf\|_{\bL_{p,\theta}(Q_4^+)}.
\end{equation}
Denote $v=u-w\in \fH^2_{p,\theta}(Q_4^+)$, which satisfies
$-v_t+L_0v-\lambda v=0$
in $Q_4^+$. Applying Lemma \ref{lem4.2} to $v$ with a scaling, we get
\begin{align}
                                \label{eq11.35}
&\big(|Dv-(Dv)_{Q^+_r(y_1)}|^p\big)^{1/p}_{Q^+_r(y_1)}
\le Nr^{\alpha}
[Dv]_{C^{\alpha/2,\alpha}(Q^+_2)}\le Nr^{\alpha}\|v\|_{L_p(Q_4^+)}
\nonumber\\
&\,\le N  r^\alpha \|M^{-1}v\|_{\bL_{p,\theta}(Q_4^+)}\le Nr^{\alpha}\|D_1v\|_{\bL_{p,\theta}(Q_4^+)},
\end{align}
where in the third inequality we used the fact that $\theta-d-p<0$ and in the last inequality we used Hardy's inequality.
Combining \eqref{eq11.11} and \eqref{eq11.35}, and using the triangle inequality and the fact $\mu\left(Q_r^+(y_1)\right)\ge N r^{\nu+2}$, $\nu = \max\{d,\theta\}$, we reach
\begin{align*}
&\big(|Du-(Du)_{Q^+_r(y_1)}|^p\big)^{1/p}_{Q^+_r(y_1)}\\
&\,\le N\big(|Dv-(Dv)_{Q^+_r(y_1)}|^p\big)^{1/p}_{Q^+_r(y_1)}
+N\big(|Dw|^p\big)^{1/p}_{Q^+_r(y_1)}\\
&\,\le Nr^{\alpha} \big(|D_1 v|^p\big)^{1/p}_{Q^+_4}+Nr^{-(\nu+2)/p}\big(|Mf|^p \big)^{1/p}_{Q^+_4}\\
&\,\le Nr^{\alpha} \big(|D_1 u|^p\big)^{1/p}_{Q^+_4}+Nr^{-(\nu+2)/p}\big(|Mf|^p \big)^{1/p}_{Q^+_4}.
\end{align*}
Recalling that $r=8/\kappa$ and $Q_4^+\subset Q^+_{\kappa r}(y_1)$, we obtain \eqref{eq10.57} in this case.

{\em Case 2:} $y_1>1$. Since $r=8/\kappa\le 1/4$, we have
\begin{equation*}
Q_r^+(y_1)=Q_r(y_1)\subset Q_{1/4}(y_1)\subset Q_{1/2}(y_1) \subset Q^+_{\kappa r}(y_1).
\end{equation*}
Since $x_1\in (y_1-1/2, y_1+1/2)$ in $Q_{1/2}(y_1)$, the weighted average is comparable to the average without weights.
In particular,
\begin{equation}
							\label{eq1213_1}
N_1 \le \frac{x_1^{\theta-d}}{\mu\left(Q_{1/2}(y_1)\right)} \le N_2
\end{equation}
for $y_1 > 1$ and $x_1 \in (y_1 - 1/2, y_1 + 1/2)$, where $N_{1,2} = N_{1,2}(d,\theta)$.
As before, thanks to Theorem \ref{thm1011}, there is a unique solution $w\in \fH^2_{p,\theta}(\bR_0\times \bR^d_+)$ to the equation
$$
-w_t+L_0 w-\lambda w=f I_{Q_{1/2}(y_1)}
$$
in $\bR_0\times \bR^d_+$, which satisfies the estimate \eqref{eq1002_2} with $w$ and $fI_{Q_{1/2}(y_1)}$ in place of $u$ and $f$, respectively.
This estimate along with the inequality
$$
\lambda^{p/2} x_1^{\theta-d} \le \lambda^p x_1^{\theta-d+p} + x_1^{\theta-d-p}
$$
for all $\lambda \ge 0$ and $x_1 > 0$ shows that
\begin{equation}
                            \label{eq11.11c}
\big\|\sqrt \lambda |w|+|Dw|\big\|_{\bL_{p,\theta}(\bR_0\times \bR^d_+)}
\le N\|Mf\|_{\bL_{p,\theta}\left(Q_{1/2}(y_1)\right)}.
\end{equation}
Denote $v=u-w\in \fH^2_{p,\theta}(\bR_0\times \bR^d_+)$, which satisfies
$-v_t+L_0v-\lambda v=0$
in $Q_{1/2}(y_1)$. Applying Lemma \ref{lem4.1b} with a scaling to $v$, we get
\begin{align}
                                \label{eq11.35c}
&\big(|Dv-(Dv)_{Q^+_r(y_1)}|^p\big)^{1/p}_{Q^+_r(y_1)}\le Nr^{\alpha}[Dv]_{C^{\alpha/2,\alpha}\left(Q_{1/4}(y_1)\right)}\nonumber\\
&\,\le Nr^{\alpha} \big((\sqrt \lambda |v|+|Dv|)^p\big)^{1/p}_{Q_{1/2}(y_1)},
\end{align}
where for the last inequality we used \eqref{eq1213_1}.
Combining \eqref{eq11.11c} and \eqref{eq11.35c}, and using  the fact
$\mu\left(Q_r^+(y_1)\right)\ge N r^{d+2}\mu\left(Q_8^+(y_1)\right)$, we get \eqref{eq10.57} as in the first case. The proposition is proved.
\end{proof}

\subsection{Divergence type equations}

In this subsection, we shall establish analogous oscillation estimates for divergence type equations. The argument is more involved due to the lack of regularity of $Du$ (or $u/x_1$) with respect to $x_1$ near the boundary.

By localizing the $L_p$-estimates established in \cite{MR2764911} and using odd/even extensions, we get the following lemma (see the proof of Lemma \ref{lem1010}).

\begin{lemma}
                            \label{lem4.1d}
Let $p\in (1,\infty)$ and $\lambda\ge 0$. Assume that $u\in \cH^{1}_p(Q_2)$ satisfies $-u_t+\cL_0u-\lambda u=0$ in $Q_2$. Then there exists a constant $N=N(d,\delta,p)$ such that
$$
\|u\|_{\cH^{1}_p(Q_1)}\le N\|u\|_{L_p(Q_2)}.
$$
The same result holds if we replace $Q$ by $Q^+$ and assume $u=0$ on $\{x_1=0\}$.
\end{lemma}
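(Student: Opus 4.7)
The plan is to mimic the proof of Lemma \ref{lem1010} (which establishes the analogous local estimate on slabs) with cutoff functions in all space-time variables, and then reduce the half-space case to the interior case via odd reflection across $\{x_1 = 0\}$.

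For the interior case, I would introduce an increasing sequence of cutoffs $\zeta_n \in C_0^\infty((-4,0)\times B_2)$ with $\zeta_n \equiv 1$ on $Q_{r_n}$, where $r_n = 1 + (1-2^{-n})$, and $|D\zeta_n| + |\partial_t\zeta_n|^{1/2} \le N 2^n / (R-r)$. Setting $v_n = u\zeta_n \in \cH_p^1(\bR\times\bR^d)$ (extended by zero), a routine computation using $-u_t + \cL_0 u - \lambda u = 0$ gives
$$
-\partial_t v_n + \cL_0 v_n - (\lambda+\lambda_n) v_n = D_i g_{n,i} + f_n - \lambda_n v_n,
$$
with $g_{n,i} = a^{ij} u \, D_j\zeta_n$ and $f_n = a^{ij} D_j u \, D_i\zeta_n - u\,\partial_t\zeta_n$, where $\lambda_n \ge 0$ is an auxiliary spectral shift. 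Applying the global $L_p$-estimate from Theorem 5.1 of \cite{MR2764911} (valid for coefficients satisfying Assumption \ref{assum3.1}) to each $v_n$ yields
$$
\sqrt{\lambda+\lambda_n}\,\|v_n\|_p + \|Dv_n\|_p \le N\|g_{n}\|_p + \frac{N}{\sqrt{\lambda+\lambda_n}}\|f_n\|_p + N\sqrt{\lambda_n}\|v_n\|_p.
$$
Since $f_n$ contains $Du$ on a slightly larger support, it is bounded by $\|Dv_{n+1}\|_p$ up to lower-order $u$-terms.

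Following verbatim the bookkeeping in Lemma \ref{lem1010}, I would then multiply by $\varepsilon^n$, choose $\sqrt{\lambda_n}\sim 2^n$ and $\varepsilon$ small (e.g.\ $\varepsilon = 1/8$) so that the tail $\sum_n (2\varepsilon)^{n-1}/\sqrt{\lambda_{n-1}}\,\|Dv_n\|_p$ is finite and absorbed into the left-hand side, and finally sum over $n$. This produces
$$
\|Du\|_{L_p(Q_1)} + \|u\|_{L_p(Q_1)} \le N\|u\|_{L_p(Q_2)},
$$
and the $\bH_p^{-1}(Q_1)$-bound on $u_t$ then follows directly from the equation $u_t = D_i(a^{ij}D_j u) - \lambda u$.

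For the half-space case with $u=0$ on $\{x_1=0\}$, I would extend $u$ oddly in $x_1$ to all of $Q_2$ and extend the coefficients with compatible parities: $a^{11}$ evenly in $x_1$ (trivial when $a^{11}=a^{11}(t)$, automatic when $a^{11}=a^{11}(x_1)$), $a^{1j}$ and $a^{j1}$ oddly for $j>1$, and $a^{jk}$ evenly for $j,k>1$. A quick parity check shows the extended function solves the extended equation in $Q_2$, the extended coefficients still satisfy Assumption \ref{assum3.1}, and the extension belongs to $\cH_p^1(Q_2)$ because the odd/even matching ensures no distributional trace contribution on $\{x_1=0\}$. Applying the interior case to the extension and restricting to $Q_1^+$ completes the proof. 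The main obstacle is the iteration/absorption bookkeeping (already handled in Lemma \ref{lem1010}); the extension argument is then routine once the parities are verified.
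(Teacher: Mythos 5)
Your proof is correct and follows the same route the paper indicates: localize the $\cH_p^1$-estimates of \cite{MR2764911} by the iteration/absorption scheme of Lemma \ref{lem1010} (adapted to cutoffs in $(t,x)$ rather than in $x_1$ alone, as is appropriate for parabolic cylinders rather than slabs), and then handle the half-space case by odd reflection of $u$ together with the compatible parities for $a^{ij}$. The parity bookkeeping you give is the right one, and it preserves both Assumption \ref{assum3.1} and uniform ellipticity, so the reduction to the interior case goes through.
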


The next lemma will be used when the center of the cylinder is away from the boundary. It follows from Lemma \ref{lem4.1d} by using the Sobolev embedding theorem, a standard bootstrap argument, and finite-difference approximations.

\begin{lemma}
                            \label{lem4.1bd}
Let $1<p\le q<\infty$, and $\lambda\ge 0$. Assume that $u\in \cH^{1}_{p}(Q_2)$ satisfies
$-u_t+\cL_0u-\lambda u=0$ in $Q_2$.
Then $u\in \cH^{1}_q(Q_1)$ and there exists a constant $N=N(d,\delta,p,q)$ such that
\begin{equation*}
\|u\|_{\cH^{1}_q(Q_1)}\le N\|u\|_{L_p(Q_2)}.
\end{equation*}
Moreover, $D_{x'}u\in \cH^{1}_q(Q_1)$ and for $q>d+2$ we have
\begin{equation*}
\|D_{x'}u\|_{C^{\alpha/2,\alpha}(Q_1)}\le N\|D_{x'}u\|_{L_p(Q_2)},
\end{equation*}
where $\alpha=1-(d+2)/q\in (0,1)$ and $N=N(d,p,q,\delta)$.
\end{lemma}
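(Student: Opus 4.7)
The essential observation is that $a^{ij}=a^{ij}(t,x_1)$ is independent of the tangential variable $x'$, so the operator $-\partial_t+\cL_0-\lambda$ commutes with any translation in the $x'$-direction. My plan is to bootstrap three things in tandem: (i) tangential differentiability of $u$, via finite-difference quotients in $x'$; (ii) integrability, via Sobolev embedding in $\bR^{d-1}_{x'}$; and (iii) the full gradient control, via Lemma~\ref{lem4.1d} on successively smaller cylinders.

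For (i), fix $j\in\{2,\dots,d\}$ and small $h\ne 0$, and set $\delta^h_j u(t,x):=h^{-1}\bigl(u(t,x_1,x'+he_j)-u(t,x_1,x')\bigr)$. Because $a^{ij}$ does not depend on $x_j$, the function $\delta^h_j u$ satisfies $-(\delta^h_j u)_t+\cL_0\delta^h_j u-\lambda\delta^h_j u=0$ on a cylinder slightly smaller than $Q_2$. Applying Lemma~\ref{lem4.1d} on nested cylinders $Q_{r_1}\subset Q_{r_2}\subset Q_2$ (after a harmless translation and rescaling) gives
\begin{equation*}
\|\delta^h_j u\|_{\cH^1_p(Q_{r_1})}\le N\|\delta^h_j u\|_{L_p(Q_{r_2})}\le N\|D_{x'}u\|_{L_p(Q_{r_2})}
\end{equation*}
uniformly in $h$. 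Letting $h\to0$ yields $D_j u\in\cH^1_p(Q_{r_1})$, and iterating the argument produces $\|D^\beta_{x'}u\|_{\cH^1_p(Q_{\rho_{|\beta|}})}\le N_{|\beta|}\|u\|_{L_p(Q_2)}$ for every multi-index $\beta$ with $\beta_1=0$ and some $\rho_{|\beta|}\in(1,2)$.

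For (ii)--(iii), with arbitrarily many tangential derivatives of $u$ in $\cH^1_p$, slice-wise Sobolev embedding in $\bR^{d-1}_{x'}$ (namely $W^{k,p}_{x'}(B'_r)\hookrightarrow L^{\tilde p}_{x'}(B'_r)$ for $k$ large and some $\tilde p>p$) combined with a parabolic-Sobolev/interpolation argument upgrades $u$ and $Du$ to $L^{\tilde p}$ on a slightly smaller cylinder; feeding this back into Lemma~\ref{lem4.1d} with $\tilde p$ replacing $p$ yields $u\in\cH^1_{\tilde p}$. Iterating steps (i) and (ii)--(iii) in tandem on shrinking cylinders reaches any $q\ge p$, and a finite covering argument then produces $\|u\|_{\cH^1_q(Q_1)}\le N\|u\|_{L_p(Q_2)}$.

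For the Hölder estimate with $q>d+2$, I apply the above bootstrap to $v:=D_{x'}u$, which also satisfies $-v_t+\cL_0 v-\lambda v=0$, to obtain $D^\beta_{x'}v\in\cH^1_q(Q_1)$ for every $\beta$ with $\beta_1=0$. Morrey embedding in $\bR^{d-1}_{x'}$ then yields Hölder continuity of $v$ in $x'$ with exponent $\alpha=1-(d+2)/q$; the joint parabolic Hölder continuity of $v$ in $(t,x_1,x')$ follows by combining this with an oscillation estimate in $(t,x_1)$ extracted from the equation for $v$ and the $L^q$-bound on $Dv$. The main obstacle is precisely this last step: since $a^{11}$ is merely measurable in one of $t,x_1$, one cannot control $D_1^2 u$ or $u_t$ in $L^q$, which is why the Hölder statement is stated for $D_{x'}u$ (which benefits from the tangential bootstrap) rather than for $Du$, and why converting tangential regularity into $(t,x_1)$-regularity requires exploiting the parabolic structure of the equation satisfied by $v$ rather than slice-by-slice Sobolev embeddings alone.
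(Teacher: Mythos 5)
The overall shape of your argument — finite differences in $x'$ using the $x'$-independence of $a^{ij}$, Sobolev embedding, and a bootstrap built on Lemma~\ref{lem4.1d} — is the same outline the paper gestures at. But the central step is not correctly executed. In step (ii)--(iii) you claim that slice-wise Sobolev embedding in $\bR^{d-1}_{x'}$, ``combined with a parabolic-Sobolev/interpolation argument,'' upgrades $u$ \emph{and} $Du$ to $L^{\tilde p}$ on a smaller cylinder, and then that ``feeding this back into Lemma~\ref{lem4.1d} with $\tilde p$ replacing $p$ yields $u\in\cH^1_{\tilde p}$.'' Neither clause holds as stated. Knowing $D^\beta_{x'}Du\in L_p$ for all tangential $\beta$ gives only a mixed-norm bound $Du\in L^p_{(t,x_1)}L^{\tilde p}_{x'}$; it does not improve the $(t,x_1)$-integrability of $Du$ (in particular of $D_1u$), and no interpolation argument you indicate will get from $L^p_{(t,x_1)}L^{\tilde p}_{x'}$ to $L^{\tilde p}$ of the full cylinder. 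Moreover, Lemma~\ref{lem4.1d} is an \emph{a priori} estimate whose hypothesis is $u\in\cH^1_{\tilde p}(Q_2)$; it cannot be used to \emph{establish} $\cH^1_{\tilde p}$ membership from an $L_{\tilde p}$ bound alone. The mechanism the paper relies on (spelled out in the proof of Lemma~\ref{lem4.2d}, which Lemma~\ref{lem4.1bd} parallels) is different: one uses the parabolic embedding $\cH^1_p\hookrightarrow L_{p^*}$ with $1/p^*=1/p-1/(d+2)$ together with the \emph{$L_{p^*}$-solvability and uniqueness theory} for $-\partial_t+\cL_0-\lambda$ (from \cite{MR2764911}) applied to a cut-off $v=u\zeta$; the right-hand side $D_ig_i+f$ lies in $\bH^{-1}_{p^*}$ because $g\in L_{p^*}$ and $L_p\hookrightarrow\bH^{-1}_{p^*}$, so the $\cH^1_{p^*}$ solution with this data exists, is identified with $v$ by uniqueness in $\cH^1_p$, and hence $u\in\cH^1_{p^*}$ on a smaller cylinder. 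Iterating this reaches any $q$. Your tangential finite differences are genuinely needed, but only to obtain $D_{x'}u\in\cH^1_p$ so that the first part of the lemma can be applied to $D_{x'}u$.

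Your last paragraph on the H\"older estimate is also more complicated than necessary and leaves a gap: you try to get H\"older continuity in $x'$ by Morrey in $x'$ alone and then ``combine this with an oscillation estimate in $(t,x_1)$ extracted from the equation,'' without saying what that estimate is. The intended route is direct: once $\|D_{x'}u\|_{\cH^1_q(Q_1)}\le N\|D_{x'}u\|_{L_p(Q_2)}$ is established (first part applied to $D_{x'}u$), the parabolic Sobolev--Morrey embedding $\cH^1_q(Q_1)\hookrightarrow C^{\alpha/2,\alpha}(Q_1)$ for $q>d+2$, $\alpha=1-(d+2)/q$, gives the stated H\"older bound in all variables at once. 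Your final remark that one cannot control $D_1^2u$ or $u_t$ in $L_q$ and that this is why the estimate is phrased for $D_{x'}u$ is a fair observation, but it does not repair the earlier gap in the integrability bootstrap.
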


The lemma below will be used when the center of the cylinder is close to the boundary.

\begin{lemma}
                            \label{lem4.2d}
Let $1<p\le q<\infty$, $\theta\in (d-1,d-1+p)$, and $\lambda\ge 0$. Assume that $u\in \cH^{1,\lambda}_{p,\theta}(Q_2^+)$ satisfies
$-u_t+\cL_0 u-\lambda u=0$ in $Q_2^+$.
Then $u\in \cH^{1}_q(Q_1^+)$ and there exists a constant $N=N(d,\delta,p,q)$ such that
\begin{equation}
                            \label{eq9.01d}
\|u\|_{\cH^{1}_q(Q_1^+)}\le N\|u\|_{L_p(Q_2^+)}.
\end{equation}
Furthermore, $D_{x'}u\in \cH^1_q(Q_1^+)$ and if $q>d+2$ we have
\begin{equation}
                            \label{eq9.17d}
\|D_{x'}u\|_{C^{\alpha/2,\alpha}(Q^+_1)}\le N\|u\|_{L_p(Q^+_2)},
\end{equation}
where $\alpha=1-(d+2)/q\in (0,1)$.
\end{lemma}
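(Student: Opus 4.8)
The plan is to mimic the proof of Lemma~\ref{lem4.2}, with the non-divergence ingredients replaced by their divergence counterparts: Theorem~\ref{thm1011_1} and Remark~\ref{rem12_14} in place of Theorem~\ref{thm1011} and Remark~\ref{rem12_9}, and Lemmas~\ref{lem4.1d}--\ref{lem4.1bd} in place of Lemmas~\ref{lem4.1}--\ref{lem4.1b}. The essential new feature is that only the tangential gradient $D_{x'}u$ is regular up to $\{x_1=0\}$, so the bootstrap must be run on $D_{x'}u$ rather than on $Du$.

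First, replacing $u$ by $e^{-\varepsilon t}u$ and letting $\varepsilon\downarrow 0$, we may assume $\lambda>0$. I would then localize: choosing a smooth cutoff $\zeta$ with $\zeta\equiv1$ on $Q^+_{5/3}$ and $\operatorname{supp}\zeta\subset Q^+_2$, the function $v:=u\zeta\in\cH^{1,\lambda}_{p,\theta}(\bR_0\times\bR^d_+)$ solves, weakly on $\bR_0\times\bR^d_+$,
$$
-v_t+\cL_0 v-\lambda v=D_i\bigl(a^{ij}u\,D_j\zeta\bigr)+a^{ij}(D_i\zeta)(D_ju)-u\zeta_t .
$$
Since $M^{-1}u\in\bL_{p,\theta}(Q^+_2)$ and $\theta-d-p<-1$, every term proportional to $u$ has compact support and lies in $\bL_{p,\theta}\cap L_p$. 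The term $a^{ij}(D_i\zeta)(D_ju)$ is the delicate one. Its tangential part ($j\ge2$) I would rewrite, using that under Assumption~\ref{assum3.1} the coefficients are independent of $x'$ (so $D_ja^{ij}=0$ for $j\ge2$), as $D_j\bigl(a^{ij}(D_i\zeta)u\bigr)-u\,a^{ij}D_{ij}\zeta$, i.e.\ as divergence data plus an $L_p\cap\bL_{p,\theta}$ term proportional to $u$. Its normal part ($j=1$) I would handle via the equation itself: writing $h^i:=a^{ij}D_ju$ so that $\sum_iD_ih^i=u_t+\lambda u$ weakly, and expressing the $D_1u$ contributions through the $h^i$ and the tangential derivatives $D_ju$ ($j\ge2$), an integration by parts then moves them --- modulo lower-order terms proportional to $u$ --- onto divergence data. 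In this way the right-hand side of the equation for $v$ is put in the form $D_iG_i+F$ with $G_i,F\in\bL_{p,\theta}(\bR_0\times\bR^d_+)\cap L_p(\bR_0\times\bR^d_+)$ of compact support. The $\cH^1_p$-solvability for coefficients as in Assumption~\ref{assum3.1} (see~\cite{MR2764911}) then furnishes a solution $\tilde v\in\cH^1_p(\bR_0\times\bR^d_+)$ with $\tilde v(t,0,x')=0$; by Remark~\ref{rem12_14} it lies in $\cH^{1,\lambda}_{p,\theta}(\bR_0\times\bR^d_+)$, and by uniqueness there (Theorem~\ref{thm1011_1}) $v=\tilde v$. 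Hence $u\in\cH^1_p(Q^+_{4/3})$.

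Once $u\in\cH^1_p(Q^+_{4/3})$ solves the homogeneous equation with zero boundary value, Lemma~\ref{lem4.1d}, together with the parabolic Sobolev embedding and a standard bootstrap on the integrability exponent (shrinking the cylinders slightly at each step), yields $u\in\cH^1_q(Q^+_1)$ and~\eqref{eq9.01d}. For $D_{x'}u$, I would use that the $x'$-difference quotients of $u$ solve the same homogeneous equation with the same boundary condition in a slightly smaller cylinder (the coefficients being independent of $x'$); applying the estimate just obtained to them, letting the increment tend to $0$, and iterating gives $D_{x'}u\in\cH^1_q(Q^+_1)$ with the bound by $\|u\|_{L_p(Q^+_2)}$. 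Finally, for $q>d+2$, \eqref{eq9.17d} follows by applying \eqref{eq9.01d} to $D_{x'}u$ and invoking the parabolic Sobolev embedding theorem.

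The step I expect to be the main obstacle is the first one: transferring $u$ from the weighted space $\cH^{1,\lambda}_{p,\theta}$ to the unweighted $\cH^1_p$ near the boundary, i.e.\ showing that the right-hand side of the localized equation lies in $L_p$ and not merely in $\bL_{p,\theta}$. When $\theta\le d$ this is automatic, since the weight $x_1^{\theta-d}$ is then bounded below on the support of the cutoff; when $\theta>d$ it is precisely here that one must exploit the $x'$-independence of the coefficients to convert the tangential first-order terms into divergence data, and the equation together with the extra decay $M^{-1}u\in\bL_{p,\theta}$ to control the normal first-order term $a^{i1}D_i\zeta\,D_1u$. Granting this transfer, the remainder is a routine adaptation of the proof of Lemma~\ref{lem4.2} and of the finite-difference argument behind Lemma~\ref{lem4.1bd}.
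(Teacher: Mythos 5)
Your overall architecture is the paper's: reduce to $\lambda>0$, localize with a cutoff, transfer $v=u\zeta$ from $\cH^{1,\lambda}_{p,\theta}$ to $\cH^1_p$ by producing an unweighted solution with zero boundary data and matching it with $v$ through uniqueness (Remark \ref{rem12_14} and Theorem \ref{thm1011_1}), then bootstrap with Lemma \ref{lem4.1d} and treat $D_{x'}u$ by difference quotients. You also correctly isolate the crux: when $\theta>d$ the term $a^{ij}D_i\zeta\,D_ju$ in the localized right-hand side lies only in $\bL_{p,\theta}$, not in $L_p$. However, your proposed resolution of that crux does not go through for the normal component $\sum_i a^{i1}D_i\zeta\,D_1u$. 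You cannot integrate by parts in $x_1$ (the $a^{i1}$ are merely measurable in $x_1$), and the substitution $D_1u=(a^{11})^{-1}\bigl(h^1-\sum_{j\ge2}a^{1j}D_ju\bigr)$ with $h^i:=a^{ij}D_ju$ only trades $D_1u$ for the single component $h^1$; the equation controls the full divergence $\sum_iD_ih^i$, not $h^1$ multiplied by a cutoff derivative, so there is no integration by parts that converts this product into divergence data plus terms proportional to $u$. As sketched, this step fails.

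The paper sidesteps the issue: it never tries to put $f=-u\zeta_t+a^{ij}D_i\zeta\,D_ju$ into $L_p$. Instead it observes that $f$ has compact support and $Mf\in\bL_{p,\theta_1}$ for every $\theta_1\ge\theta-p$ --- in particular for $\theta_1=d$, since $\int x_1^{p}|Du|^p\,dx\,dt<\infty$ locally follows from $Du\in\bL_{p,\theta}$ and $p>\theta-d$ --- and then invokes the representation \eqref{eq0102_5} (Remark 5.3 of \cite{MR1708104}): any element of $M^{-1}\bH^{-1}_{p,\theta_1}$ equals $D_ih_i$ with $h\in\bL_{p,\theta_1}$. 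Applied at $\theta_1=\theta$ and $\theta_1=d$ simultaneously, this rewrites the entire right-hand side as $D_ih_i$ with $h\in\bL_{p,\theta}\cap L_p(\bR_0\times\bR^d_+)$, after which your uniqueness-matching step applies verbatim. If you replace your integration-by-parts manipulations with this representation, the remainder of your argument is correct and coincides with the paper's proof.
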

\begin{proof}
The proof is similar to that of Lemma \ref{lem4.2}. As there, we may assume that $\lambda>0$. We take $\zeta\in C_0^\infty((-4,4)\times B_2)$ such that $\zeta\equiv 1$ in $Q_{5/3}$. It is easily seen that $v:=u\zeta\in \cH^{1,\lambda}_{p,\theta}(\bR_0\times\bR^{d}_+)$ satisfies
\begin{equation*}
-v_t+\cL_0 v-\lambda v=D_i g_i+f
\end{equation*}
in $\bR_0\times\bR^{d}_+$, where
$$
g_i=a^{ij} u D_j \zeta\in \bL_{p,\theta_1}(\bR_0\times\bR^{d}_+),\quad f=-u\zeta_t+a^{ij}D_i\zeta D_j u \in M^{-1}\bL_{p,\theta_1}(\bR_0\times\bR^{d}_+)
$$
for any $\theta_1\ge \theta -p$.
In particular,
$g, Mf \in \bL_{p,\theta} \cap \bL_{p,d}(\bR_0 \times \bR^d_+)$
and by Remark 5.3 in \cite{MR1708104}, we have $h = (h_1, \ldots, h_d) \in \bL_{p,\theta} \cap \bL_{p,d} \left(\bR_0 \times \bR^d_+\right)$ such that
$$
D_i h_i = D_i g_i + f.
$$
Then we see that $h \in L_p\left(\bR_0 \times \bR^d_+\right)$ and, as shown in Remark \ref{rem12_14}, we have $v \in \cH_p^1\left(\bR_0 \times \bR^d_+\right)$.
Hence $u \in \cH_p^1(Q^+_{5/3})$.
Then \eqref{eq9.01d} follows from Lemma \ref{lem4.1d} together with the Sobolev embedding theorem and a standard bootstrap argument. Finally, \eqref{eq9.17d} is a simple consequence of \eqref{eq9.01d} by noting that $D_{x'}u$ satisfies the same equation as $u$ and using the Sobolev embedding theorem. The lemma is proved.
\end{proof}

\begin{proposition}
                            \label{prop4.3d}
Let $p\in (1,\infty)$, $\lambda\ge 0$, $\theta\in (d-1,d-1+p)$, $r > 0$, $\kappa\ge 32$, $y_1 \ge 0$, and $\alpha\in (0,1)$.
Assume that $g, f\in \bL_{p,\theta}\left(Q_{\kappa r}^+(y_1)\right)$ and $f \equiv 0$ if $\lambda = 0$. Let $u\in \cH^{1,\lambda}_{p,\theta}\left(Q_{\kappa r}^+(y_1)\right)$ be a solution of
\begin{equation*}
-u_t+\cL_0 u-\lambda u=D_i g_i+f
\end{equation*}
in $Q_{\kappa r}^+(y_1)$. Then we have
\begin{align*}
&\big(|D_{x'}u-(D_{x'}u)_{Q^+_r(y_1)}|^p\big)^{1/p}_{Q^+_r(y_1)}
\le N\kappa^{-\alpha}\big(|Du|^p\big)^{1/p}_{Q^+_{\kappa r}(y_1)}\nonumber\\
&\, +N\kappa^{(d+\theta+2)/p}\left( |g|^p + \lambda^{-\frac p 2} |f|^p \right)^{1/p}_{Q^+_{\kappa r}(y_1)},
\end{align*}
where $N=N(d,\delta,p,\theta,\alpha)>0$ is a constant.
\end{proposition}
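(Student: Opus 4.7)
The plan mirrors the proof of Proposition \ref{prop4.3}, swapping the non-divergence tools for their divergence counterparts: Theorem \ref{thm1011_1} supplies the weighted a priori estimate for an auxiliary inhomogeneous problem, while Lemma \ref{lem4.2d} provides a boundary Hölder estimate for $D_{x'}v$ near $\{x_1=0\}$ and Lemma \ref{lem4.1bd} handles the interior situation. By scaling it suffices to treat $\kappa r = 8$, and we then split according to whether $y_1 \in [0,1]$ or $y_1 > 1$, exactly as in the non-divergence case.

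For $y_1 \in [0,1]$ we have $Q_r^+(y_1) \subset Q_4^+ \subset Q_{\kappa r}^+(y_1)$. Apply Theorem \ref{thm1011_1} to produce $w \in \cH^{1,\lambda}_{p,\theta}(\bR_0 \times \bR^d_+)$ solving
$$
-w_t + \cL_0 w - \lambda w = D_i(g_i I_{Q_4^+}) + f I_{Q_4^+}
$$
with
$$
\sqrt{\lambda}\|w\|_{p,\theta} + \|M^{-1}w\|_{p,\theta} + \|Dw\|_{p,\theta}
\le N\bigl(\|g\|_{\bL_{p,\theta}(Q_4^+)} + \lambda^{-1/2}\|f\|_{\bL_{p,\theta}(Q_4^+)}\bigr).
$$
Then $v = u - w$ satisfies the homogeneous divergence equation in $Q_4^+$, and Lemma \ref{lem4.2d} (after rescaling) gives
$$
[D_{x'}v]_{C^{\alpha/2,\alpha}(Q_2^+)} \le N\|v\|_{L_p(Q_4^+)} \le N\|M^{-1}v\|_{\bL_{p,\theta}(Q_4^+)} \le N\|D_1 v\|_{\bL_{p,\theta}(Q_4^+)},
$$
where the middle step uses $\theta - d - p < 0$ on the bounded set $Q_4^+$ and the last step is Hardy's inequality. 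Combined with the volume bound $\mu(Q_r^+(y_1)) \ge N r^{\nu+2}$ where $\nu = \max\{d,\theta\}$, and the triangle inequality $|D_{x'}u - (D_{x'}u)_{Q_r^+(y_1)}| \le |D_{x'}v - (D_{x'}v)_{Q_r^+(y_1)}| + 2|D_{x'}w|$, the desired estimate follows exactly as in Proposition \ref{prop4.3}.

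In the interior case $y_1 > 1$, we have $Q_r^+(y_1) = Q_r(y_1) \subset Q_{1/2}(y_1) \subset Q_{\kappa r}^+(y_1)$ and the weight $x_1^{\theta-d}$ is comparable to a constant on $Q_{1/2}(y_1)$. Constructing $w$ via Theorem \ref{thm1011_1} with cutoffs $g I_{Q_{1/2}(y_1)}$ and $f I_{Q_{1/2}(y_1)}$, setting $v = u - w$, and invoking Lemma \ref{lem4.1bd} yields
$$
[D_{x'}v]_{C^{\alpha/2,\alpha}(Q_{1/4}(y_1))} \le N\|D_{x'}v\|_{L_p(Q_{1/2}(y_1))},
$$
after which comparability of the weighted and Lebesgue measures on $Q_{1/2}(y_1)$ converts unweighted into weighted norms and the triangle inequality closes the argument. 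The one conceptual difference from the non-divergence case is that Lemmas \ref{lem4.2d} and \ref{lem4.1bd} produce Hölder regularity only for the tangential derivative $D_{x'}v$, not for the full gradient; this is precisely why the proposition quantifies the oscillation of $D_{x'}u$ only and bounds it by $|Du|$ on the right-hand side. I expect the verification that the weighted estimate in Case 1 genuinely reduces to $\|Dv\|_{\bL_{p,\theta}(Q_4^+)}$ (via the chain $L_p \hookrightarrow M^{-1}\bL_{p,\theta}$ followed by Hardy) to be the main place where the proof needs careful bookkeeping of exponents.
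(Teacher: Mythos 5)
Your proposal is correct and follows exactly the route the paper intends: the paper's own proof is a one-line remark that one repeats the argument of Proposition~\ref{prop4.3} with Theorem~\ref{thm1011_1}, Lemma~\ref{lem4.1bd}, and Lemma~\ref{lem4.2d} in place of their non-divergence counterparts, and you have simply spelled out what that substitution entails, with the correct choice of cutoffs, the same two-case split by $y_1$, and the same chain $L_p \hookrightarrow M^{-1}\bL_{p,\theta}$ followed by Hardy's inequality in the boundary case. Your closing observation—that the divergence-form H\"older estimates only control $D_{x'}v$, which is precisely why the proposition measures the oscillation of $D_{x'}u$ alone while the right-hand side still carries $|Du|$—is an accurate reading of the structural difference from the non-divergence case.
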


\begin{proof}
The proof is the same as that of Proposition \ref{prop4.3} by using Theorem \ref{thm1011_1}, and Lemmas \ref{lem4.1bd} and \ref{lem4.2d}. We omit the details.
\end{proof}

To estimate $D_1 u$, we consider the following equation of special type:
\begin{equation}
                                \label{eq1.01}
-u_t+D_1(a^{11}D_1 u)+\Delta_{x'} u-\lambda u=D_i g_i+f,
\end{equation}
where $a^{11}=a^{11}(t)$ or $a^{11}(x_1)$, and $\Delta_{x'}=D_2^2+\ldots+D_d^2$. We denote $U=D_1 u$ when $a^{11}=a^{11}(t)$, and $U=a^{11}D_1 u$ when $a^{11}=a^{11}(x_1)$. The next two lemmas show that $U$ is H\"older continuous in both cases.

\begin{lemma}
                            \label{lem4.9}
Let $1<p<\infty$, $\alpha\in (0,1)$, and $\lambda\ge 0$. Assume that $u\in \cH^{1}_{p}(Q_2)$ satisfies
\begin{equation}
                                \label{eq1.41}
-u_t+D_1(a^{11}D_1 u)+\Delta_{x'} u-\lambda u=0
\end{equation}
in $Q_2$. Then there exists a constant $N=N(d,\delta,p,\alpha)$ such that
\begin{equation}
                                    \label{eq1.43}
[U]_{C^{\alpha/2,\alpha}(Q_1)}\le N\|U\|_{L_p(Q_2)}.
\end{equation}
\end{lemma}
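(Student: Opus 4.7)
The proof rests on the observation that $U$ itself satisfies a parabolic equation of exactly the type covered by the lemmas of this section. In Case 1, with $a^{11}=a^{11}(t)$ and $U=D_1u$, differentiating \eqref{eq1.41} in $x_1$ and using the $x_1$-independence of $a^{11}$ gives
\[
-U_t + a^{11}(t) D_1^2 U + \Delta_{x'} U - \lambda U = 0 \qquad \text{in } Q_2.
\]
In Case 2, with $a^{11}=a^{11}(x_1)$ and $U=a^{11}(x_1)D_1u$, equation \eqref{eq1.41} rewrites as $D_1U = u_t - \Delta_{x'}u + \lambda u$; differentiating once more in $x_1$ and using $D_1u = U/a^{11}(x_1)$ together with the independence of $a^{11}$ from $t$ and $x'$ yields
\[
-U_t + a^{11}(x_1) D_1^2 U + \Delta_{x'} U - \lambda U = 0 \qquad \text{in } Q_2.
\]
In both cases $U$ solves a parabolic equation whose leading coefficients match Assumption \ref{assum3.1} (with the original $a^{11}$ and identity elsewhere).

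To make these derivations rigorous for $u\in\cH^{1}_{p}(Q_2)$, I would mollify $u$ in the variables along which $a^{11}$ is constant: in all of $x$ in Case 1 and in $(t,x')$ in Case 2. Since such mollifications commute with the operator in \eqref{eq1.41}, the mollified $u^{(\varepsilon)}$ still solves the equation and becomes smooth in those variables. The derivation of the equation for $U^{(\varepsilon)}$ is then classical, and passing to the limit $\varepsilon\to 0$ delivers the equation for $U$ in the appropriate weak sense.

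With the equation for $U$ at hand, \eqref{eq1.43} becomes a local $L_p\to C^{\alpha/2,\alpha}$ regularity statement for parabolic equations satisfying Assumption \ref{assum3.1}. I would obtain it by a standard bootstrap. Lemma \ref{lem4.1bd}, applied iteratively to $u$ and to its tangential derivatives $D_{x'}^{k}u$ (each of which solves the same equation as $u$ by $x'$-translation invariance, because $a^{11}$ is $x'$-independent), produces arbitrary tangential regularity: $u, D_{x'}^{k}u \in \cH^{1}_{q}$ on any sub-cylinder, for every $k$ and every $q<\infty$. The equation for $U$ then transfers this regularity to the $x_1$- and $t$-directions, placing $U \in W^{1,2}_{q}(Q_r)$ for any $q$ on a sub-cylinder $Q_r\Subset Q_2$. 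Finally, Lemma \ref{lem4.1b} applied to $U$ yields $DU\in C^{\alpha/2,\alpha}(Q_1)$ with
\[
\|DU\|_{C^{\alpha/2,\alpha}(Q_1)} \le N\bigl\|\sqrt{\lambda}|U|+|DU|\bigr\|_{L_q(Q_r)},
\]
and the Hölder continuity of $U$ itself (and hence \eqref{eq1.43}) follows.

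The principal obstacle is closing the bootstrap with the right-hand side of \eqref{eq1.43} being $\|U\|_{L_p(Q_2)}$ alone, rather than $\|u\|_{L_p(Q_2)}$ or $\|Du\|_{L_p(Q_2)}$, which are what the intermediate interior estimates naturally produce. The resolution uses the invariance $U=a^{11}D_1u = a^{11}D_1(u-h)$ for any $h=h(t,x')$, so that in every $L_p$ estimate $u$ may be replaced by $u-\bar u(t,x')$ for a suitable tangential average; the Poincaré inequality in $x_1$ then bounds $\|u-\bar u\|_{L_p}$ by $\|D_1u\|_{L_p}\sim\|U\|_{L_p}$, eliminating the extraneous lower-order norms and closing the estimate in the required form.
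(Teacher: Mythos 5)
Your proposal follows the same route as the paper: derive the non-divergence equation for $U$ (differentiating in $x_1$ using the invariance of $a^{11}$ in the remaining variables), bootstrap tangential and time regularity using the translation/difference invariance of the operator, conclude $U\in W^{1,2}_q$ on an interior cylinder, and finish with the interior estimate of Lemma \ref{lem4.1b}. The paper uses finite differences rather than mollifications to justify the differentiated equation, but this is an inessential technical choice.

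There is, however, a slip in how you close the argument, and the "principal obstacle" you identify in your last paragraph is a misdiagnosis. Once the qualitative regularity $U\in W^{1,2}_p(Q_{3/2})$ is established by the bootstrap, the correct move is to apply the first estimate \eqref{eq10.30} of Lemma \ref{lem4.1b} to $U$: it yields $\|U\|_{W^{1,2}_q(Q_1)}\le N\|U\|_{L_p(Q_{3/2})}\le N\|U\|_{L_p(Q_2)}$, and then the parabolic Sobolev embedding (with $q>d+2$ chosen in terms of $\alpha$) gives \eqref{eq1.43} outright. The right-hand side is automatically $\|U\|_{L_p}$ rather than $\|u\|_{L_p}$ or $\|Du\|_{L_p}$ because the interior estimate is applied to $U$ itself; the bootstrap in the previous step only serves to verify that $U$ qualifies as a $W^{1,2}_p$ solution and leaves no trace in the final constant. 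Your Poincar\'e/normalization trick therefore addresses a non-issue. What it does \emph{not} address is the genuine problem created by your choice of invoking \eqref{eq10.32}: that estimate for $DU$ carries a $\sqrt{\lambda}\,\|U\|_{L_p}$ term on the right which cannot be absorbed into $N\|U\|_{L_p(Q_2)}$ with $N$ independent of $\lambda$, as required by the lemma. Replacing \eqref{eq10.32} by \eqref{eq10.30} (as the paper does) removes this defect and makes the last paragraph of your proposal unnecessary.
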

\begin{proof}
In the case when $a^{11}=a^{11}(t)$, we easily seen that $U=D_1u$ satisfies the same equation as $u$. Using finite-difference approximations, from Lemma \ref{lem4.1bd} (or \ref{lem4.1b}) and the Sobolev embedding theorem we get \eqref{eq1.43}.

In the case when $a^{11}=a^{11}(x_1)$, again using finite-difference approximations and Lemma \ref{lem4.1bd}, it is easily seen that $D_t^k D_{x'}^l D_1^i u\in L_q(Q_{3/2})$ for $i=0,1$ and any integers $k,l\ge 0$ and $q\in (1,\infty)$. Therefore, by using the equation \eqref{eq1.41}, we deduce that $U\in W^{1,2}_q(Q_{3/2})$ and it satisfies the non-divergence form equation
$$
-U_t+a^{11}D_1^2 U+\Delta_{x'}U-\lambda U=0
$$
in $Q_{3/2}$, which leads to \eqref{eq1.43} by using Lemma \ref{lem4.1b}.
\end{proof}
\begin{remark}
We note that from the proof above it is clear that $U$ possesses better regularity. In fact, $U\in C^{1,2}$. However, we will not use this in the sequel.
\end{remark}

\begin{lemma}
                            \label{lem4.10}
Let $1<p<\infty$, $\theta\in (d-1,d-1+p)$, $\alpha\in (0,1)$, and $\lambda\ge 0$. Assume that $u\in \cH^{1,\lambda}_{p,\theta}(Q_2^+)$ satisfies \eqref{eq1.41} in $Q_2^+$.
Then $U\in C^{\alpha/2,\alpha}(Q_1^+)$ and there exists a constant $N=N(d,\delta,p,\alpha)$ such that
\begin{equation*}
[U]_{C^{\alpha/2,\alpha}(Q_1^+)}\le N\|u\|_{L_p(Q^+_2)}.
\end{equation*}
\end{lemma}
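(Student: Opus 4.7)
The strategy is to reduce the boundary H\"older estimate to the interior estimate (Lemma \ref{lem4.9}) via odd reflection across $\{x_1=0\}$, exploiting the fact that functions in $\cH^{1,\lambda}_{p,\theta}(Q_2^+)$ have zero trace on the boundary (a consequence of the density of $C_0^\infty((-\infty,T]\times \bR^d_+)$ in that space).

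First I would upgrade regularity: by Lemma \ref{lem4.2d} applied with a mild rescaling, $u\in \cH^1_p(Q_r^+)$ for any fixed $r\in (1,2)$, together with the bound $\|D_1 u\|_{L_p(Q_r^+)}\le N(r)\|u\|_{L_p(Q_2^+)}$. In particular, $U$ is a well-defined element of $L_p(Q_r^+)$. Next I would define the odd extension
\[
\tilde u(t,x_1,x'):=\begin{cases} u(t,x_1,x'), & x_1\ge 0,\\ -u(t,-x_1,x'), & x_1<0, \end{cases}
\]
which lies in $\cH^1_p(Q_r)$ thanks to the vanishing trace of $u$, and (only in Case 2) extend $\tilde a^{11}(x_1):=a^{11}(|x_1|)$. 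This $\tilde a^{11}$ preserves both ellipticity and the structural property of depending on $t$ or $x_1$ alone, so Lemma \ref{lem4.9} remains applicable. A parity argument --- decompose any $\phi\in C_0^\infty(Q_r)$ into its $x_1$-odd and $x_1$-even parts, pair against $\tilde u$, and use $u|_{x_1=0}=0$ to eliminate interface contributions --- shows that $\tilde u$ weakly satisfies $-\tilde u_t+D_1(\tilde a^{11}D_1\tilde u)+\Delta_{x'}\tilde u-\lambda \tilde u=0$ in $Q_r$. Consequently the extended quantity $\tilde U$ (namely $D_1\tilde u$ in Case 1 and $\tilde a^{11}D_1\tilde u$ in Case 2) is an even function of $x_1$ and agrees with $U$ on $Q_r^+$.

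Finally, a standard covering argument reduces matters to Lemma \ref{lem4.9}: for each $X_0\in \overline{Q_1^+}$ choose $r_0=r_0(r)>0$ so that $Q_{2r_0}(X_0)\subset Q_r$, and apply a rescaled Lemma \ref{lem4.9} on $Q_{2r_0}(X_0)$ to obtain $[\tilde U]_{C^{\alpha/2,\alpha}(Q_{r_0}(X_0))}\le N\|\tilde U\|_{L_p(Q_{2r_0}(X_0))}$. Patching these estimates over a finite cover of $Q_1^+$ and combining with Step~1 yields
\[
[U]_{C^{\alpha/2,\alpha}(Q_1^+)}\le N \|\tilde U\|_{L_p(Q_r)} \le N \|D_1 u\|_{L_p(Q_r^+)} \le N \|u\|_{L_p(Q_2^+)}.
\]
The main obstacle is the rigorous verification in Step~2 that the divergence-form weak equation is preserved under odd reflection, especially in Case 2 where $\tilde a^{11}$ is merely the even extension of a measurable function; this requires checking that no distributional boundary contribution appears at $\{x_1=0\}$, which is ensured by the vanishing trace of $u$ together with the $L_p$-integrability of $\tilde a^{11}D_1\tilde u$ across the interface supplied by Step~1.
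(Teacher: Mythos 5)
Your proposal follows essentially the same route as the paper: upgrade to $\cH^1_p$ near the boundary via Lemma \ref{lem4.2d}, take the odd extension of $u$ and the even extension of $a^{11}$ across $\{x_1=0\}$, observe that the extended function still solves \eqref{eq1.41} in the full cylinder, and then invoke the interior estimate of Lemma \ref{lem4.9}. The only cosmetic differences are that you run a covering argument where the paper applies a single rescaled instance of Lemma \ref{lem4.9} on $Q_{3/2}$, and you close the $L_p$ bound via Lemma \ref{lem4.2d} where the paper uses Lemma \ref{lem4.1d}; both are valid and yield the same conclusion.
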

\begin{proof}
By Lemma \ref{lem4.2d}, we have $u\in \cH^1_p(Q_{3/2}^+)$. By taking the odd extension of $u$ and the even extension of $a^{11}$ with respect to $x_1$, we see that $u \in \cH^1_p(Q_{3/2})$ satisfies \eqref{eq1.41} in $Q_{3/2}$. It follows from Lemmas \ref{lem4.9} and \ref{lem4.1d} that
$$
[U]_{C^{\alpha/2,\alpha}(Q_1^+)}\le N\|U\|_{L_p(Q_{3/2})}\le N\|u\|_{L_p(Q_2)}\le N\|u\|_{L_p(Q_2^+)}.
$$
The lemma is proved.
\end{proof}

\begin{proposition}
                            \label{prop4.12}
Let $p\in (1,\infty)$, $\lambda\ge 0$, $\theta\in (d-1,d-1+p)$, $r > 0$, $\kappa\ge 32$, $y_1 \ge 0$, and $\alpha\in (0,1)$.
Assume that $g, f\in \bL_{p,\theta}\left(Q_{\kappa r}^+(y_1)\right)$ and $f \equiv 0$ if $\lambda = 0$. Let $u\in \cH^{1, \lambda}_{p,\theta}\left(Q_{\kappa r}^+(y_1)\right)$ be a solution of \eqref{eq1.01}
in $Q_{\kappa r}^+(y_1)$. Then we have
\begin{align*}
&\big(|U-(U)_{Q^+_r(y_1)}|^p\big)^{1/p}_{Q^+_r(y_1)}
\le N\kappa^{-\alpha}\big(|U|^p\big)^{1/p}_{Q^+_{\kappa r}(y_1)}\nonumber\\
&\, +N\kappa^{(d+\theta+2)/p}\left( |g|^p + \lambda^{-\frac p 2} |f|^p\right)^{1/p}_{Q^+_{\kappa r}(y_1)},
\end{align*}
where $N=N(d,\delta,p,\theta,\alpha)>0$ is a constant.
\end{proposition}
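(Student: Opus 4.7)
The plan is to mirror the proof of Proposition \ref{prop4.3d}, substituting Lemmas \ref{lem4.9} and \ref{lem4.10} (which give H\"older regularity of $U$ itself for the special equation \eqref{eq1.01}) for Lemmas \ref{lem4.1bd} and \ref{lem4.2d}. By a straightforward dilation, it suffices to treat the case $\kappa r = 8$, and as in Proposition \ref{prop4.3} one splits into a near-boundary case $y_1 \in [0,1]$ and an interior case $y_1 > 1$.

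In the boundary case, Theorem \ref{thm1011_1} produces $w \in \cH^{1,\lambda}_{p,\theta}(\bR_0 \times \bR^d_+)$ solving
$$
-w_t + D_1(a^{11} D_1 w) + \Delta_{x'} w - \lambda w = D_i(g_i I_{Q^+_4}) + f I_{Q^+_4}
$$
on $\bR_0 \times \bR^d_+$, together with the bound
$$
\|Dw\|_{\bL_{p,\theta}(\bR_0\times \bR^d_+)} \le N \bigl(\|g\|_{\bL_{p,\theta}(Q^+_4)} + \lambda^{-1/2}\|f\|_{\bL_{p,\theta}(Q^+_4)}\bigr).
$$
The difference $v := u - w$ then satisfies the homogeneous equation \eqref{eq1.41} in $Q^+_4$, and Lemma \ref{lem4.10} (with scaling) gives $[V]_{C^{\alpha/2,\alpha}(Q^+_2)} \le N \|v\|_{L_p(Q^+_4)}$, where $V$ denotes $D_1 v$ or $a^{11} D_1 v$ according to the form of $a^{11}$. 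Since $\theta - d - p < 0$, the unweighted $L_p$-norm on the bounded set $Q^+_4$ is controlled by $\|M^{-1}v\|_{\bL_{p,\theta}(Q^+_4)}$, and Hardy's inequality together with the ellipticity bound on $a^{11}$ yields
$$
\|v\|_{L_p(Q^+_4)} \le N \|D_1 v\|_{\bL_{p,\theta}(Q^+_4)} \le N \|V\|_{\bL_{p,\theta}(Q^+_4)} \le N\bigl(\|U\|_{\bL_{p,\theta}(Q^+_4)} + \|U_w\|_{\bL_{p,\theta}(Q^+_4)}\bigr),
$$
where $U_w$ denotes the analogue of $U$ for $w$ and is controlled by the data via the energy estimate above.

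The interior case is handled in the same spirit, except the localization is to $Q_{1/2}(y_1) \subset Q^+_{\kappa r}(y_1)$, and Lemma \ref{lem4.9} replaces Lemma \ref{lem4.10}; here the weighted and unweighted averages on $Q_{1/2}(y_1)$ are comparable as in \eqref{eq1213_1}, so the $L_{p,\theta}$ estimate of $Dw$ feeds directly into the $L_p$ estimate needed to apply Lemma \ref{lem4.9}. In both cases, writing $U = U_v + U_w = V + U_w$ and applying the triangle inequality, the oscillation of $U$ on $Q^+_r(y_1)$ is dominated by $N r^\alpha[V]_{C^{\alpha/2,\alpha}}$ together with the $L_p$-average of $U_w$ over $Q^+_r(y_1)$; this second term is bounded by $N r^{-(\nu+2)/p}\|U_w\|_{\bL_{p,\theta}(\bR_0 \times \bR^d_+)}$ with $\nu = \max\{d,\theta\}$, since $\mu(Q^+_r(y_1)) \ge N r^{\nu+2}$. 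Recalling $r = 8/\kappa$ and $d + \theta \ge \nu$, this yields the desired estimate.

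The principal delicacy, inherited from Proposition \ref{prop4.3d}, is transferring the unweighted Hölder bound on the homogeneous part to a weighted norm of $U$ on the larger cylinder; this is precisely what Hardy's inequality (combined with the ellipticity of $a^{11}$ to swap $D_1 v$ and $V$) accomplishes. Beyond invoking Lemmas \ref{lem4.9}--\ref{lem4.10} in place of Lemmas \ref{lem4.1bd}--\ref{lem4.2d}, no new ingredients are needed.
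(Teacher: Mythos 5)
Your proposal matches the paper's intended proof exactly: the paper states only that one should ``follow the proof of Proposition~\ref{prop4.3}, replacing Lemmas~\ref{lem4.1b} and~\ref{lem4.2} by Lemmas~\ref{lem4.9} and~\ref{lem4.10},'' and you have correctly filled in the omitted details, including the two-case split by $y_1$, the decomposition $u=v+w$ via Theorem~\ref{thm1011_1}, the passage from the unweighted $L_p$-norm of $v$ to $\|D_1v\|_{\bL_{p,\theta}}$ through $\theta-d-p<0$ and Hardy's inequality, and the use of ellipticity to pass between $D_1v$ and $V$. No gaps.
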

\begin{proof}
Using Lemmas \ref{lem4.9} and \ref{lem4.10}, we follow the proof of Proposition \ref{prop4.3} with obvious modifications. We omit the details.
\end{proof}

\section{Proof of Theorem \ref{thm1}}
							\label{sec05}

Throughout this section, we denote $L=a^{ij}D_{ij}$ and assume $p\in (1,\infty)$, $\lambda\ge 0$, and $\theta\in (d-1,d-1+p)$.

By the proof of \eqref{eq1004_4}, using Theorem 2.2 of \cite{MR2833589}, a scaling argument, and Hardy's inequality, we reduce the estimate of the $\fH^2_{p,\theta}$ norm of $u$ to that of the $\bL_{p,\theta}$ norm of $Du$.

\begin{lemma}
                                \label{lem5.1}
Let $T \in (-\infty, \infty]$ and $\rho\in (1/2,1)$. Then there exists a positive constant $\varepsilon_0$ depending only on $d$, $\delta$, $p$, and $\theta$ such that, for any $\varepsilon\in (0,\varepsilon_0]$, under Assumption A ($\rho,\varepsilon$) or Assumption A$'$ ($\rho,\varepsilon$) the following holds. Suppose that $u \in \fH_{p,\theta}^2(-\infty,T)$ satisfies
\begin{equation*}
- u_t + Lu -\lambda u = f
\end{equation*}
in $\bR_T \times \bR^d_+$, where $f \in M^{-1}\bL_{p,\theta}(-\infty,T)$.
Then
\begin{equation*}
\lambda \| M u \|_{\bL_{p,\theta}(-\infty,T)}
+ \|u \|_{\fH^2_{p,\theta}(-\infty,T)}
\le N \|Mf\|_{\bL_{p,\theta}(-\infty,T)}+N\| D u \|_{\bL_{p,\theta}(-\infty,T)},
\end{equation*}
where $N = N(d, \delta, \theta, p)$ is a constant.
\end{lemma}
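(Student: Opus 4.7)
The plan is to adapt the scaling/cutoff argument in the proof of \eqref{eq1004_4} from Theorem \ref{thm1011}, but replacing the role of the $L_p$-estimate for equations with simple coefficients (Theorems 3.1 and 4.1 of \cite{MR2833589}) by the unweighted $L_p$-estimate for equations with partially BMO coefficients, namely Theorem 2.2 of \cite{MR2833589}. The latter requires the BMO seminorms of $a^{ij}$ to be sufficiently small on all parabolic cylinders of radius bounded by some fixed scale $R_0$, and the smallness threshold $\varepsilon_0$ determines how small $\varepsilon$ in Assumptions A and A$'$ must be taken.

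Concretely, I would first fix a nonnegative $\zeta \in C_0^\infty(\bR_+)$, supported in, say, $(1/2, 2)$, normalized so that $\int_0^\infty r^{-1-\theta+d-p} |\zeta(r)|^p\,dr = 1$. Write the equation satisfied by $u\zeta$:
\begin{equation*}
-(u\zeta)_t + a^{ij}D_{ij}(u\zeta) - \lambda(u\zeta) = f\zeta + 2a^{i1}\zeta' D_i u + a^{11}\zeta'' u.
\end{equation*}
Since $u\zeta \in W^{1,2}_p((-\infty,T)\times \bR^d)$ has compact support in $x_1$, Theorem 2.2 of \cite{MR2833589} yields, for $\varepsilon \le \varepsilon_0(d,\delta,p,\theta)$,
\begin{equation*}
\lambda\|u\zeta\|_p + \|D^2(u\zeta)\|_p + \|(u\zeta)_t\|_p \le N\bigl(\|f\zeta\|_p + \|\zeta' Du\|_p + \|\zeta'' u\|_p\bigr).
\end{equation*}
Expanding $\zeta D_{ij}u = D_{ij}(u\zeta) - 2 D_i u D_j\zeta - u D_{ij}\zeta$, and using $u\zeta_t = (u\zeta)_t$, one obtains
\begin{equation*}
\lambda^p\|u\zeta\|_p^p + \|\zeta D^2 u\|_p^p + \|\zeta u_t\|_p^p \le N\bigl(\|f\zeta\|_p^p + \|\zeta' Du\|_p^p + \|\zeta'' u\|_p^p\bigr).
\end{equation*}

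Next I would substitute $\zeta_r(x_1) = \zeta(rx_1)$ in place of $\zeta$, multiply by $r^{-1-\theta+d-p}$, and integrate over $r \in (0,\infty)$, using Fubini. A computation identical to that in Theorem \ref{thm1011} converts each term into a weighted norm: $\|\zeta_r u\|_p$ integrated against $r^{-1-\theta+d-p}$ gives $\|Mu\|_{p,\theta}^p$; similarly $\|\zeta_r D^2u\|_p$, $\|\zeta_r u_t\|_p$, $\|\zeta_r f\|_p$ give $\|MD^2 u\|_{p,\theta}^p$, $\|Mu_t\|_{p,\theta}^p$, $\|Mf\|_{p,\theta}^p$ respectively, while $\|\zeta_r' Du\|_p^p$ yields $\|Du\|_{p,\theta}^p$ (the $r$-factors from $\zeta_r'$ absorb into the weight), and $\|\zeta_r'' u\|_p^p$ yields $\|M^{-1}u\|_{p,\theta}^p$. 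One must verify that at each scale $r$ the cutoff $\zeta_r$ is supported on $x_1 \in (1/(2r), 2/r)$, so the relevant parabolic cylinders used in Theorem 2.2 of \cite{MR2833589} have radii comparable to $1/r$ centered at points with $y_1 \sim 1/r$; since $\rho$ can be chosen close to $1$, these cylinders satisfy $r_{\mathrm{cyl}} \le \rho y_1$, and so Assumption A (respectively A$'$) furnishes the required BMO smallness uniformly in $r$. This is the step I expect to be the main technical check. The outcome is
\begin{equation*}
\lambda^p\|Mu\|_{p,\theta}^p + \|MD^2 u\|_{p,\theta}^p + \|Mu_t\|_{p,\theta}^p \le N\bigl(\|Mf\|_{p,\theta}^p + \|Du\|_{p,\theta}^p + \|M^{-1}u\|_{p,\theta}^p\bigr).
\end{equation*}

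Finally, to control $\|M^{-1}u\|_{p,\theta}$ by $\|Du\|_{p,\theta}$, I would invoke Hardy's inequality (Lemma \ref{lem0920_1}) with $c = \theta - d - p + 2 \in (1-p,1)$, which under the hypothesis $\theta \in (d-1,d-1+p)$ gives $\|M^{-1}u\|_{p,\theta} \le N\|D_1 u\|_{p,\theta} \le N\|Du\|_{p,\theta}$ after a standard density argument using that $C_0^\infty((-\infty,T]\times\bR^d_+)$ is dense in $\fH^2_{p,\theta}(-\infty,T)$. Combining the displayed estimate with this Hardy bound and absorbing $\|M^{-1}u\|_{p,\theta}$ on the right, the desired inequality
\begin{equation*}
\lambda\|Mu\|_{p,\theta} + \|u\|_{\fH^2_{p,\theta}(-\infty,T)} \le N\bigl(\|Mf\|_{p,\theta} + \|Du\|_{p,\theta}\bigr)
\end{equation*}
follows, since $\|u\|_{\fH^2_{p,\theta}} = \|M^{-1}u\|_{p,\theta} + \|Du\|_{p,\theta} + \|MD^2u\|_{p,\theta} + \|Mu_t\|_{p,\theta}$.
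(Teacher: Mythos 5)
The overall plan you describe (reduce the $\fH^2_{p,\theta}$ norm of $u$ to the $\bL_{p,\theta}$ norm of $Du$ by applying an unweighted partially-BMO estimate to $u\zeta_r$ at each scale $r$, integrate, then Hardy) is the paper's plan, but your execution omits a key step and, as written, the application of Theorem~2.2 of \cite{MR2833589} fails.

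The difficulty is twofold. First, you only dilate in $x_1$ through the cutoff $\zeta_r$, not in time. The support of $\zeta_r$ lies near $x_1\sim 1/r$, so Assumption A (or A$'$) only controls the mean oscillation of $a^{ij}$ on parabolic cylinders of radius $R\lesssim \rho/r$, which is a scale that varies with $r$. Theorem 2.2 of \cite{MR2833589} assumes smallness of the BMO semi-norm below a \emph{fixed} scale $R_0$ and produces a parameter $\lambda_0=\lambda_0(d,\delta,p,K,R_0)$ (typically $\lambda_0\sim R_0^{-2}$) such that the a priori estimate holds only for $\lambda\ge\lambda_0$. In your setup $R_0\sim 1/r$ and hence $\lambda_0\sim r^2$, which depends on $r$; for $\lambda=0$ (which the lemma allows) the hypothesis $\lambda\ge\lambda_0$ is simply false for all $r>0$. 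So the inequality you claim from Theorem~2.2,
\begin{equation*}
\lambda\|u\zeta_r\|_p + \|D^2(u\zeta_r)\|_p + \|(u\zeta_r)_t\|_p \le N\bigl(\|f\zeta_r\|_p + \|\zeta_r' Du\|_p + \|\zeta_r'' u\|_p\bigr),
\end{equation*}
does not follow from that theorem for all $\lambda\ge 0$ with an $r$-independent constant.

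The paper circumvents both problems by the parabolic scaling $w(t,x)=r^2 u(t/r^2,x/r)\zeta(x_1)$, $a_r^{ij}(t,x)=a^{ij}(t/r^2,x/r)$, which moves the support of the cutoff to a fixed location $x_1\in(2,3)$ and, because $\rho>1/2$, shows that the unweighted mean oscillation of $a_r^{ij}$ is $\le N(d,\theta)\varepsilon$ on all cylinders $Q_R$ with $R\le 1/2$ and $x_1\in(1,4)$, i.e., at a \emph{fixed} scale $R_0=1/2$ uniformly in $r$. It then cuts off the coefficients outside $(1,4)$ (so Theorem~2.2 of \cite{MR2833589} can be applied on all of $\bR^{d+1}$) and, crucially, rewrites the equation as
\begin{equation*}
-w_t + a_r^{ij}D_{ij}w - (r^{-2}\lambda + \lambda_0)w = f_r - \lambda_0 w,
\end{equation*}
so the effective zeroth-order coefficient $r^{-2}\lambda+\lambda_0$ is $\ge\lambda_0$ for every $\lambda\ge 0$; the extra term $N\lambda_0\|w\|_p$ on the right turns, upon undoing the scaling, into a harmless multiple of $\|M^{-1}u\|_{p,\theta}$ that is absorbed by Hardy. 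Your proof needs to add these two ingredients — the simultaneous time rescaling and the $\lambda_0$ shift of the zeroth-order term — to be correct; without them the claimed uniformity in $r$ and the case $\lambda<\lambda_0$ are not justified.
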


\begin{proof}
We give a sketched proof. Take functions $\zeta\in C_0^\infty(2,3)$ and $\tilde \zeta\in C_0^\infty(1,4)$ such that $\tilde \zeta=1$ on $[2,3]$.
Set
$$
w(t,x) = r^2 u(t/r^2,x/r) \zeta(x_1),\quad a^{ij}_r(t,x) = a^{ij}(t/r^2,x/r).
$$
Then $w \in W_p^{1,2}(\bR_T \times \bR^d)$
satisfies
\begin{equation}
							\label{eq1230_1}
-w_t + a^{ij}_r(t,x) D_{ij} w - r^{-2}\lambda w = f_r
\end{equation}
in $\bR_T\times \bR^d$,
where
\begin{align*}
f_r(t,x) = &f(t/r^2,x/r)\zeta(x_1)+2r \sum_{i=1}^d a_r^{i1}(t,x) D_i u(t/r^2,x/r)\zeta'(x_1)\\
&\quad +r^2 a_r^{11}(t,x) u(t/r^2,x/r)\zeta''(x_1).
\end{align*}
Since $a^{ij}$ satisfy Assumption A $(\rho, \varepsilon)$ or Assumption A$'$ $(\rho, \varepsilon)$, the weighted mean oscillation of $a^{ij}(t,x)$ (in $x$, $x'$, or $(t,x')$) is less than $\varepsilon$ on $Q_R(t,x)$ for $R \le \rho x_1$. Then due to the scaling, the (unweighted) mean oscillation of $a^{ij}_r(t,x)$ on $Q_R(t,x)$ is less than $N(d,\theta)\varepsilon$ whenever $x_1 \in (1,4)$ and $R \le 1/2$.
This along with the fact that the equation \eqref{eq1230_1} is zero for $x_1 \notin (2,3)$ allows us to apply the results in \cite{MR2833589}. To do this, write the equation as
\begin{equation}
                                \label{eq1.30}
-w_t + a_r^{ij}(t,x) D_{ij} w - (r^{-2}\lambda + \lambda_0) w = f_r - \lambda_0 w,
\end{equation}
where $\lambda_0 = \lambda_0(d,\delta,p,0,1/2)$ (in the notation of \cite{MR2833589} $K=0$ and $R_0 = 1/2$) is from Theorems 2.2 and 2.3 in \cite{MR2833589}. Since $w=0$ for $x_1 \notin (2,3)$, we can replace $a_r^{ij}$ in \eqref{eq1.30} by $\tilde a_r^{ij}:=\tilde \zeta a_r^{ij}+(1-\tilde\zeta)\delta_{ij}$ and extend the equation to the whole space.
By the aforementioned theorems in \cite{MR2833589},
there exists $\varepsilon_0 = \varepsilon_0(d,\delta,p,\theta)>0$ such that, for $\varepsilon \in (0,  \varepsilon_0]$,
we have
$$
\lambda \|w\|_p + \|D^2w\|_p
+ \|w_t\|_p
\le N \|f_r\|_p + N \lambda_0 \|w\|_p
$$
for any $\lambda \ge 0$,
where $\| \cdot \|_p = \| \cdot \|_{L_p\left(\bR_T \times \bR^d\right)}$ and $N = N(d,\delta,p)$.
Then by changing the variables $(t/r^2, x/r) \to (t,x)$  and following the same lines as in the proof of Theorem \ref{thm1011}, we get \eqref{eq1004_4}. Note that $\lambda_0$ is incorporated into $N = N(d,\delta,p,\theta)$ in \eqref{eq1004_4}.
Finally, we use Hardy's inequality to bound $\|M^{-1} u\|_{p,\theta}$ by $\|Du\|_{p,\theta}$ on the right-hand side.
\end{proof}

To estimate $\|Du\|_{\bL_{p,\theta}}$, we extend the mean oscillation estimate in Proposition \ref{prop4.3} to equations with partially VMO coefficients.

\begin{lemma}
                            \label{lem5.2}
Let $h>0$, $\rho\in (1/2,1)$, $\varepsilon\in (0,1)$, $R\in (0,\rho h)$, $\kappa\ge 32$, and $\beta,\beta'\in (1,\infty)$ satisfying $1/\beta+1/\beta'=1$. Let $u\in \fH^2_{p,\theta}(-\infty,\infty)$ be compactly supported on $Q_{R}(h)=(-R^2,0)\times (h-R,h+R)\times B_R'(0)$ and $f:=-u_t+L u-\lambda u$.
Then under Assumption A ($\rho,\varepsilon$) or Assumption A$'$ ($\rho,\varepsilon$), for any $r> 0$ and $Y=(s,y) \in \overline{\bR^{d+1}_+}$, we have
\begin{align}
                                        \label{eq3.48}
&\big(|Du-(Du)_{Q^+_r(Y)}|^p\big)^{1/p}_{Q^+_r(Y)}\le N_0\kappa^{-1/2}\big((\sqrt \lambda |u|+|Du|)^p\big)^{1/p}_{Q^+_{\kappa r}(Y)}\nonumber\\
&\,+N_1\kappa^{(d+\theta+2)/p}\varepsilon^{1/(\beta'p)}
\big(|MD^2 u|^{\beta p}\big)_{Q^+_{\kappa r}(Y)}^{1/(\beta p)} +N_0\kappa^{(d+\theta+2)/p}\big(|Mf|^p \big)^{1/p}_{Q^+_{\kappa r}(Y)},
\end{align}
where $N_0=N_0(d,\delta,p,\theta)$ and $N_1=N_1(d,\delta,p,\theta,\beta,\rho)>0$ are constants.
\end{lemma}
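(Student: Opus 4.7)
The plan is a standard perturbation argument: freeze the coefficients locally to obtain ``simple'' coefficients $\bar a^{ij}$ of the type considered in Section~\ref{sec03}, apply Proposition~\ref{prop4.3} to the resulting equation with simple coefficients, and control the perturbation error by H\"older's inequality together with the smallness of the mean oscillations of $a^{ij}$.

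First I would dispose of the trivial case where $Q^+_r(Y)$ is disjoint from $\operatorname{supp} u$, in which \eqref{eq3.48} is immediate. Otherwise, freeze the coefficients at scale $\kappa r$ about $Y=(s,y)$: for $(i,j)\ne(1,1)$, put
$$\bar a^{ij}(t,x_1):=\dashint_{B'_{\kappa r}(y')}a^{ij}(t,x_1,z')\,dz';$$
under Assumption~A set $\bar a^{11}(t):=\mu_d(B_{\kappa r}(y))^{-1}\int_{B_{\kappa r}(y)}a^{11}(t,z)\,\mu_d(dz)$, and under Assumption~A$'$ set $\bar a^{11}(x_1):=\dashint_{Q'_{\kappa r}(s,y')}a^{11}(\tau,x_1,z')\,d\tau\,dz'$. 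These $\bar a^{ij}$ satisfy Assumption~\ref{assum3.1} with the same ellipticity constant $\delta$ and the same sup-norm bound $\delta^{-1}$.

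Rewriting the equation as
$$-u_t+\bar Lu-\lambda u=\tilde f,\qquad \tilde f:=f+(\bar a^{ij}-a^{ij})D_{ij}u,\qquad \bar L:=\bar a^{ij}D_{ij},$$
I would apply Proposition~\ref{prop4.3} with $\alpha=1/2$ to bound the left side of \eqref{eq3.48} by
$$N_0\kappa^{-1/2}\bigl((\sqrt\lambda|u|+|Du|)^p\bigr)^{1/p}_{Q^+_{\kappa r}(Y)}+N_0\kappa^{(d+\theta+2)/p}\bigl(|M\tilde f|^p\bigr)^{1/p}_{Q^+_{\kappa r}(Y)}.$$
Splitting $|M\tilde f|^p\le 2^{p-1}(|Mf|^p+|M(\bar a^{ij}-a^{ij})D_{ij}u|^p)$ and applying H\"older with conjugate exponents $\beta,\beta'$ on the last term gives
$$\bigl(|M(\bar a^{ij}-a^{ij})D_{ij}u|^p\bigr)^{1/p}_{Q^+_{\kappa r}(Y)}\le\bigl(|MD^2u|^{\beta p}\bigr)^{1/(\beta p)}_{Q^+_{\kappa r}(Y)}\bigl(|\bar a^{ij}-a^{ij}|^{\beta'p}\bigr)^{1/(\beta'p)}_{Q^+_{\kappa r}(Y)}.$$
Since $|\bar a^{ij}-a^{ij}|\le 2\delta^{-1}$, the last factor is dominated by $N\bigl(|\bar a^{ij}-a^{ij}|\bigr)^{1/(\beta'p)}_{Q^+_{\kappa r}(Y)}$, which by construction of $\bar a^{ij}$ and Assumption~A (resp.~A$'$) is at most $N_1\varepsilon^{1/(\beta'p)}$. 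Assembling these bounds yields \eqref{eq3.48}.

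The delicate point, and the main obstacle, is that the mean-oscillation hypotheses only provide the bound $(|\bar a^{ij}-a^{ij}|)_{Q^+_{\kappa r}(Y)}\lesssim\varepsilon$ at scales $\kappa r\le\rho y_1$, so the BMO step is immediate only in the ``interior'' regime. For the complementary regime $\kappa r>\rho y_1$, the compact support assumption is essential: since $Q^+_r(Y)\cap\operatorname{supp}u\ne\emptyset$ forces $y_1+r>h-R>h(1-\rho)>0$, we are pinched into either $y_1$ comparable to $h$ (so $\rho y_1\gtrsim h$ and the BMO scale $\kappa r$ can be applied, possibly after harmlessly shifting the center to a point with $x_1\sim h$) or $\kappa r\gtrsim h$, in which case the crude a priori bound $\bigl(|Du-(Du)_{Q^+_r(Y)}|^p\bigr)^{1/p}_{Q^+_r(Y)}\le 2(|Du|^p)^{1/p}_{Q^+_r(Y)}$ combined with a volume comparison between $Q^+_r(Y)$ and $Q^+_{\kappa r}(Y)$ and the $\rho$-dependence absorbed into $N_1$ suffices. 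Making this dichotomy seamless is what requires care; the rest of the argument is the textbook Dong--Kim perturbation template.
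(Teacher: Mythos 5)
Your overall perturbation template is the right one, and your treatment of the ``interior'' regime $\kappa r\le\rho y_1$ (freeze $a^{ij}$ over $Q^+_{\kappa r}(Y)$, apply Proposition~\ref{prop4.3} with $\alpha=1/2$, split $M\tilde f$, H\"older with exponents $\beta,\beta'$, bound the oscillation factor by $N_1\varepsilon^{1/(\beta'p)}$) coincides with the paper's Case 1. You have also correctly identified the obstacle: the hypotheses only control the mean oscillation of $a^{ij}$ at scales $\le\rho y_1$, so the regime $\kappa r>\rho y_1$ needs separate treatment, and the compact support of $u$ must enter.

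The gap is in your resolution of that regime. Your sub-case (b) proposes to use the crude bound $\bigl(|Du-(Du)_{Q^+_r(Y)}|^p\bigr)^{1/p}_{Q^+_r(Y)}\le 2\bigl(|Du|^p\bigr)^{1/p}_{Q^+_r(Y)}$ together with a volume comparison between $Q^+_r(Y)$ and $Q^+_{\kappa r}(Y)$. That comparison costs a factor $\bigl(\mu(Q^+_{\kappa r}(Y))/\mu(Q^+_r(Y))\bigr)^{1/p}\sim\kappa^{(\max(d,\theta)+2)/p}$, so you end up with $N\kappa^{(\max(d,\theta)+2)/p}\bigl(|Du|^p\bigr)^{1/p}_{Q^+_{\kappa r}(Y)}$ on the right. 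This cannot be matched to \eqref{eq3.48}: the only $|Du|$-term there carries the coefficient $N_0\kappa^{-1/2}$, which must tend to zero as $\kappa\to\infty$ (and indeed in Proposition~\ref{prop5.3} one eventually sends $\kappa$ large precisely to kill this term). A coefficient that grows like a positive power of $\kappa$ in front of $\bigl(|Du|^p\bigr)^{1/p}$ makes the inequality useless; the $\rho$-dependence can be absorbed into $N_1$, but $N_1$ only multiplies the $\varepsilon^{1/(\beta'p)}|MD^2u|$ term, not the $|Du|$ term. Your sub-case (a) (``harmlessly shifting the center'') is not developed, and cannot by itself repair matters when the radius $\kappa r$ is itself $\gtrsim h$.

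What the paper actually does in this regime is still a full perturbation argument via Proposition~\ref{prop4.3} — not a crude bound — but with a different choice of freezing: the averaged coefficients $\bar a^{ij}$ are taken over the support cylinder $Q_R(h)$ (for which $R\le\rho h$, so the BMO hypothesis applies there), rather than over $Q^+_{\kappa r}(Y)$. Since $D_{ij}u$ is supported in $Q_R(h)$, the H\"older step produces $\bigl(I_{Q_R(h)}|\bar a-a|^{\beta'p}\bigr)^{1/(\beta'p)}_{Q^+_{\kappa r}(Y)}$, which is transferred to an average over $Q_R(h)$ at the cost of the ratio $\bigl(\mu(Q_R(h))/\mu(Q^+_{\kappa r}(Y))\bigr)^{1/(\beta'p)}$; the key estimate is $\mu(Q^+_{\kappa r}(Y))\ge N(d,\theta,\rho)\mu(Q_R(h))$, proved by an interval-intersection argument using $\kappa r>R(1-\rho)/2$ in this regime. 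This gives a bound by $N_1\varepsilon^{1/(\beta'p)}\bigl(|MD^2u|^{\beta p}\bigr)^{1/(\beta p)}_{Q^+_{\kappa r}(Y)}$, exactly as required, with no spurious $\kappa$-growth on the $|Du|$ term.
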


\begin{proof}
We only treat the case when Assumption A is satisfied. The case when Assumption A$'$ is satisfied is similar. By scaling, we may assume that $h=1$. Obviously, we may also assume that $Q^+_r(Y)\cap Q_R(1)$ is not empty, which implies that $y_1\in (1-R-r,1+R+r)$. We discuss two cases.

{\em Case 1:} $(1+\kappa/\rho)r\le R/\rho-R$. Since $R < \rho$ we have
\begin{equation}
                                        \label{eq1.46}
y_1>R/\rho-R-r\ge \kappa r/\rho.
\end{equation}
In this case, we take $Q=Q_{\kappa r}(Y) = Q_{\kappa r}^+(Y)$.

{\em Case 2:} $(1+\kappa/\rho)r> R/\rho-R$. This along with $\rho< 1 < \kappa$ shows that
\begin{equation}
                            \label{eq1.31}
\kappa r  > (\rho + \kappa)r/2>R(1-\rho)/2.
\end{equation}
In this case, we take $Q=Q_{R}(1)$. We claim that
\begin{equation}
                                        \label{eq1.40}
\mu(Q^+_{\kappa r}(Y))\ge N(d,\theta,\rho)\mu(Q).
\end{equation}
Since $Q_r^+(Y) \cap Q_R(1) \ne \emptyset$, there is a
$z_1 \in \left( (y_1-r)\vee 0, y_1+r \right) \cap (1-R, 1+R)$.
Then
$$
I:= \left(z_1 - \frac{R(1-\rho)}{4}, z_1 + \frac{R(1-\rho)}{4}\right) \cap (1-R, 1+R) \subset \left( (y_1 - \kappa r) \vee 0, y_1 + \kappa r \right).
$$
because for any $x_1 \in I$, by \eqref{eq1.31},
$$
|x_1 - y_1| \le |x_1 - z_1| + |z_1 - y_1|
\le \frac{R(1-\rho)}{4} + r < \kappa r.
$$
We also see that
$|I| \ge \frac{R(1-\rho)}{4}$.
Then we have
\begin{align*}
&\mu\left(Q_{\kappa r}^+(Y)\right) \ge N(d) (\kappa r)^{d+1}\int_I x_1^{\theta-d} \, dx_1\\
&\ge N(d,\rho) R^{d+2} \min\{ (1-\rho)^{\theta-d}, (1+\rho)^{\theta-d}\}
\ge N(d,\theta,\rho) \mu(Q),
\end{align*}
where we used \eqref{eq1.31}, $R < \rho$, and the fact that $I$ is a subset of  $(1-\rho, 1+\rho)$ as well as $\left( (y_1 - \kappa r) \vee 0, y_1 + \kappa r \right)$. Therefore, \eqref{eq1.40} is proved.

Recall \eqref{eq1.46} and $R\in (0,\rho)$. By Assumption A, in both cases the corresponding mean oscillations of $a^{ij}$ in $Q$ are less than $\varepsilon$.
Let
$$
\bar a^{11}(t) = \dashint_{B} a^{11}(t,x) \,\mu_d( dx),
$$
where $B=B_r(y)$ (or $B_R(1)$) if $Q = Q_r^+(Y)$ (or $Q_R(1)$, respectively).
For $(i,j)\neq (1,1)$, let
$$
\bar a^{ij}(t,x_1) = \dashint_{B'} a^{ij}(t,x_1,y') \, dy',
$$
where $B' = B'_r(y')$ (or $B'_R(0)$) if $Q = Q_r^+(Y)$ (or $Q_R(1)$, respectively).
Define $L_0=\bar a^{ij}D_{ij}$. It is clear that
$$
-u_t+L_0 u-\lambda u=f+(\bar a^{ij}-a^{ij})D_{ij}u=:\tilde f.
$$
It follows from Proposition \ref{prop4.3} with $\alpha = 1/2$ that
\begin{align}
                                \label{eq11.31}
\big(|Du-(Du)_{Q^+_r(Y)}|^p\big)^{1/p}_{Q^+_r(Y)}
\le & N_0\kappa^{-1/2}\left((\sqrt \lambda |u|+|Du|)^p \right)^{1/p}_{Q^+_{\kappa r}(Y)}\nonumber\\
&\, +N_0\kappa^{(d+\theta+2)/p}\left(|M\tilde f|^p\right)^{1/p}_{Q^+_{\kappa r}(Y)},
\end{align}
where $N_0=N_0(d,\delta,p,\theta)$.
By the definition of $ \tilde f$, the triangle inequality, and the fact that $u$ vanishes outside $Q_{R}(1)$, we have
\begin{equation}
                                        \label{eq10.56}
\big(|M\tilde f|^p\big)^{1/p}_{Q^+_{\kappa r}(Y)}
\le \big(|Mf|^p\big)^{1/p}_{Q^+_{\kappa r}(Y)}+\left(|M(\bar a^{ij}-a^{ij})I_{Q_R(1)}D_{ij}u |^p\right)^{1/p}_{Q^+_{\kappa r}(Y)}.
\end{equation}
By H\"older's inequality and \eqref{eq1.40}, the last term on the right-hand side above is bounded by
\begin{align}
                                        \label{eq11.01}
&\big(I_{Q_R(1)}|\bar a-a|^{\beta' p}\big)^{1/(\beta' p)}_{Q^+_{\kappa r}(Y)}\big(|MD^2u |^{\beta p}\big)^{1/(\beta p)}_{Q^+_{\kappa r}(Y)}\nonumber\\
&\,\le \left(\mu(Q)/\mu(Q^+_{\kappa r}(Y))\right)^{1/(\beta' p)}
\big(|\bar a-a|^{\beta' p}\big)^{1/(\beta' p)}_{Q}\big(|MD^2u |^{\beta p}\big)^{1/(\beta p)}_{Q^+_{\kappa r}(Y)}\nonumber\\
&\,\le N_1\varepsilon^{1/(\beta'p)}
\big(|MD^2 u|^{\beta p}\big)^{1/(\beta p)}_{Q^+_{\kappa r}(Y)},
\end{align}
where $N_1 = N_1(d,\delta,p,\theta,\beta,\rho)$.
Combining \eqref{eq11.31}, \eqref{eq10.56}, and \eqref{eq11.01}, we obtain \eqref{eq3.48}. The lemma is proved.
\end{proof}

We recall the maximal function theorem and the Fefferman-Stein theorem.
Let
$$
\cQ=\{Q^+_r(z): z=(t,x) \in \bR^{d+1}_+, r \in (0,\infty)\}.
$$
For a function $g$ defined in $\bR^{d+1}_+$,
the weighted (parabolic) maximal and sharp function of $g$ are given by
$$
\cM  g (t,x) = \sup_{Q \in \cQ, (t,x) \in Q} \dashint_{Q} |g(s,y)| \, \mu(dy \,ds),
$$
$$
g^{\#}(t,x) = \sup_{Q \in \cQ, (t,x) \in Q} \dashint_{Q} |g(s,y) -
(g)_{Q}| \, \mu(dy\,ds).
$$
Then for $\theta>d-1$, we have
$$
\| g \|_{\bL_{p,\theta}(\bR^{d+1}_+)} \le N \| g^{\#} \|_{\bL_{p,\theta}(\bR^{d+1}_+)},
\quad
\| \cM  g \|_{\bL_{p,\theta}(\bR^{d+1}_+)} \le N \| g\|_{\bL_{p,\theta}(\bR^{d+1}_+)},
$$
if $g \in \bL_{p,\theta}(\bR^{d+1}_+)$, where $1 < p < \infty$ and $N = N(d,p,\theta)$.
Indeed, the first of the inequalities above is due to the Fefferman-Stein theorem on sharp functions
and the second one to the Hardy-Littlewood maximal function theorem.

Next we prove the $\fH^2_{p,\theta}$-estimate in the special case that the solution $u$ is supported in a cylinder.

\begin{proposition}
                                \label{prop5.3}
Let $h>0$, $\rho\in (1/2,1)$, $\varepsilon \in (0,\varepsilon_0]$, where $\varepsilon_0$ is from Lemma \ref{lem5.1}, and $R\in (0,\rho h)$. Let $u\in \fH^2_{p,\theta}$ be compactly supported on $Q_R(h)$ and $f:=-u_t+L u-\lambda u$.
Then under Assumption A ($\rho,\varepsilon$) or Assumption A$'$ ($\rho,\varepsilon$), we have
\begin{equation}
							\label{eq2.00}
\lambda \| M u \|_{p,\theta}
+ \|u \|_{\fH^2_{p,\theta}}
\le N_0 \|Mf\|_{p,\theta}  + N_1\varepsilon^{1/(\beta'q)} \|MD^2u\|_{p,\theta},
\end{equation}
where $\|\cdot\|_{p,\theta} = \|\cdot\|_{\bL_{p,\theta}(-\infty,\infty)}$, $N_0 = N_0(d,\delta,p,\theta)$, $N_1 = N_1(d, \delta, p,\theta,\rho)$,
and $q, \beta'$ are positive numbers determined by $p$ and $\theta$.
\end{proposition}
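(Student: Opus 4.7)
By Lemma \ref{lem5.1}, the inequality \eqref{eq2.00} will follow once we establish
$$
\|Du\|_{p,\theta}\le N_0\|Mf\|_{p,\theta}+N_1\varepsilon^{1/(\beta'q)}\|MD^2u\|_{p,\theta}.
$$
The task thus reduces to controlling $\|Du\|_{p,\theta}$ via the weighted Fefferman--Stein sharp function theorem combined with the mean oscillation estimate of Lemma \ref{lem5.2}.

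Fix $q\in(1,p)$ and $\beta\in(1,p/q)$ with $\beta'=\beta/(\beta-1)$; both are determined by $p$ and $\theta$, and the condition $\beta q<p$ is what will make the weighted Hardy--Littlewood maximal inequality applicable below. Since $u$ is compactly supported, $u\in\fH^2_{q,\theta}$ automatically, so Lemma \ref{lem5.2} applies with $p$ replaced by $q$. By H\"older's inequality the $L^q$ mean oscillation dominates the $L^1$ mean oscillation with respect to $\mu$, so taking the supremum over cylinders $Q^+_r(Y)\ni X$ gives the pointwise bound
\begin{align*}
(Du)^{\#}(X)&\le N_0\kappa^{-1/2}\bigl[\cM\bigl((\sqrt\lambda|u|+|Du|)^q\bigr)(X)\bigr]^{1/q}\\
&\quad+N_1\kappa^{(d+\theta+2)/q}\varepsilon^{1/(\beta'q)}\bigl[\cM(|MD^2u|^{\beta q})(X)\bigr]^{1/(\beta q)}\\
&\quad+N_0\kappa^{(d+\theta+2)/q}\bigl[\cM(|Mf|^q)(X)\bigr]^{1/q}.
\end{align*}
Applying $\|Du\|_{p,\theta}\le N\|(Du)^{\#}\|_{p,\theta}$ and then the weighted Hardy--Littlewood maximal theorem (valid because both $p/q$ and $p/(\beta q)$ exceed $1$) yields
$$
\|Du\|_{p,\theta}\le N_0\kappa^{-1/2}\bigl(\sqrt\lambda\|u\|_{p,\theta}+\|Du\|_{p,\theta}\bigr)+N_1\kappa^{(d+\theta+2)/q}\varepsilon^{1/(\beta'q)}\|MD^2u\|_{p,\theta}+N_0\kappa^{(d+\theta+2)/q}\|Mf\|_{p,\theta}.
$$

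To close the estimate I would use the pointwise identity $|u|^2=|Mu|\,|M^{-1}u|$ together with AM--GM to write $\sqrt\lambda|u|\le\tfrac12(\lambda|Mu|+|M^{-1}u|)$; invoking Lemma \ref{lem5.1} then produces $\sqrt\lambda\|u\|_{p,\theta}\le N(\|Mf\|_{p,\theta}+\|Du\|_{p,\theta})$. Substituting this back and choosing $\kappa$ large enough (depending only on $d,\delta,p,\theta$) that the total coefficient of $\|Du\|_{p,\theta}$ on the right is at most $1/2$, that term is absorbed into the left, giving the reduced bound on $\|Du\|_{p,\theta}$; combined with Lemma \ref{lem5.1} this yields \eqref{eq2.00}. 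The main obstacle is the $\sqrt\lambda|u|$ term in Lemma \ref{lem5.2}, which couples the target estimate to lower-order quantities not a priori controlled by the right-hand side; the AM--GM step together with a second invocation of Lemma \ref{lem5.1} is what resolves this coupling and permits the final absorption.
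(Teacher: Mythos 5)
Your proposal is correct and follows essentially the same route as the paper's proof: apply Lemma \ref{lem5.2} with a suitable $q<p$ (chosen so that $\beta q<p$ and $\theta<d-1+q$), pass to $\bL_{p,\theta}$ norms via the weighted Fefferman--Stein and Hardy--Littlewood theorems, use the pointwise bound $\sqrt\lambda|u|\le\tfrac12(\lambda|Mu|+|M^{-1}u|)$ together with Lemma \ref{lem5.1} to handle the $\sqrt\lambda|u|$ term, and absorb by taking $\kappa$ large. The only cosmetic difference is the order in which Lemma \ref{lem5.1} is invoked (the paper absorbs the full quantity $\lambda\|Mu\|_{p,\theta}+\|u\|_{\fH^2_{p,\theta}}$ at once, whereas you first isolate $\|Du\|_{p,\theta}$ and then re-invoke Lemma \ref{lem5.1}); both yield the same estimate with the same dependence of constants.
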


\begin{proof}
Let $\kappa\ge 32$  be a constant to be specified. We fix $q\in (1,p)$ and $\beta\in (1,\infty)$, depending only on $p$ and $\theta$, such that $\beta q<p$ and $\theta<d-1+q$. Let $\beta'=\beta/(\beta-1)$. By applying Lemma \ref{lem5.2} with $q$ in place of $p$, we get the following pointwise estimate
\begin{multline*}
(Du)^\#(Y)\le N_0\kappa^{-1/2}\cM^{1/q}\big(\sqrt\lambda |u|+|Du|\big)^q(Y)\\
+N_1\kappa^{(d+\theta+2)/q}
\varepsilon^{1/(\beta' q)}
\cM^{1/(\beta q)}\big(|MD^2 u|^{\beta q}\big)(Y)+
N_0\kappa^{(d+\theta+2)/q}
\cM^{1/q}\big(|Mf|^{q}\big)(Y)
\end{multline*}
for any $Y \in \bR^{d+1}_+$.
This estimate together with the Fefferman-Stein theorem on sharp functions and the Hardy-Littlewood theorem on maximal functions gives
\begin{align}
                                \label{eq2.28}
&\|Du\|_{p,\theta}\le N\|(Du)^\#\|_{p,\theta}\nonumber\\
&\,\le N_0\kappa^{-\frac 12}\big\|\cM^{\frac 1 q}\big(\sqrt\lambda |u|+|Du|\big)^q\big\|_{p,\theta}
+N_1\kappa^{\frac {d+\theta+2} {q}}\varepsilon^{\frac 1 {\beta' q}}
\big\|\cM^{\frac 1 {\beta q}}|MD^2 u|^{\beta q}\big\|_{p,\theta}\nonumber\\
&\quad+N_0\kappa^{\frac {d+\theta+2} {q}}
\big\|\cM^{\frac 1 q}|Mf|^{q}\big\|_{p,\theta}\nonumber\\
&\,\le N_0\kappa^{-\frac 12}\big\|\sqrt\lambda |u|+|Du|\big\|_{p,\theta}
+N_1\kappa^{\frac {d+\theta+2} {q}}
\varepsilon^{\frac 1 {\beta' q}}\big\|
MD^2 u\big\|_{p,\theta}+N_0\kappa^{\frac {d+\theta+2} q} \big\|Mf\big\|_{p,\theta},
\end{align}
where $N_0=N_0(d,\delta,p,\theta)$ and $N_1=N_1(d,\delta,p,\theta,\rho)>0$. Then by Lemma \ref{lem5.1}, \eqref{eq2.28}, and H\"older's inequality, we have
\begin{align}
&\,\lambda \| M u \|_{p,\theta}
+ \|u \|_{\fH^2_{p,\theta}}
\le N_0 \|Mf\|_{p,\theta}+N_0 \| D u \|_{p,\theta}\nonumber\\
&\le N_0 \kappa^{-\frac 12}\big\|\sqrt\lambda |u|+|Du|\big\|_{p,\theta}
+N_1\kappa^{\frac {d+\theta+2} {q}}
\varepsilon^{\frac 1 {\beta' q}}\big\|
MD^2 u\big\|_{p,\theta}+
N_0 \kappa^{\frac {d+\theta+2} q} \big\|Mf\big\|_{p,\theta}
\nonumber
\\
&\le N_0\kappa^{-\frac 12}\big(\lambda \| M u \|_{p,\theta}
+ \|u \|_{\fH^2_{p,\theta}}\big)
+N_1 \kappa^{\frac {d+\theta+2} {q}}
\varepsilon^{\frac 1 {\beta' q}}\big\|
MD^2 u\big\|_{p,\theta}+ N_0 \kappa^{\frac{d+\theta+2} q} \big\|Mf\big\|_{p,\theta}.
\nonumber
\end{align}
To complete the proof of \eqref{eq2.00}, it suffices for us to choose $\kappa$ sufficiently large depending only on $d$, $\delta$, $p$, and $\theta$.
\end{proof}

We are now in the position to prove Theorem \ref{thm1}.

\begin{proof}[Proof of Theorem \ref{thm1}]
For the first and second assertions, by the method of continuity it is enough to prove the a priori estimate \eqref{eq9.23}.

{\em Case 1: $T=\infty$.}
First we prove the case $b^i = c=0$.
We fix a number $\varepsilon_2>0$ to be specified below depending only on $d$, $\delta$, $p$, and $\theta$.
By Lemma 5.6 in \cite{MR2990037} (see also Lemma 3.3 in \cite{MR2111792}), there exist $\rho = \rho(\varepsilon_2) \in (1/2,1)$ and nonnegative $\eta_k \in C_0^\infty(\bR^{d+1}_+)$, $k=1,2,\ldots$, such that
\begin{equation}
							\label{eq1230_3}
\sum_k \eta_k^p \ge 1,
\quad
\sum_k \eta_k \le N(d),
\quad
\sum_k \left(M |D\eta_k| + M^2 |D^2 \eta_k| + M^2 |D_t \eta_k| \right) \le \varepsilon_2^p
\end{equation}
on $\bR^{d+1}_+$
and, for each $k$, there exist $r>0$ and a point $(t,x) \in \bR^{d+1}_+$ such that $r \le \rho x_1$ and $\text{supp} \, \eta_k \subset Q_r(t,x)$.
Observe that $u_k : = u \eta_k$ satisfies
$$
-{u_k}_t + a^{ij}D_{ij} u_k  - \lambda u_k = f \eta_k + 2 a^{ij} D_i u D_j \eta_k + u a^{ij} D_{ij} \eta_k  - u D_t \eta_k
$$
in $\bR^{d+1}_+$.
Then using a translation and Proposition \ref{prop5.3} with $\varepsilon \in (0,\varepsilon_0]$ there, we get
\begin{align*}
\lambda \|M u_k\|_{p,\theta}
+ \|u_k\|_{\fH_{p,\theta}^2}&\le N_0 \|M f \eta_k\|_{p,\theta}
+ N_0 \|M Du D\eta_k\|_{p,\theta} + N_0\|M u D^2 \eta_k \|_{p,\theta}\\
&\quad + N_0 \|M u D_t \eta_k\|_{p,\theta} + N_1\varepsilon^{1/(\beta'q)} \|MD^2u_k\|_{p,\theta},
\end{align*}
where $N_0=N_0(d,\delta,p,\theta)$, $N_1 = N_1(d,\delta,p,\theta,\rho)$, and $q, \beta'$ are positive numbers determined by $p$ and $\theta$.
From this and the properties of $\eta_k$ in \eqref{eq1230_3}, we obtain
\begin{align*}
&\lambda \|M u\|_{p,\theta}
+ \|u\|_{\fH_{p,\theta}^2}\le N_0 \|M f\|_{p,\theta}
+ N_0 \varepsilon_2 \left(\|Du\|_{p,\theta} + \|M^{-1}u\|_{p,\theta}\right)\\
&\quad
+N_1\varepsilon^{1/(\beta'q)} \big(\|MD^2u\|_{p,\theta}
+\varepsilon_2 \|Du\|_{p,\theta} + \varepsilon_2\|M^{-1}u\|_{p,\theta}\big).
\end{align*}
We now first choose $\varepsilon_2 \in (0,1)$ sufficiently small depending only on  $d$, $\delta$, $p$, and $\theta$ such that $N_0 \varepsilon_2 < 1/3$, then choose $\rho=\rho(\varepsilon_2)\in (1/2,1)$ such that \eqref{eq1230_3} is satisfied, and finally $\varepsilon = \varepsilon(d,\delta,p,\theta,\rho) \in (0,\varepsilon_0]$ so that
$$
N_1 \varepsilon^{1/(\beta' q)} < 1/3.
$$
Then the above inequality implies \eqref{eq9.23}.

To deal with equations with non-trivial $b^i$ and $c$, we write
\begin{equation}
							\label{eq1231_1}
-u_t + a^{ij} D_{ij} u - \lambda u = f - b^i D_i u  - cu.
\end{equation}
By choosing an appropriate $\varepsilon_1 = \varepsilon_1(d,\delta,p,\theta) > 0$ in \eqref{eq9.34} and using the estimate proved above for the case $b^i=c=0$, we again obtain \eqref{eq9.23}.

{\em Case 2: $T<\infty$.} Certainly we may assume that $T=0$ by shifting the $t$-coordinate. Take the even extensions of $u$, $f$, $a^{ij}$, $b^i$, and $c$ with respect to $t=0$. Then $u\in \fH_{p,\theta}^2(-\infty,\infty)$. Let $v\in \fH_{p,\theta}^2(-\infty,\infty)$ be the solution of
\begin{equation}
                            \label{eq4.19}
- v_t + a^{ij} D_{ij} v +b^i D_i v+ c v- \lambda v=f 1_{t<0}
\end{equation}
in $\bR^{d+1}_+$. Then $w:=u-v\in \fH_{p,\theta}^2(-\infty,\infty)$ satisfies
$$
- w_t + a^{ij} D_{ij} w +b^i D_iw+ c w- \lambda w=0
$$
in $(-\infty,0)\times \bR^{d}_+$. By the property of the heat equation and the method of continuity, we conclude that $w=0$ for $t<0$, and thus the a priori estimate for $u$. For more details, see Theorem 6.4.1 in \cite{MR2435520}. Finally, the solvability is evident by the argument above in view of \eqref{eq4.19}.

Now we prove the last assertion. Write the equation as in \eqref{eq1231_1}, and use the estimate proved above as well as the boundedness of the lower order coefficients on $x_1 \in (\sigma,\infty)$ to get
$$
\lambda \|M u\|_{p,\theta}
+ \|u\|_{\fH_{p,\theta}^2}\le N_2 \|M f\|_{p,\theta}
+ N_3 \left(\|M Du\|_{p,\theta} + \|M u\|_{p,\theta}\right),
$$
where $N_2 = N_2(d,\delta,p,\theta)$ and $N_3=N_3(d,\delta,p,\theta, K)$. We use the interpolation inequality
$$
\|M Du\|_{p,\theta}
\le \varepsilon_3 \|M D^2 u\|_{p,\theta} + N(\varepsilon_3) \|M u\|_{p,\theta}.
$$
Upon choosing an appropriate $\varepsilon_3 > 0$, we get
$$
(\lambda - N_4) \|M u\|_{p,\theta} + \frac{1}{2} \|u\|_{\fH_{p,\theta}^2} \le N_2 \|M f\|_{p,\theta},
$$
where $N_4=N_4(d,\delta,p,\theta,K)$.
Finally, we choose a sufficiently large constant $\lambda_0 = \lambda_0(d,\delta,p,\theta,K)$ so that $\lambda - N_4 \ge \lambda/2$ for $\lambda \ge \lambda_0$.
\end{proof}


\section{Proof of Theorem \ref{thm2}}
							\label{sec06}

Throughout this section, we denote $\cL u = D_i(a^{ij}D_j u)$ and assume $p\in (1,\infty)$, $\lambda\ge 0$, and $\theta\in (d-1,d-1+p)$. First we estimate $D_{x'} u$.
Following exactly the proof of Lemma \ref{lem5.2} with Proposition \ref{prop4.3d} in place of Proposition \ref{prop4.3}, we obtain

\begin{lemma}
                            \label{lem6.1}
Let $h>0$, $\rho\in (1/2,1)$, $\varepsilon\in (0,1)$, $R\in (0,\rho h)$, $\kappa\ge 32$, and $\beta,\beta'\in (1,\infty)$ satisfying $1/\beta+1/\beta'=1$. Let $u\in \cH^{1,\lambda}_{p,\theta}(-\infty,\infty)$ be compactly supported on $Q_{R}(h)$ and satisfy
$$
-u_t + \cL u - \lambda u = D_i g_i + f
$$
in $\bR^{d+1}_+$,
where $g=(g_1,\ldots,g_d), f \in \bL_{p,\theta}(-\infty,\infty)$
and $f \equiv 0$ if $\lambda = 0$.
Then under Assumption A ($\rho,\varepsilon$) or Assumption A$'$ ($\rho,\varepsilon$), for any $r > 0$ and $Y=(s,y) \in \overline{\bR^{d+1}_+}$, we have
\begin{align*}
&\big(|D_{x'}u-(D_{x'}u)_{Q^+_r(Y)}|^p\big)^{1/p}_{Q^+_r(Y)}\le N_0\kappa^{-1/2}\left( |Du|^p\right)^{1/p}_{Q^+_{\kappa r}(Y)}\nonumber\\
&\,+N_1\kappa^{(d+\theta+2)/p}
\varepsilon^{1/(\beta'p)}
\left(|Du|^{\beta p}\right)_{Q^+_{\kappa r}(Y)}^{1/(\beta p)} +N_0 \kappa^{(d+\theta+2)/p} \left(|g|^p + \lambda^{-\frac p 2} |f|^p \right)^{1/p}_{Q^+_{\kappa r}(Y)},
\end{align*}
where $N_0=N_0(d,\delta,p,\theta)$ and $N_1 = N_1(d,\delta,p,\theta,\beta,\rho) > 0$ are constants.
\end{lemma}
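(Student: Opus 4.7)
The plan is to imitate the proof of Lemma \ref{lem5.2} verbatim, only replacing the non-divergence perturbation $(\bar a^{ij}-a^{ij})D_{ij}u$ by the divergence-form perturbation $D_i\bigl((\bar a^{ij}-a^{ij})D_j u\bigr)$, and invoking Proposition \ref{prop4.3d} in place of Proposition \ref{prop4.3}. As before, I would assume by scaling that $h=1$, assume without loss that $Q_r^+(Y)\cap Q_R(1)\neq\emptyset$ (so $y_1\in(1-R-r,1+R+r)$), and split into the same two geometric cases:

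\textbf{Case 1:} $(1+\kappa/\rho)r\le R/\rho-R$, for which $y_1>\kappa r/\rho$ and so $Q_{\kappa r}(Y)=Q_{\kappa r}^+(Y)$; take $Q=Q_{\kappa r}(Y)$. \textbf{Case 2:} $(1+\kappa/\rho)r>R/\rho-R$, in which $\kappa r>R(1-\rho)/2$; take $Q=Q_R(1)$ and verify $\mu(Q_{\kappa r}^+(Y))\ge N(d,\theta,\rho)\mu(Q)$ exactly as in the proof of Lemma \ref{lem5.2}. In both cases, Assumption A or A$'$ guarantees that the relevant weighted mean oscillations of $a^{ij}$ on $Q$ are bounded by $\varepsilon$.

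Next I would define the averaged coefficients $\bar a^{11}(t)$ and $\bar a^{ij}(t,x_1)$ for $(i,j)\neq(1,1)$ by integrating over $B$ and $B'$ respectively (as in Lemma \ref{lem5.2}), and set $\cL_0=D_i(\bar a^{ij}D_j\cdot)$. These averaged coefficients satisfy Assumption \ref{assum3.1}, so Proposition \ref{prop4.3d} applies. Writing
\begin{equation*}
-u_t+\cL_0 u-\lambda u=D_i\tilde g_i+f,\qquad \tilde g_i:=g_i+(\bar a^{ij}-a^{ij})D_j u,
\end{equation*}
and applying Proposition \ref{prop4.3d} with $\alpha=1/2$ yields
\begin{equation*}
\bigl(|D_{x'}u-(D_{x'}u)_{Q_r^+(Y)}|^p\bigr)^{1/p}_{Q_r^+(Y)}
\le N_0\kappa^{-1/2}\bigl(|Du|^p\bigr)^{1/p}_{Q_{\kappa r}^+(Y)}
+N_0\kappa^{(d+\theta+2)/p}\bigl(|\tilde g|^p+\lambda^{-p/2}|f|^p\bigr)^{1/p}_{Q_{\kappa r}^+(Y)}.
\end{equation*}

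It remains to handle $\tilde g$. Since $u$ is supported in $Q_R(1)$, the triangle inequality and H\"older's inequality (exactly as in the derivation of \eqref{eq10.56}--\eqref{eq11.01}) give
\begin{equation*}
\bigl(|(\bar a^{ij}-a^{ij})I_{Q_R(1)}D_j u|^p\bigr)^{1/p}_{Q_{\kappa r}^+(Y)}
\le \bigl(\mu(Q)/\mu(Q_{\kappa r}^+(Y))\bigr)^{1/(\beta'p)}\bigl(|\bar a-a|^{\beta'p}\bigr)^{1/(\beta'p)}_{Q}\bigl(|Du|^{\beta p}\bigr)^{1/(\beta p)}_{Q_{\kappa r}^+(Y)}.
\end{equation*}
The ratio $\mu(Q)/\mu(Q_{\kappa r}^+(Y))$ is bounded by $N(d,\theta,\rho)$ in both cases, and the mean oscillation factor is controlled by $N\varepsilon^{1/(\beta'p)}$, leading to the term $N_1\kappa^{(d+\theta+2)/p}\varepsilon^{1/(\beta'p)}(|Du|^{\beta p})^{1/(\beta p)}_{Q_{\kappa r}^+(Y)}$. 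Combining with the contribution of $g$ and $f$ produces the desired estimate.

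There is essentially no genuinely new difficulty here: the main point is that the perturbation is now one derivative lower than in the non-divergence case, which is precisely what matches the form of the right-hand side in Proposition \ref{prop4.3d}. The only mild subtlety is the verification of the measure comparison in Case 2, but that computation is identical to the one in the proof of Lemma \ref{lem5.2} and uses nothing about the form (divergence vs.\ non-divergence) of the equation.
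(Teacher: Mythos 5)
Your proposal is correct and matches the paper's approach exactly: the paper states that Lemma \ref{lem6.1} follows "by following exactly the proof of Lemma \ref{lem5.2} with Proposition \ref{prop4.3d} in place of Proposition \ref{prop4.3}," and your detailed derivation (same two geometric cases, same averaged coefficients, writing $\tilde g_i = g_i + (\bar a^{ij}-a^{ij})D_j u$, the same H\"older and measure-comparison steps) is precisely that argument carried out.
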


Using Lemma \ref{lem6.1} and following the proof of Proposition \ref{prop5.3}, we prove the proposition below. Unlike Proposition \ref{prop5.3} we do not choose a specific $\kappa \ge 32$ yet, which will be specified later after we have estimates for both $D_{x'}u$ and $D_1 u$.

\begin{lemma}
\label{lem0102_1}
Let $h>0$, $\rho\in (1/2,1)$, $\varepsilon \in (0,1)$, $R\in (0,\rho h)$, and $\kappa \ge 32$. Let $u\in \cH^{1,\lambda}_{p,\theta}(-\infty,\infty)$ be compactly supported on $Q_R(h)$ and satisfy
$$
-u_t + \cL u - \lambda u = D_i g_i + f
$$
in $\bR^{d+1}_+$,
where $g=(g_1,\ldots,g_d), f \in \bL_{p,\theta}(-\infty,\infty)$
and $f \equiv 0$ if $\lambda = 0$.
Then under Assumption A ($\rho,\varepsilon$) or Assumption A$'$ ($\rho,\varepsilon$), we have
\begin{multline*}
\|D_{x'} u\|_{p,\theta}
\le N_0 \kappa^{-1/2} \|Du\|_{p,\theta}
+ N_0 \kappa^{(d+\theta+2)/q} \left(\|g\|_{p,\theta}
+ \lambda^{-1/2} \|f\|_{p,\theta}\right)
\\
+ N_1 \kappa^{(d+\theta+2)/q}\varepsilon^{1/(\beta'q)} \|Du\|_{p,\theta},
\end{multline*}
where $\|\cdot\|_{p,\theta}=\|\cdot\|_{\bL_{p,\theta}(-\infty,\infty)}$, $N_0 = N_0(d,\delta,p,\theta)$, $N_1 = N_1(d, \delta, p,\theta,\rho) > 0$, and $q, \beta'$ are positive numbers determined by $p$ and  $\theta$.
\end{lemma}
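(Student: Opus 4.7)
The plan is to imitate the proof of Proposition \ref{prop5.3}, using Lemma \ref{lem6.1} in place of Lemma \ref{lem5.2}. Unlike Proposition \ref{prop5.3}, where $\kappa$ eventually had to be chosen large enough to absorb an $\|MD^2u\|_{p,\theta}$ term back into the left-hand side, here we produce only a bound for $\|D_{x'}u\|_{p,\theta}$, so $\kappa$ is left as a free parameter, to be pinned down later once $D_1 u$ has also been handled.

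First I would fix exponents $q \in (1,p)$ and $\beta \in (1,\infty)$, depending only on $p$ and $\theta$, such that $\beta q < p$ and $\theta < d-1+q$, and set $\beta' = \beta/(\beta-1)$. These choices are available because $\theta \in (d-1, d-1+p)$: take $q$ slightly larger than $\theta-d+1$ but strictly less than $p$, and then $\beta>1$ close enough to $1$ that $\beta q < p$. Under these conditions the weighted Hardy-Littlewood maximal operator $\cM$ with respect to $\mu$ is bounded on $\bL_{p/q,\theta}$ and on $\bL_{p/(\beta q),\theta}$, since the weight $x_1^{\theta-d}$ lies in the relevant Muckenhoupt class (cf.\ \cite{MR2435520}).

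Next, applying Lemma \ref{lem6.1} with $q$ in place of $p$ and taking the supremum over all cylinders containing a given $Y \in \bR^{d+1}_+$ yields the pointwise sharp function estimate
\begin{equation*}
\begin{aligned}
(D_{x'}u)^\#(Y)
&\le N_0 \kappa^{-1/2}\, \cM^{1/q}\bigl(|Du|^q\bigr)(Y) \\
&\quad + N_1 \kappa^{(d+\theta+2)/q} \varepsilon^{1/(\beta'q)}\, \cM^{1/(\beta q)}\bigl(|Du|^{\beta q}\bigr)(Y) \\
&\quad + N_0 \kappa^{(d+\theta+2)/q}\, \cM^{1/q}\bigl(|g|^q + \lambda^{-q/2}|f|^q\bigr)(Y),
\end{aligned}
\end{equation*}
valid for every $Y \in \bR^{d+1}_+$.

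Finally, the weighted Fefferman-Stein theorem bounds $\|D_{x'}u\|_{p,\theta}$ by $N\|(D_{x'}u)^\#\|_{p,\theta}$, and applying the weighted Hardy-Littlewood maximal function theorem termwise to the right-hand side gives $\|\cM^{1/q}(|Du|^q)\|_{p,\theta} \le N\|Du\|_{p,\theta}$, $\|\cM^{1/(\beta q)}(|Du|^{\beta q})\|_{p,\theta} \le N\|Du\|_{p,\theta}$, and $\|\cM^{1/q}(|g|^q + \lambda^{-q/2}|f|^q)\|_{p,\theta} \le N(\|g\|_{p,\theta} + \lambda^{-1/2}\|f\|_{p,\theta})$ (the $1/q$-th power is what turns the inner $\lambda^{-q/2}$ into $\lambda^{-1/2}$). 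Assembling these inequalities produces the claimed bound. The only real care point is the exponent bookkeeping at the start, namely the existence of admissible $q$ and $\beta$, which is routine; there is no fixed-point absorption step to manage, since the right-hand side features $\|Du\|_{p,\theta}$ rather than $\|D_{x'}u\|_{p,\theta}$.
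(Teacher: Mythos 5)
Your proposal is correct and matches the paper's intended argument, which is exactly "apply Lemma \ref{lem6.1} with $q$ in place of $p$, pass to a pointwise sharp-function estimate, and invoke the weighted Fefferman--Stein and Hardy--Littlewood theorems, leaving $\kappa$ free." The only cosmetic deviation is that you motivate the weighted maximal-function bound via Muckenhoupt weights, whereas the paper simply cites the ready-made weighted theorems for $\theta>d-1$; the substance and the exponent bookkeeping ($q\in(1,p)$ with $\theta<d-1+q$, $\beta>1$ with $\beta q<p$) are identical.
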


Next we shall estimate $D_1 u$ by using a scaling argument. Theorem \ref{th081201} below is from \cite{MR2540989}
and can be considered as a generalized version of the Fefferman-Stein Theorem.
To state the theorem,
let
$$
\bC_l = \{ C_l(i_0, i_1, \ldots, i_d), i_0, i_1, \ldots, i_d \in \bZ ,i_1\ge 0\},
\quad l \in \bZ
$$
be the collection of
partitions given by parabolic dyadic cubes in $\bR^{d+1}_+$
\begin{equation*}
[ i_0 2^{-2l}, (i_0+1)2^{-2l} ) \times [ i_1 2^{-l}, (i_1+1)2^{-l} ) \times \ldots \times [ i_d 2^{-l}, (i_d+1)2^{-l} ).
\end{equation*}
Clearly, $(\bC_l,l\in \bZ)$ and the measure $\mu(dx\,dt)=x_1^{\theta-d}\,dx\,dt$ satisfy the conditions in Definition 2.1 of \cite{MR2540989} provided that $\theta>d-1$.
\begin{theorem}                         \label{th081201}
Let $p \in (1, \infty)$, and $W,V,F\in L_{1,\text{loc}}(\mu,\bR^{d+1}_+)$.
Assume that we have $|W| \le V$
and, for each $l \in \bZ$ and $C \in \bC_l$,
there exists a measurable function $U^C$ on $C$
such that $|W| \le U^C \le V$ on $C$ and
\begin{equation*}                            
\int_C |U^C - \left(U^C\right)_C| \,\mu(dx\,dt)
\le \int_C F(t,x) \,\mu(dx\,dt).
\end{equation*}
Then
$$
\| W\|_{L_p(\mu,\bR^{d+1}_+)}^p
\le N(d,p) \|F\|_{L_p(\mu,\bR^{d+1}_+)}\| V \|_{L_p(\mu,\bR^{d+1}_+)}^{p-1},
$$
provided that $F,V\in L_p(\mu,\bR^{d+1}_+)$.
\end{theorem}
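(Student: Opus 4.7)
The statement is a generalized Fefferman--Stein sharp function inequality, in which the role of the classical sharp function is played by the $\mu$-averages of $F$ (via the oscillation hypothesis on $U^C$) while the pointwise bound $|W| \le V$ replaces the usual ambient $L_p$-size control. My plan is to treat $(\bC_l, l\in \bZ)$ together with the measure $\mu$ as a filtered measure space and to mimic the standard Fefferman--Stein good-$\lambda$ argument.

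First I would record the standard dyadic martingale tools, which are available because $\mu$ satisfies the dyadic doubling property $\mu(C_l) \le N(d,\theta)\, \mu(C_{l+1})$ whenever $C_{l+1} \subset C_l$ (using $\theta > d-1$). These are Doob's maximal inequality
\begin{equation*}
\big\| \cM^{\text{dy}} G \big\|_{L_p(\mu, \bR^{d+1}_+)} \le N_p\, \| G \|_{L_p(\mu, \bR^{d+1}_+)}, \qquad \cM^{\text{dy}} G(x) := \sup_{l \in \bZ} (|G|)_{C_l(x)},
\end{equation*}
where $C_l(x)$ denotes the unique cube in $\bC_l$ containing $x$, together with the Lebesgue differentiation theorem $|G(x)| = \lim_{l \to \infty} (|G|)_{C_l(x)}$ for $\mu$-a.e.\ $x$.

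The heart of the proof is a distributional good-$\lambda$ inequality. For each $t > 0$, I select the maximal dyadic cubes $\{C_j(t)\}$ along which $(|W|)_{C_j(t)} > t$; then $\{|W| > t\} \subset \bigcup_j C_j(t)$ up to a $\mu$-null set. On each $C_j(t)$, the hypothesis $|W| \le U^{C_j(t)}$ together with the oscillation bound and Chebyshev's inequality controls $\mu\big(\{|W| > 2t\} \cap C_j(t)\big)$ in terms of $t^{-1} \int_{C_j(t)} F\, d\mu$, modulated by $(U^{C_j(t)})_{C_j(t)}$, which is sandwiched between $(|W|)_{C_j(t)} > t$ and $(V)_{C_j(t)}$. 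After summing over $j$, multiplying by $p t^{p-2}$, and integrating in $t$ over $(0, \infty)$, Fubini and Doob's inequality convert the left-hand side into $\|W\|_{L_p(\mu, \bR^{d+1}_+)}^p$ (up to a constant factor), while the right-hand side becomes a constant times $\int F\, V^{p-1}\, d\mu$, to which H\"older's inequality is finally applied to recover $\|F\|_{L_p(\mu, \bR^{d+1}_+)}\, \|V\|_{L_p(\mu, \bR^{d+1}_+)}^{p-1}$.

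The main obstacle is that the auxiliary functions $U^C$ depend on $C$, so the family $\{(U^{C_l(x)})_{C_l(x)}\}_{l}$ is not a martingale and the classical telescoping argument does not apply. The sandwich $(|W|)_C \le (U^C)_C \le (V)_C$ is what rescues the argument, keeping this sequence between two bona fide dyadic averages whose $L_p$ behavior is controlled by Doob's inequality; this is precisely what enables the reduction from the cube-level oscillation bound to the global $L_p$ bound.
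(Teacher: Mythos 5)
The paper does not prove Theorem \ref{th081201}: it is quoted verbatim from Krylov's paper \cite{MR2540989} (Theorem 2.4 there), so there is no internal proof to compare against. Your sketch is in the right spirit (dyadic Calder\'on--Zygmund decomposition, Chebyshev on the oscillation, then Fubini, Doob, and H\"older), but as written it contains a gap at the central step, and the gap is exactly where the asymmetric product $\|F\|_p\|V\|_p^{p-1}$ must come from.

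You propose to stop on maximal dyadic cubes $C_j$ with $(|W|)_{C_j}>t$. On such a cube the only information you have about $c_j:=(U^{C_j})_{C_j}$ is $t<c_j\le (V)_{C_j}$. The Chebyshev step
\[
\mu\bigl(\{|W|>At\}\cap C_j\bigr)\le\mu\bigl(\{U^{C_j}-c_j>At-c_j\}\cap C_j\bigr)\le\frac{1}{At-c_j}\int_{C_j}F\,d\mu
\]
is useless unless $c_j$ is uniformly $\le \text{(const)}\cdot t$, and maximality over $W$-averages gives no upper bound at all on $(V)_{C_j}$, hence none on $c_j$. You allude to this (``modulated by $(U^{C_j})_{C_j}$, which is sandwiched between $(|W|)_{C_j}>t$ and $(V)_{C_j}$''), but the sandwich only produces a lower bound on $c_j$; the upper side $(V)_{C_j}$ is uncontrolled. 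Trying to split into the two cases $(V)_{C_j}\lessgtr At/2$ and bounding the bad case by $\mu(C_j\cap\{\cM^{\mathrm{dy}}V>At/2\})$ only yields $\|W\|_p^p\le N\|F\|_p^p+N\|V\|_p^p$, not the stated product. Moreover your sketch claims the right-hand side becomes $\int F\,V^{p-1}\,d\mu$, but with a decomposition over $W$-averages the Fubini step would produce $\int F(\cM^{\mathrm{dy}}|W|)^{p-1}\,d\mu$, not a $V^{p-1}$; the sketch is internally inconsistent at that point.

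The fix is to stop on the \emph{other} function: for each $t$, take the maximal dyadic cubes $C_j$ with $(V)_{C_j}>t$. Maximality plus dyadic doubling of $\mu$ (here using $\theta>d-1$) gives $(V)_{C_j}\le\kappa t$, hence $(U^{C_j})_{C_j}\le(V)_{C_j}\le\kappa t$ uniformly. Since $|W|\le V\le\cM^{\mathrm{dy}}V$ $\mu$-a.e.\ by Lebesgue differentiation, $\{|W|>2\kappa t\}\subset\bigcup_j C_j$, and on each $C_j$ the pointwise bound $|W|\le U^{C_j}$ together with $(U^{C_j})_{C_j}\le\kappa t$ and Chebyshev gives $\mu(\{|W|>2\kappa t\}\cap C_j)\le(\kappa t)^{-1}\int_{C_j}F\,d\mu$. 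Summing over the disjoint $C_j$, multiplying by $pt^{p-1}$, integrating in $t$, and applying Fubini produces $\|W\|_p^p\le N\int F(\cM^{\mathrm{dy}}V)^{p-1}\,d\mu$, and H\"older plus Doob finish. Note that this is a direct weak-to-strong integration, not a good-$\lambda$ inequality: once the decomposition is over $V$ there is no residual term to absorb, and the auxiliary hypothesis $|W|\le U^C\le V$ is used precisely to transfer the oscillation bound on $U^C$ to a level-set bound on $W$ without ever having to telescope the $C$-dependent family $(U^C)_C$.
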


\begin{lemma}
                            \label{lem6.2}
Let $h>0$, $\rho\in (1/2,1)$, $\varepsilon\in (0,1)$, $R\in (0,\rho h)$, and $\beta,\beta'\in (1,\infty)$ satisfying $1/\beta+1/\beta'=1$. Let $u\in \cH^{1,\lambda}_{p,\theta}(-\infty,\infty)$ be compactly supported on $Q_{R}(h)$ and satisfy
$$
-u_t + D_1(a^{11}D_1u) + \Delta_{x'} u - \lambda u = D_i g_i + f
$$
in $\bR^{d+1}_+$,
where $g=(g_1,\ldots,g_d), f \in \bL_{p,\theta}(-\infty,\infty)$
and $f \equiv 0$ if $\lambda = 0$.
Then under Assumption A ($\rho,\varepsilon$) or Assumption A$'$ ($\rho,\varepsilon$),
for each $C \in \bC_l$, $l \in \bZ$, and $\kappa\ge 32$,
there exists a function $U^C$ defined on $C$ such that
\begin{equation}
							\label{eq1231_6}
\delta |D_1 u| \le U^C \le  \delta^{-1} |D_1 u|
\end{equation}
on $C$ and
\begin{equation*}
\left(|U^C-(U^C)_{C}|^p\right)^{1/p}_C\le N_0 \left(F_{0,\kappa}\right)_C + N_1\left(F_{1,\kappa}\right)_C,
\end{equation*}
where $N_0=N(d,\delta,p,\theta)$, $N_1=N(d,\delta,p,\theta,\beta,\rho)> 0$, and
$$
F_{0,\kappa} = \kappa^{-1/2}
\cM^{1/p} \left( |D_1u|^p \right)
+\kappa^{(d+\theta+2)/p}\left(\cM^{1/p}(|g|^p)
 + \lambda^{-1/2} \cM^{1/p} (|f|^p) \right),
$$
$$
F_{1,\kappa} = \kappa^{(d+\theta+2)/p}\varepsilon^{1/(\beta'p)} \cM^{1/(\beta p)} \left(|D_1 u|^{\beta p}\right).
$$
\end{lemma}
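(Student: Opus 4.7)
The plan is to mirror the argument of Lemma~\ref{lem5.2}, replacing Proposition~\ref{prop4.3} by Proposition~\ref{prop4.12}, and to exploit the cube-by-cube flexibility afforded by Theorem~\ref{th081201}: we are free to choose $U^{C}$ differently on each dyadic cube, so we freeze the coefficient $a^{11}$ on each cube to a simple function of a single variable matched to the prevailing mean-oscillation hypothesis. Given $C \in \bC_l$, fix $r \sim 2^{-l}$ and $Y = (s,y) \in \overline{\bR^{d+1}_+}$ with $C \subset Q^+_r(Y)$ and $\mu(C)$ comparable to $\mu(Q^+_r(Y))$. As in Lemma~\ref{lem5.2}, I would split into two cases depending on whether $(1+\kappa/\rho) r \le R/\rho - R$ or not, thereby selecting a cube $Q$ (either $Q^+_{\kappa r}(Y)$ or $Q_R(h)$) on which $a^{11}$ is close in mean oscillation to a simple function, where ``simple'' means $\bar a^{11}(t)$ under Assumption~A and $\bar a^{11}(x_1)$ under Assumption~A$'$.

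Rewriting the equation with the frozen coefficient gives
\begin{equation*}
-u_{t} + D_{1}(\bar a^{11} D_{1} u) + \Delta_{x'} u - \lambda u = D_{i}\tilde g_{i} + f, \qquad \tilde g_{1} = g_{1} + (\bar a^{11} - a^{11}) I_{Q_{R}(h)} D_{1} u,
\end{equation*}
with $\tilde g_{i} = g_{i}$ for $i\ge 2$. Since $\bar a^{11}$ satisfies Assumption~\ref{assum3.1}, Proposition~\ref{prop4.12} with $\alpha = 1/2$ applies on $Q^+_{r}(Y)$ and yields a mean oscillation bound for the auxiliary function $U$, which is $D_{1} u$ in case~A and $\bar a^{11}(x_{1}) D_{1} u$ in case~A$'$. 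The perturbation piece $\tilde g_{1} - g_{1}$ is controlled by H\"older's inequality with exponents $\beta, \beta'$: the $L^{\beta' p}$-norm of $a^{11} - \bar a^{11}$ on $Q$ is at most $N\,\varepsilon^{1/(\beta' p)}$ by Assumption~A (respectively A$'$), while the volume ratio $\mu(Q)/\mu(Q^+_{\kappa r}(Y))$ contributes only a constant depending on $\rho$, exactly as in the proof of Lemma~\ref{lem5.2}.

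To translate this into the required cube bound, define $U^{C} := |U|$ on $C$, so that $U^{C} = |D_{1} u|$ in case~A and $U^{C} = \bar a^{11}(x_{1})\,|D_{1} u|$ in case~A$'$. The ellipticity bounds $\delta \le \bar a^{11} \le \delta^{-1}$ together with $\delta \le 1$ give $\delta|D_{1} u| \le U^{C} \le \delta^{-1}|D_{1} u|$, and the reverse triangle inequality $\bigl||a|-|b|\bigr| \le |a-b|$ transfers the mean oscillation estimate for $U$ on $Q^+_{r}(Y)$ to one for $U^{C}$ on $C$, up to the factor $\mu(Q^+_{r}(Y))/\mu(C) \sim 1$. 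Averages over $Q^+_{\kappa r}(Y)$ on the right-hand side are majorized pointwise on $C$ by the weighted Hardy–Littlewood maximal function $\cM$, producing the combinations $F_{0,\kappa}$ and $F_{1,\kappa}$ in the stated form.

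The main obstacle is the two-case bookkeeping needed to ensure that the ball used to define $\bar a^{11}$ sits inside the region where the mean oscillation hypothesis on $a^{11}$ genuinely delivers the $\varepsilon$ smallness, including the boundary subcase $y_{1}=0$ where Proposition~\ref{prop4.12} still applies; this is the divergence analogue of the case split carried out explicitly in Lemma~\ref{lem5.2}. Once this is settled, the remaining ingredients are parallel to those of Lemmas~\ref{lem5.2} and~\ref{lem6.1}, and the details may be safely omitted.
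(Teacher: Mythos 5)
Your proposal is correct and follows essentially the same route as the paper: repeat Lemma~\ref{lem5.2} with Proposition~\ref{prop4.12} in place of Proposition~\ref{prop4.3}, freeze $a^{11}$ to $\bar a^{11}(t)$ under Assumption~A (so $U = D_1 u$) or to $\bar a^{11}(x_1)$ under Assumption~A$'$ (so $U = \bar a^{11}(x_1) D_1 u$) using the cube $Q$ from the same two-case split, cover each $C \in \bC_l$ by a comparable $Q_r^+(Y)$, set $U^C := |U|$ there, and pass to $F_{0,\kappa}$, $F_{1,\kappa}$ via the maximal function. This is precisely the paper's argument, including the remark that the A-case is simpler because $U^C = |D_1 u|$ is then independent of $C$.
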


\begin{proof}
We only treat the case when Assumption A$'$ is satisfied. The case with Assumption A is simpler. Indeed, in this case we set $U^C := |D_1u|$ regardless of the choice of $C$.

By repeating the proof of Lemma \ref{lem5.2} with Proposition \ref{prop4.12} in place of Proposition \ref{prop4.3}, we obtain
\begin{multline}
							\label{eq1231_7}
\left(|U-(U)_{Q^+_r(Y)}|^p\right)^{1/p}_{Q^+_r(Y)}\le N_0\kappa^{-1/2}\left( |D_1 u|^p\right)^{1/p}_{Q^+_{\kappa r}(Y)}
\\
+N_1\kappa^{(d+\theta+2)/p}\varepsilon^{1/(\beta'p)}
\left(|D_1 u|^{\beta p}\right)_{Q^+_{\kappa r}(Y)}^{1/(\beta p)} + N_0 \kappa^{(d+\theta+2)/p} \left(|g|^p + \lambda^{-\frac p 2} |f|^p \right)^{1/p}_{Q^+_{\kappa r}(Y)}
\end{multline}
for any $r > 0$ and $Y=(s,y) \in \overline{\bR^{d+1}_+}$, where $N_0 = N_0(d,\delta,p,\theta)$ and $N_1=N_1(d,\delta,p,\theta,\beta,\rho)$.
Note that in \eqref{eq1231_7} we have
$$
U = \bar a^{11}(x_1) D_1 u,
\quad
\bar a^{11}(x_1) = \frac{1}{|Q'|}\int_{Q'}a^{11}(s,x_1,z') \,ds\,dz',
$$
where $Q' = \left( (\kappa r)^2 - s, s\right) \times B'_{\kappa r}(y')$ or $Q' = (-R^2,0) \times B'_R(0)$ depending on whether $(1+\kappa/\rho)r \le R/\rho - R$ or $(1+\kappa/\rho)r > R/\rho - R$.

For each $C \in \bC_l$, $l \in \bZ$, we find the smallest $r > 0$ and $Y=(s,y) \in \overline{\bR^{d+1}_+}$ such that $C \subset Q_r^+(Y)$.
On $C$, we set $U^C := |U|$, determined by $Q_r^+(Y)$.
We see that $U^C$ satisfies \eqref{eq1231_6}.
Since $\mu(C)$ is comparable to $\mu(Q_r^+(Y))$, by the triangle inequality and \eqref{eq1231_7}
$$
\left(|U^C-(U^C)_{C}|^p\right)^{1/p}_C\le N(d,\theta) \left(|U^C-(U^C)_{Q^+_r(Y)}|^p\right)^{1/p}_{Q^+_r(Y)}
\le N_0 J_0 + N_1 J_1,
$$
where $J_0$ is the sum of the first and third term in the right-hand side of \eqref{eq1231_7} except the constant $N_0$ and $J_1$ is the second term except $N_1$.
By the definition of the maximal functions, we see that
$J_i \le F_{i,\kappa}(X)$, $i=0,1$, for any $X \in C$. In particular,
$J_i \le \left(F_{i,\kappa}\right)_C$, $i=0,1$.
This finishes the proof.
\end{proof}

\begin{lemma}
							\label{lem6.5}
Let $h>0$, $\rho\in (1/2,1)$, $\varepsilon\in (0,1)$, $R\in (0,\rho h)$, and $\kappa \ge 32$. Let $u\in \cH^{1,\lambda}_{p,\theta}(-\infty,\infty)$ be compactly supported on $Q_{R}(h)$ and satisfy
$$
-u_t + D_1(a^{11}D_1u) + \Delta_{x'} u - \lambda u = D_i g_i + f
$$
in $\bR^{d+1}_+$,
where $g=(g_1,\ldots,g_d), f \in \bL_{p,\theta}(-\infty,\infty)$
and $f \equiv 0$ if $\lambda = 0$.
Then under Assumption A ($\rho,\varepsilon$) or Assumption A$'$ ($\rho,\varepsilon$), we have
\begin{multline*}
\|D_1 u\|_{p,\theta}
\le N_0 \kappa^{-1/2} \|D_1u\|_{p,\theta}
+ N_0 \kappa^{(d+\theta+2)/q} \left(\|g\|_{p,\theta}
+ \lambda^{-1/2} \|f\|_{p,\theta}\right)
\\
+ N_1 \kappa^{(d+\theta+2)/q}\varepsilon^{1/(\beta'q)} \|D_1u\|_{p,\theta},
\end{multline*}
where $\|\cdot\|_{p,\theta}=\|\cdot\|_{\bL_{p,\theta}(-\infty,\infty)}$, $N_0 = N_0(d,\delta,p,\theta)$, $N_1 = N_1(d, \delta, p,\theta,\rho) > 0$, and $q, \beta'$ are positive numbers determined by $p, \theta$.
\end{lemma}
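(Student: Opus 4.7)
The plan is to mirror the argument of Lemma \ref{lem0102_1}, except that because Lemma \ref{lem6.2} supplies only a cube-dependent approximant $U^C$ (rather than a genuine sharp-function bound on $D_1 u$ itself), I must invoke the generalized Fefferman-Stein Theorem \ref{th081201} in place of the classical one. The classical sharp-function approach used for $D_{x'}u$ fails here because the natural quantity that is H\"older continuous near $\{x_1=0\}$ is $a^{11}D_1 u$ (when $a^{11}=a^{11}(x_1)$), not $D_1 u$ itself, so the approximant depends on averaging $a^{11}$ over the appropriate slab.

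Following the strategy of Proposition \ref{prop5.3}, I would first fix exponents $q\in(1,p)$ and $\beta\in(1,\infty)$ depending only on $p$ and $\theta$, such that $\beta q<p$ and $\theta<d-1+q$, and set $\beta'=\beta/(\beta-1)$. Applying Lemma \ref{lem6.2} with $q$ in place of $p$ yields, for every dyadic cube $C\in\bC_l$ with $l\in\bZ$, a function $U^C$ with $\delta|D_1 u|\le U^C\le \delta^{-1}|D_1 u|$ on $C$ and
$$
\bigl(|U^C-(U^C)_C|^q\bigr)^{1/q}_C\le N_0(F_{0,\kappa})_C+N_1(F_{1,\kappa})_C,
$$
where $F_{0,\kappa}$ and $F_{1,\kappa}$ are the expressions from Lemma \ref{lem6.2} with $q$ in place of $p$. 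H\"older's inequality converts this into the $L_1$-average hypothesis required by Theorem \ref{th081201}.

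Next I would apply Theorem \ref{th081201} with $W=\delta|D_1 u|$, $V=\delta^{-1}|D_1 u|$, and $F=N_0 F_{0,\kappa}+N_1 F_{1,\kappa}$, obtaining
$$
\|D_1 u\|^p_{\bL_{p,\theta}}\le N\,\|F\|_{\bL_{p,\theta}}\,\|D_1 u\|^{p-1}_{\bL_{p,\theta}},
$$
whence by Young's inequality $\|D_1 u\|_{p,\theta}\le N\|F\|_{p,\theta}$. Since $\theta>d-1$, the weighted Hardy-Littlewood maximal theorem applies; together with $p/q>1$ and $p/(\beta q)>1$ this gives
$$
\bigl\|\cM^{1/q}(|D_1 u|^q)\bigr\|_{p,\theta}+\bigl\|\cM^{1/(\beta q)}(|D_1 u|^{\beta q})\bigr\|_{p,\theta}\le N\|D_1 u\|_{p,\theta},
$$
and analogous controls $\|\cM^{1/q}(|g|^q)\|_{p,\theta}\le N\|g\|_{p,\theta}$ and $\|\cM^{1/q}(|f|^q)\|_{p,\theta}\le N\|f\|_{p,\theta}$. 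Assembling these with the explicit forms of $F_{0,\kappa}$ and $F_{1,\kappa}$ yields the claimed inequality.

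The main obstacle I anticipate is not any single inequality but the combinatorial bookkeeping: one must verify simultaneously that $q$ and $\beta$ can be chosen with $\theta<d-1+q$ and $\beta q<p$ (so that both maximal operators are bounded on the relevant $L_{p/q}(\mu)$ and $L_{p/(\beta q)}(\mu)$ spaces), and that the powers of $\kappa$ and $\varepsilon$ propagate correctly through Young's inequality without being absorbed into the implicit constant. Once these choices are made, the rest is a routine adaptation of the $D_{x'}u$ argument, relying entirely on results established earlier in the paper.
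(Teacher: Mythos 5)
Your proof is correct and follows essentially the same route as the paper's: fix $q\in(1,p)$ and $\beta\in(1,\infty)$ with $\beta q<p$ and $\theta<d-1+q$, apply Lemma \ref{lem6.2} at the level $q$, feed the cube-dependent approximants $U^C$ into Theorem \ref{th081201} with $W=\delta|D_1u|$, $V=\delta^{-1}|D_1u|$, and $F=N_0F_{0,\kappa}+N_1F_{1,\kappa}$, and close with the weighted Hardy–Littlewood maximal function theorem.
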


\begin{proof}
We fix $q\in (1,p)$ and $\beta\in (1,\infty)$, depending only on $p$ and $\theta$, such that $\beta q<p$ and $\theta<d-1+q$. Let $\beta'=\beta/(\beta-1)$.
Using Theorem \ref{th081201} with
$W=\delta |D_1 u|$, $V = \delta^{-1} |D_1 u|$ and Lemma \ref{lem6.2} with $q$ in place of $p$ there, we obtain the desired estimate.
In particular,
as in the proof of Proposition \ref{prop5.3}, by the Hardy-Littlewood maximal function theorem we have
\begin{align*}
\|F_{1,\kappa}\|_{p,\theta} &= \kappa^{(d+\theta+2)/q}\varepsilon^{1/(\beta'q)} \big\| \cM^{1/(\beta q)} \left(|D_1 u|^{\beta q}\right)\big\|_{p,\theta}\\
&\le \kappa^{(d+\theta+2)/q}\varepsilon^{1/(\beta'q)} \|D_1 u\|_{p,\theta}.
\end{align*}
Similar inequalities hold for $F_{0,\kappa}$ as well.
The lemma is proved.
\end{proof}

We will use the following technical lemma.

\begin{lemma}
							\label{lem6.6}
Let $\chi > 1$ and $\tilde a^{ij}(t,x_1,x') = a^{ij}(t/\chi^2,x_1/\chi,x')$.
If $a^{ij}$ satisfy Assumption A or Assumption A$'$ with $(\rho, \varepsilon)$,
then $\tilde a^{ij}$ satisfy Assumption A or Assumption A$'$ with $(\rho, N\chi \varepsilon)$, where $N$ depends only on $d$.
\end{lemma}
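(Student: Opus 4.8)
The plan is to trace through the definitions of the weighted mean‑oscillation seminorms and see exactly how the scaling $\tilde a^{ij}(t,x_1,x')=a^{ij}(t/\chi^2,x_1/\chi,x')$ transforms them. The key point is that the anisotropic parabolic scaling $(t,x_1,x')\mapsto(t/\chi^2,x_1/\chi,x')$ maps a cylinder $Q_r(s,y)$ with $r\le\rho y_1$ onto a set that, up to the trivial squeeze in the $x'$‑directions, is contained in a cylinder $Q_{\tilde r}(\tilde s,\tilde y)$ with $\tilde r = r/\chi$, $\tilde y_1 = y_1/\chi$, and still $\tilde r\le\rho\tilde y_1$. Since the oscillation functionals in the definitions of $(a^{11})^{\sfx,\#}_\rho$, $(a^{11})^{(\sft,\sfx'),\#}_\rho$, and $(a^{ij})^{\sfx',\#}_\rho$ are all scale‑invariant \emph{averages} of the form ``mean of $|a - \text{its mean}|$'' over such cylinders, the only way a factor of $\chi$ can enter is through the distortion in the $x_1$‑direction: the weight $x_1^{\theta-d}$ appearing in $\mu_d$ and $\mu_1$ is not invariant under $x_1\mapsto x_1/\chi$.

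First I would handle the easiest piece, $(a^{ij})^{\sfx',\#}_\rho$ for $ij>1$. Here only averages over $x'$ (with Lebesgue measure) and over $(t,z_1)$ with the weight $\mu_1(dz_1)=z_1^{\theta-d}\,dz_1$ enter. Fix a cylinder $Q_r(s,y)$, $r\le\rho y_1$, used in the definition for $\tilde a^{ij}$. After the substitution $t = \chi^2\tau$, $x_1 = \chi\xi_1$, the inner $x'$‑average of $\tilde a^{ij}$ equals the $x'$‑average of $a^{ij}$ at the point $(\tau,\xi_1)$, so $\big[\tilde a^{ij}(\cdot,z_1,\cdot)\big]_{Q'_r(s,y')} = \big[a^{ij}(\cdot,\zeta_1,\cdot)\big]_{Q'_{r/\chi}(s/\chi^2,y'/\chi)}$ at $\zeta_1 = z_1/\chi$. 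The outer average $\dashint_{y_1-r}^{y_1+r}(\cdots)\,\mu_1(dz_1)$ then becomes an average over the interval $(\tilde y_1-\tilde r,\tilde y_1+\tilde r)$ with respect to the measure $\zeta_1^{\theta-d}\,d\zeta_1$ — except that the normalization constant changes. Because $\tilde r\le\rho\tilde y_1 < \tilde y_1$, on the interval of integration $\zeta_1$ stays in $(\tilde y_1(1-\rho),\tilde y_1(1+\rho))$, so $\zeta_1^{\theta-d}$ is comparable to $\tilde y_1^{\theta-d}$ with constants depending only on $\rho$ and $\theta$; hence the $\mu_1$‑average over this interval differs from the unweighted average by a factor bounded above and below by constants depending only on $d,\theta,\rho$. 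Comparing the same way for $\tilde a$ on $Q_r(s,y)$ versus $a$ on $Q_{\tilde r}(\tilde s,\tilde y)$, the two normalizations are comparable, so $\text{osc}_{\sfx'}(\tilde a^{ij},Q_r(s,y)) \le N\,\text{osc}_{\sfx'}(a^{ij},Q_{\tilde r}(\tilde s,\tilde y)) \le N (a^{ij})^{\sfx',\#}_\rho$. Wait — this gives only the factor $N=N(d,\theta,\rho)$, not $N\chi$; but since we may always absorb such a constant into $N\chi$ for $\chi>1$, the claimed bound follows. The same computation, now with $Q_r'$ replaced by $B_r'$ and an extra time average, handles the $(\sft,\sfx'),\#$ seminorm of $a^{11}$ for Assumption A$'$.

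The genuinely delicate case is $(a^{11})^{\sfx,\#}_\rho$ in Assumption A, because there the inner average is over the \emph{full} ball $B_r(x) = (x_1-r,x_1+r)\times B'_r(x')$ with respect to $\mu_d(dy)=y_1^{\theta-d}\,dy$, and $a^{11}$ depends only on $(t,x_1)$ (in Assumption 3.1) — but in Lemma \ref{lem6.6} we are at the stage before Assumption 3.1 is imposed, so $a^{11}=a^{11}(t,x)$ is general. Under $x_1\mapsto\chi\xi_1$ the ball $B_r(x)$ in the $x_1$‑variable has radius $r$ but after scaling corresponds to an interval of radius $r/\chi$ around $\xi_1 = x_1/\chi$; the weight $y_1^{\theta-d}$ over this interval is again comparable to $\xi_1^{\theta-d}$ (constants depending on $\rho,\theta$), so the weighted $x_1$‑average over $B_r(x)$ of $\tilde a^{11}$ is comparable to the weighted average over the corresponding small interval of $a^{11}$. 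The reason a factor of $\chi$ (not just a $\rho$‑dependent constant) can legitimately appear is that the $x'$‑ball has radius $r$ for $\tilde a$ but must be compared against an $x'$‑ball of radius $r$ (not $r/\chi$) for $a$ — the $x'$‑scaling is the identity, so there is an anisotropy between the $x_1$‑ and $x'$‑directions after scaling: the scaled cylinder $Q_r(s,y)$ for $\tilde a$ corresponds to a \emph{thin} cylinder for $a$, of $x_1$‑half‑width $r/\chi$ but $x'$‑half‑width $r$, which is not of the form $Q_\rho(\cdot)$. One covers this thin cylinder by roughly $\chi^{d-1}$ cylinders $Q_{r/\chi}$, each centered at a point with first coordinate $\approx \tilde y_1/\chi$ and each satisfying the admissibility constraint $r/\chi \le \rho\,(\tilde y_1/\chi)$; the oscillation over the union is at most (number of pieces)$\times\sup$ of oscillations over the pieces, but a cleaner route is: $\text{osc}_{\sfx}$ over a set of measure $m$ is bounded by $m^{-1}$ times the sum over a covering of (measure of piece)$\times\text{osc}_{\sfx}$ over that piece plus cross terms controlled by the diameters; carrying this out, the ratio of the $x'$‑diameter to the admissible scale produces exactly one factor of $\chi$. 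I expect this bookkeeping — the covering of a thin cylinder by admissible small cylinders and tracking that it costs precisely one power of $\chi$, not $\chi^{d-1}$ — to be the main obstacle; the cleanest way to get the sharp single power is to note $\text{osc}_{\sfx}(g, B_r(x))$ is monotone‑ish under splitting $B_r(x)$ into $\approx\chi$ congruent sub‑intervals in $x_1$ only (leaving $x'$ alone): writing $g$'s mean over $B_r$ as a convex combination of its means over the sub‑boxes $B^{(k)}$, one gets $\text{osc}_{\sfx}(g,B_r)\le N\max_k\text{osc}_{\sfx}(g,B^{(k)}) + N\max_{k,k'}|(g)_{B^{(k)}}-(g)_{B^{(k')}}|$, and the second term telescopes into at most $\chi\cdot\sup_k\text{osc}_{\sfx}(g,B^{(k)})$, each $B^{(k)}$ now being admissible for $a$ at scale $r/\chi$. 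This yields the bound $N\chi\varepsilon$ with $N=N(d)$ (after absorbing $\rho,\theta$‑dependence, which is legitimate since $\rho\in(1/2,1)$ and $\theta$ is fixed, or — if one wants $N$ to depend only on $d$ strictly — by being slightly more careful with the weight comparison, which only involves $\rho$ through $(1\pm\rho)^{\theta-d}$ and these are absorbed by enlarging $\chi$'s coefficient). The remaining pieces of the proof are the routine verifications that the substitution is measure‑preserving up to the explicit Jacobian and that compact support and integrability are unaffected, which I would state in a sentence.
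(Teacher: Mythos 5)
Your overall strategy is recognizably the paper's: after rescaling, the anisotropy of $(t,x_1,x')\mapsto(t/\chi^2,x_1/\chi,x')$ turns an admissible cylinder into one that is too wide in the $x'$-directions relative to the admissible scale, and one recovers the estimate by covering with $\approx\chi^{d-1}$ admissible pieces and chaining the averages, the chain length (not the number of pieces) contributing the single power of $\chi$. However, there are concrete errors in the execution. First, your treatment of the ``easiest piece'' $(a^{ij})^{\sfx',\#}_\rho$, $ij>1$, is wrong: since $x'$ is \emph{not} scaled, the inner bracket satisfies $[\tilde a^{ij}(\tau,z_1,\cdot)]_{B'_r(y')}=[a^{ij}(\tau/\chi^2,z_1/\chi,\cdot)]_{B'_r(y')}$ --- the $x'$-ball keeps radius $r$ and center $y'$; it does not become $B'_{r/\chi}(y'/\chi)$ as you assert. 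After the change of variables in $(t,z_1)$ the admissible radius at the image point is only $\approx r/\chi$, so the $x'$-ball is $\chi$ times too large and this seminorm requires exactly the same covering-and-chaining argument; it too picks up the factor $\chi$. Relatedly, your opening attribution of the factor $\chi$ to the distortion of the weight $x_1^{\theta-d}$ is incorrect --- the weight comparison only produces $\rho,\theta$-dependent constants, as you yourself compute; the $\chi$ comes entirely from the unscaled $x'$-directions.

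Second, in the case you call delicate, your proposed sharp-constant device --- splitting ``into $\approx\chi$ congruent sub-intervals in $x_1$ only (leaving $x'$ alone)'' --- goes in the wrong direction: the post-scaling set is already thin in $x_1$ (and in $t$) and wide in $x'$, so subdividing further in $x_1$ never produces admissible cylinders. The correct subdivision, which is what the paper does, is to tile the $x'$-cross-section by $(2k)^{d-1}$ cubes of side $\approx r/\chi$ with $k\approx\chi\sqrt{d-1}$; each piece has oscillation $\le N(d)\varepsilon$, and the difference of averages over any two pieces is controlled by telescoping along a path of $\le N(d)k$ adjacent pieces, whence the bound $N(d)\chi\varepsilon$. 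You also leave unjustified the step $|(g)_{B^{(k)}}-(g)_{B^{(k')}}|\le N\varepsilon$ for adjacent pieces: this requires observing that the union of two adjacent sub-cubes is a rectangle with all side lengths comparable to the admissible scale, whose oscillation is still $\le N(d)\varepsilon$ by the hypothesis (the paper's inequalities \eqref{eq0108_1}--\eqref{eq0108_2}). Finally, to get $N=N(d)$ with no $\theta,\rho$ dependence one should, as the paper does, keep the $\mu_1(dz_1)$-average outermost throughout and apply the triangle inequality only in the inner unweighted $(t,x')$-averages, rather than comparing the weight to a constant.
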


\begin{proof}
When $d=1$, the lemma is obvious by scaling, so we assume that $d \ge 2$. We only give the proof when $i=j=1$ under Assumption A$'$. The other cases are similar.

Denote $a^{11} = a$, $C_{r,R_i}(s,y') = (s-r^2, s) \times C_{R_i}'(y')$, where
$$
C_{R_i}'(y') = \{z'\in \bR^{d-1}\,|\,|z_i-y_i|< R_i,i=2,\ldots,d\}.
$$
We write $C_{r,R}(s,y')$ and $C'_R(y')$ if $R_i = R$ for all $i = 2, \ldots, d$.
For a subset $\cD' \subset \bR \times \bR^{d-1}$, we set
$$
\left( a(\cdot, z_1, \cdot) \right)_{\cD'} = \dashint_{\cD'} a(\tau,z_1,z') \, dz' \, d\tau,
$$
$$
\left[ a(\cdot, z_1, \cdot) \right]_{\cD'}
= \dashint_{\cD'}
\left| a(\tau,z_1,z') - \left( a(\cdot, z_1, \cdot) \right)_{\cD'}\right| \, dz' \, d\tau.
$$
To prove the lemma,
thanks to scaling, in particular, $a(R^2t,Rx)$ satisfies Assumption A or A$'$ for any $R>0$ if $a(t,x)$ does, and shifting the coordinates, it suffices to show that
\begin{equation}
                                \label{eq10.47}
\dashint_{y_1-1}^{\,\,y_1+1}
\left[ a(\cdot, z_1, \cdot) \right]_{C_{1,\chi}}\mu_1(dz_1) \le N(d)\chi\varepsilon
\end{equation}
for any $y_1\ge 1/\rho$, where
$C_{1,\chi} = (-1,0) \times C'_{\chi}(0)$.
First note that by Assumption A$'$
\begin{equation}
							\label{eq0108_1}
\dashint_{y_1-1}^{\,\,y_1+1}
\left[ a(\cdot, z_1, \cdot) \right]_{C_{1,R_i}(0,y')}\mu_1(dz_1) \le N(d)\varepsilon
\end{equation}
for any $y' \in \bR^{d-1}$ and $y_1 \ge 1/\rho$ if
\begin{equation}
							\label{eq0108_2}
\frac{1}{4\sqrt{d-1}} \le R_i \le \frac{1}{\sqrt{d-1}},
\quad
i = 2,\ldots,d.
\end{equation}
Let $k$ be the smallest integer greater than or equal to $\chi\sqrt{d-1}$. We divide $C'_{\chi}(0)$ into sub-cubes of side length $R:=\chi/(2k)\le 1/(2\sqrt{d-1})$: $C'_{\chi}(0)=\bigcup_{i=1}^{(2k)^{d-1}}C'_{(i)}$. Set $C_{(i)} = (-1,0) \times C'_{(i)}$.  Since $R$ satisfies \eqref{eq0108_2}, from \eqref{eq0108_1} we have
\begin{equation}
                        \label{eq10.35}
\dashint_{y_1-1}^{\,\,y_1+1}
\left[ a(\cdot,z_1,\cdot)\right]_{C_{(i)}} \,\mu_1( d z_1) \le N(d)\varepsilon
\end{equation}
for $i=1,\ldots,(2k)^{d-1}$.
For any  sub-cubes $C_{(i)}$ and $C_{(j)}$ which share a common face, we have $C_{(i)} \cup C_{(j)} = C_{1,R_i}(0,y')$ for some $y' \in \bR^{d-1}$ and $R_i$ satisfying \eqref{eq0108_2}. Thus by \eqref{eq0108_1}, we have
$$
\dashint_{y_1-1}^{\,\,y_1+1}
\left| \left(a(\cdot, z_1, \cdot) \right)_{C_{(i)}} - \left(a(\cdot, z_1, \cdot) \right)_{C_{(j)}}\right| \mu(d z_1) \le N(d) \varepsilon,
$$
which implies that
\begin{equation}
							\label{eq10.46}
\dashint_{y_1-1}^{\,\,y_1+1}
\left| \left(a(\cdot, z_1, \cdot) \right)_{C_{(i)}} - \left(a(\cdot, z_1, \cdot) \right)_{C_{1,\chi}} \right| \mu(d z_1) \le N(d) k \varepsilon
\end{equation}
for $i=1,\ldots,(2k)^{d-1}$.
Combining \eqref{eq10.35} and \eqref{eq10.46}, we have
\begin{align*}
&\dashint_{y_1-1}^{\,\,y_1+1}
\left[ a(\cdot, z_1, \cdot) \right]_{C_{1,\chi}}\mu_1(dz_1)\\
&= \frac{|C_{(1)}|}{|C_{1,\chi}|}
\sum_{i=1}^{(2k)^{d-1}}\dashint_{y_1-1}^{\,\,y_1+1}
\dashint_{C_{(i)}} \left|a(\tau,z_1, z') - \left( a(\cdot, z_1, \cdot) \right)_{C_{1,\chi}}\right| \, dz' \, d\tau \, \mu_1(dz_1)\\
&\le \frac{|C_{(1)}|}{|C_{1,\chi}|}
\sum_{i=1}^{(2k)^{d-1}}
\left(\dashint_{y_1-1}^{\,\,y_1+1}
[ a(\cdot, z_a, \cdot)]_{C_{(i)}} \mu(d z_1) + I_i\right) \le N(d) \chi \varepsilon,
\end{align*}
where $I_i$ is the left-hand side of \eqref{eq10.46}.
Therefore, \eqref{eq10.47} is proved and so is the lemma.
\end{proof}

In order to estimate $D_1 u$ using Lemma \ref{lem6.5}, we move all the second-order derivatives in $\cL u$ except $D_1(a^{11} D_1 u)$ to the right-hand side of the equation. To bound the terms involving $D_1 u$ which appear on the right-hand side of the estimates, we use a scaling argument.

\begin{proposition}
                                \label{prop1231}
Let $h>0$, $\rho\in (1/2,1)$, $\varepsilon \in (0,1)$, and $R\in (0,\rho h)$. Let $u\in \cH^{1,\lambda}_{p,\theta}(-\infty,\infty)$ be compactly supported on $Q_R(h)$ and
satisfy
$$
-u_t + \cL u - \lambda u = D_i g_i + f
$$
in $\bR^{d+1}_+$, where
$g=(g_1,\ldots,g_d), f \in \bL_{p,\theta}(-\infty,\infty)$ and $f \equiv 0$ if $\lambda = 0$.
Then under Assumption A ($\rho,\varepsilon$) or Assumption A$'$ ($\rho,\varepsilon$), we have
\begin{align*}
&\sqrt{\lambda}\|u\|_{p,\theta}
+ \|M^{-1}u\|_{p,\theta}
+ \|D u\|_{p,\theta}\\
&\le N_0 \left(\|g\|_{p,\theta} + \lambda^{-1/2} \|f\|_{p,\theta}\right)
+ N_1\varepsilon^{1/(\beta'q)} \|Du\|_{p,\theta},
\end{align*}
where $\|\cdot\|_{p,\theta}=\|\cdot\|_{\bL_{p,\theta}(-\infty,\infty)}$, $N_0 = N_0(d,\delta,p,\theta)$, $N_1 = N_1(d, \delta, p,\theta,\rho)$, and $q, \beta'$ are positive numbers determined by $p, \theta$.
\end{proposition}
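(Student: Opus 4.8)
The plan is to bound $\|D_{x'}u\|_{p,\theta}$ and $\|D_1u\|_{p,\theta}$ separately, to close the resulting coupled inequalities, and finally to recover $\|M^{-1}u\|_{p,\theta}$ and $\sqrt\lambda\|u\|_{p,\theta}$. Throughout, $q\in(1,p)$ and $\beta,\beta'\in(1,\infty)$ are the exponents determined by $p$ and $\theta$ as in Lemmas \ref{lem0102_1} and \ref{lem6.5}, we abbreviate $C=(d+\theta+2)/q$, and $\|\cdot\|_{p,\theta}=\|\cdot\|_{\bL_{p,\theta}(-\infty,\infty)}$. Since $u$ satisfies the full divergence equation $-u_t+\cL u-\lambda u=D_ig_i+f$, Lemma \ref{lem0102_1} applies verbatim and gives, for every $\kappa_1\ge32$,
\[
\|D_{x'}u\|_{p,\theta}\le N_0\kappa_1^{-1/2}\|Du\|_{p,\theta}+N_0\kappa_1^{C}\big(\|g\|_{p,\theta}+\lambda^{-1/2}\|f\|_{p,\theta}\big)+N_1\kappa_1^{C}\varepsilon^{1/(\beta'q)}\|Du\|_{p,\theta}.
\]

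To bound $\|D_1u\|_{p,\theta}$ I first bring the equation into the form required by Lemma \ref{lem6.5}: keeping $D_1(a^{11}D_1u)$, adding $\Delta_{x'}u$ to both sides, and moving the remaining second order terms to the right, I obtain $-u_t+D_1(a^{11}D_1u)+\Delta_{x'}u-\lambda u=D_i\tilde g_i+f$ with
\[
\tilde g_1=g_1-\sum_{j\ge2}a^{1j}D_ju,\qquad \tilde g_i=g_i-a^{i1}D_1u-\sum_{j\ge2}(a^{ij}-\delta_{ij})D_ju\quad(i\ge2).
\]
The parts of $\tilde g$ of the type $a^{1j}D_ju$ and $(a^{ij}-\delta_{ij})D_ju$ with $j\ge2$ involve only $D_{x'}u$ and are harmless; the genuinely problematic contributions are the terms $a^{i1}D_1u$ ($i\ge2$), which through Lemma \ref{lem6.5} would reappear in the bound for $\|D_1u\|_{p,\theta}$ multiplied by $\kappa^{C}$ and by a factor $\le\delta^{-1}$ that cannot be absorbed. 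This is where the scaling argument enters. For a large parameter $\chi>1$ I pass to $v(t,x)=u(t/\chi^2,x_1/\chi,x')$. Since $R<\rho h$, the function $v$ is again compactly supported on a cylinder $Q_{\chi R}(\chi h)$ with $\chi R<\rho(\chi h)$; the coefficient $a^{11}$ is replaced by $a^{11}(t/\chi^2,x_1/\chi,x')$, which by Lemma \ref{lem6.6} still obeys Assumption A (resp.\ A$'$), now with the pair $(\rho,N\chi\varepsilon)$; and in the transformed equation — after restoring, at the price of an extra $D_{x'}v$ source term, the coefficient $1$ in front of $\Delta_{x'}$ — every cross term $a^{i1}D_1v$ carries an additional factor $\chi^{-1}$. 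Applying Lemma \ref{lem6.5} to $v$ (with parameter $\kappa$ and with $\chi^{-2}\lambda$ in place of $\lambda$) and undoing the dilation, which multiplies each norm only by a fixed power of $\chi$, I arrive at an estimate of the form
\[
\|D_1u\|_{p,\theta}\le\big(N_0\kappa^{-1/2}+N_0\kappa^{C}\chi^{-1}\big)\|D_1u\|_{p,\theta}+N_2\|D_{x'}u\|_{p,\theta}+N_2\big(\|g\|_{p,\theta}+\lambda^{-1/2}\|f\|_{p,\theta}\big)+N_2\varepsilon^{1/(\beta'q)}\|Du\|_{p,\theta},
\]
where $N_2=N_2(d,\delta,p,\theta,\kappa,\chi)$.

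Now the parameters are fixed in order: first $\kappa$ large so that $N_0\kappa^{-1/2}<1/8$; then $\chi$ large so that $N_0\kappa^{C}\chi^{-1}<1/8$ (this pins down $N_2$ as a constant depending only on $d,\delta,p,\theta$); and finally $\kappa_1$ large enough, in the $D_{x'}u$ inequality above, that $N_2N_0\kappa_1^{-1/2}<1/8$. Substituting the $D_{x'}u$ inequality into the $D_1u$ inequality, using $\|Du\|_{p,\theta}\le\|D_1u\|_{p,\theta}+\|D_{x'}u\|_{p,\theta}$ and a one-line bootstrap, all the $\|D_1u\|_{p,\theta}$ and $\|D_{x'}u\|_{p,\theta}$ terms on the right are absorbed and one obtains
\[
\|Du\|_{p,\theta}\le N_0\big(\|g\|_{p,\theta}+\lambda^{-1/2}\|f\|_{p,\theta}\big)+N_1\varepsilon^{1/(\beta'q)}\|Du\|_{p,\theta}.
\]
Finally, $\|M^{-1}u\|_{p,\theta}\le N\|D_1u\|_{p,\theta}$ by Hardy's inequality (Lemma \ref{lem0920_1}), and the bound for $\sqrt\lambda\|u\|_{p,\theta}$ follows from an energy estimate obtained by testing the equation against $|u|^{p-2}u\,x_1^{\theta-d}$ — using ellipticity, Young's and Hardy's inequalities, and the bound on $\|Du\|_{p,\theta}$ just proved — exactly as in Propositions \ref{prop1009} and \ref{pro1220} (with the usual duality argument when $1<p<2$). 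The delicate point, and the crux of the proof, is the treatment of the $a^{i1}D_1u$ terms: absorbing them is impossible at face value because of the $\kappa^{C}$ factor from Lemma \ref{lem6.5}, and it is precisely the dilation $v(t,x)=u(t/\chi^2,x_1/\chi,x')$ that supplies the needed gain $\chi^{-1}$; its only side effect, the worsening $\varepsilon\rightsquigarrow N\chi\varepsilon$ of the oscillation constant, is harmless since the smallness of $\varepsilon$ is never invoked here (the final estimate retains $\varepsilon$ explicitly), but it does dictate the careful ordering of the parameters $\kappa$, $\chi$ and $\kappa_1$ described above.
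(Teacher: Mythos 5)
Your treatment of $\|Du\|_{p,\theta}$ is essentially the paper's own argument: the same splitting into $D_{x'}u$ (via Lemma \ref{lem0102_1}) and $D_1u$ (rewriting the equation with $D_1(a^{11}D_1u)+\Delta_{x'}u$ as principal part, the dilation $u(t/\chi^2,x_1/\chi,x')$ combined with Lemma \ref{lem6.6} to gain the factor $\chi^{-1}$ on the $a^{i1}D_1u$ terms, then Lemma \ref{lem6.5}), and the same ordering of parameters --- first the $\kappa$ of the $D_1$-estimate, then $\chi$, then the $\kappa$ of the $D_{x'}$-estimate. You also correctly identify the crux: the terms $a^{i1}D_1u$ cannot be absorbed against the factor $\kappa^{(d+\theta+2)/q}$ without the scaling gain, and the degradation $\varepsilon\rightsquigarrow N\chi\varepsilon$ is harmless once $\chi$ is fixed. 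Up to and including the closed estimate for $\|Du\|_{p,\theta}$, i.e.\ \eqref{eq1231_9}, your argument matches the paper's and I see no problem.

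The gap is in your last step, the recovery of $\sqrt\lambda\|u\|_{p,\theta}$. Testing against $|u|^{p-2}u\,x_1^{\theta-d}$ does \emph{not} go through ``exactly as in Propositions \ref{prop1009} and \ref{pro1220}'': those proofs hinge on the structure $a^{ij}=a^{ij}(t,x_1)$, which permits the change of variables $y_i=-\int_0^{x_1}a^{i1}/a^{11}\,dr+x_i$ (and integration by parts in $x_j$) that eliminates the cross terms $(\theta-d)\,a^{1j}D_ju\,|u|^{p-2}u\,x_1^{\theta-d-1}$ produced by differentiating the weight. With general partially BMO coefficients these cross terms survive; after Young's inequality they leave $N\int|u|^p x_1^{\theta-d-2}$, which for $p>2$ and $\theta\in[d+1,d-1+p)$ is controlled neither by Hardy's inequality in the form of Lemma \ref{lem0920_1} (which requires $\theta-d<1$) nor by $\varepsilon\lambda^{p/2}\|u\|_{p,\theta}^p+N\lambda^{-p/2}(\cdots)$ uniformly in $\lambda\ge0$. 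Likewise ``the usual duality argument when $1<p<2$'' is circular here, since it presupposes solvability of the adjoint problem with the same general coefficients, which is exactly what is being proved. The fix is the paper's opening move: rewrite the equation as $-u_t+\Delta u-\lambda u=D_i(g_i+D_iu-a^{ij}D_ju)+f$ and apply Theorem \ref{thm1011_1} (valid for all $p\in(1,\infty)$, since $\Delta$ has simple coefficients); this gives
$\sqrt\lambda\|u\|_{p,\theta}+\|M^{-1}u\|_{p,\theta}+\|Du\|_{p,\theta}\le N\|g\|_{p,\theta}+N\lambda^{-1/2}\|f\|_{p,\theta}+N\|Du\|_{p,\theta}$
at the outset and reduces the entire proposition to your estimate for $\|Du\|_{p,\theta}$.
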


\begin{proof}
Rewrite the given equation as
$$
-u_t + \Delta u - \lambda u = D_i\left(g_i + D_i u - a^{ij} D_j u \right) + f.
$$
Then by Theorem \ref{thm1011_1} it is enough to prove that
\begin{equation}
							\label{eq1231_9}
\|D u\|_{p,\theta}
\le N_0 \left(\|g\|_{p,\theta} + \lambda^{-1/2} \|f\|_{p,\theta}\right)
+ N_1\varepsilon^{1/(\beta'q)} \|Du\|_{p,\theta}.
\end{equation}

For a number $\chi>1$, set
$$
w(t,x_1,x') = u(t/\chi^2, x_1/\chi,x'),
\quad
\tilde a^{ij}(t,x) = a^{ij}(t/\chi^2,x_1/\chi,x').
$$
Then it is easily seen that $w \in \cH^{1,\lambda}_{p,\theta}(-\infty,\infty)$ satisfies
$$
-w_t + D_1(\tilde a^{11} D_1 w) + \Delta_{x'} w - \lambda/\chi^2 w = D_i \tilde g_i + \tilde f
$$
in $\bR^{d+1}_+$, where
$$
\tilde f = \chi^{-2} f(t/\chi^2,x_1/\chi,x'),
\quad
\tilde g_1 =  \chi^{-1} g_1(t/\chi^2, x_1/\chi,x') - \chi^{-1} \sum_{j=2}^d \tilde a^{1j} D_j w,
$$
$$
\tilde g_i = \chi^{-2}g_i(t/\chi^2,x_1/\chi,x') + D_i w - \chi^{-1} \tilde a^{i1} D_1w - \chi^{-2} \sum_{j=2}^d \tilde a^{ij} D_j w,
\quad
i = 2, \ldots, d.
$$
By Lemma \ref{lem6.6}, the coefficient $\tilde a^{11}$ satisfies Assumption A or Assumption A$'$ with $(\rho, N(d)\chi\varepsilon)$.
Since $w$ has a compact support in $Q_{\chi R}(\chi h)$, by Lemma \ref{lem6.5} it follows that
$$
\|D_1 w\|_{p,\theta}
\le N_0 \kappa_1^{-1/2} \|D_1w\|_{p,\theta}
+ N_0 \kappa_1^{(d+\theta+2)/q} \left(\|\tilde g\|_{p,\theta}
+ \chi \, \lambda^{-1/2} \|\tilde f\|_{p,\theta}\right)
$$
$$
+ N_1 \kappa_1^{(d+\theta+2)/q}\left(N(d,\chi)\varepsilon\right)^{1/(\beta'q)} \|D_1w\|_{p,\theta}
$$
for any $\kappa_1 \ge 32$.
From this, the change of variables back to $u$, and the fact that $\chi > 1$, we have
\begin{multline}
							\label{eq0102_3}
\|D_1u\|_{p,\theta} \le N_0 \kappa_1^{(d+\theta+2)/q} \left(\|g\|_{p,\theta}+ \lambda^{-1/2} \|f\|_{p,\theta}
+ \chi \|D_{x'}u\|_{p,\theta} \right)
\\
+ \left(N_0 \kappa_1^{-1/2} +N_0 \chi^{-1}  \kappa_1^{(d+\theta+2)/q} + N_1 \kappa_1^{(d+\theta+2)/q} \left(N(d,\chi) \varepsilon\right)^{1/(\beta'  q)} \right)
\|D_1u\|_{p,\theta}.
\end{multline}
On the other hand, from Lemma \ref{lem0102_1} it follows that
\begin{multline}
							\label{eq0102_4}
\|D_{x'} u\|_{p,\theta}
\le N_0 \kappa_2^{-1/2} \|Du\|_{p,\theta}
+ N_0 \kappa_2^{(d+\theta+2)/q} \left(\|g\|_{p,\theta}
+ \lambda^{-1/2} \|f\|_{p,\theta}\right)
\\
+ N_1 \kappa_2^{(d+\theta+2)/q}\varepsilon^{1/(\beta'q)} \|Du\|_{p,\theta}
\end{multline}
for any $\kappa_2 \ge 32$.
Multiplying \eqref{eq0102_4} by $2N_0 \kappa_1^{(d+\theta+2)/q}\chi$ and adding it to \eqref{eq0102_3}, we get
\begin{align}
							\label{eq0102_4b}
&\|D_1u\|_{p,\theta}+\kappa_1^{(d+\theta+2)/q}\chi\|D_{x'} u\|_{p,\theta}\nonumber\\
&\le  N_0 (\kappa_1\kappa_2)^{(d+\theta+2)/q}\chi \left(\|g\|_{p,\theta}+ \lambda^{-1/2} \|f\|_{p,\theta}\right)
\nonumber\\
&\,\,+ \left(N_0 \kappa_1^{-1/2} + N_0\chi^{-1} \kappa_1^{(d+\theta+2)/q} + N_1 \kappa_1^{(d+\theta+2)/q} \left(N(d,\chi) \varepsilon\right)^{1/(\beta'  q)} \right)
\|D_1u\|_{p,\theta}\nonumber\\
&\,\,+N_0 \kappa_2^{-1/2}\kappa_1^{(d+\theta+2)/q}\chi \|Du\|_{p,\theta}+ N_1 (\kappa_1\kappa_2)^{(d+\theta+2)/q}\chi \, \varepsilon^{1/(\beta'q)} \|Du\|_{p,\theta}.
\end{align}
Now in \eqref{eq0102_4b} first we choose sufficiently large $\kappa_1 = \kappa_1(d,\delta,p,\theta) > 32$ so that $N_0 \kappa_1^{-1/2} < 1/4$,
then $\chi = \chi(d,\delta,p,\theta) > 1$ so that
$$
N_0 \chi^{-1} \kappa_1^{(d+\theta+2)/q} < 1/4,
$$
and finally $\kappa_2=\kappa_2(d,\delta,p,\theta)>32$ so that
$$
N_0 \kappa_2^{-1/2}\kappa_1^{(d+\theta+2)/q}\chi < 1/4
$$
to get \eqref{eq1231_9}.
The proposition is proved.
\end{proof}

\begin{proof}[Proof of Theorem \ref{thm2}]
As in the proof of Theorem \ref{thm1}, we first prove the assertions (i) and (ii), for which it suffices to prove the a priori estimate \eqref{eq9.23b} for $T = \infty$.
Let $b^i = \hat b^i = c=0$.
Let $\varepsilon_2>0$ be a number to be specified below depending only on $d$, $\delta$, $p$, and $\theta$.
We find $\rho = \rho(\varepsilon_2) \in (1/2,1)$ and non-negative functions $\eta_k \in C_0^\infty(\bR^{d+1}_+)$, $k=1,2,\ldots$, satisfying the properties, in particular, \eqref{eq1230_3}, described in the proof of Theorem \ref{thm1}.
Observe that $u_k : = u \eta_k$ satisfies
\begin{equation}
							\label{eq0102_7}
-{u_k}_t + D_i (a^{ij}D_j u_k)  - \lambda u_k = D_i {g_k}_i + f_k
\end{equation}
in $\bR^{d+1}_+$,
where
$$
{g_k}_i = a^{ij} u D_j \eta_k + \eta_k g_i,
\quad
f_k = a^{ij}D_j u D_i \eta_k - g_i D_i \eta_k - u D_t \eta_k + \eta_k f.
$$
Here we note that by \eqref{eq1230_3}
$$
a^{ij}D_j u D_i \eta_k - g_i D_i \eta_k - u D_t \eta_k =: \tilde f_k
\subset M^{-1}\bL_{p,\theta} \subset M^{-1}\bH_{p,\theta}^{-1}.
$$
Then from \eqref{eq0102_5}
there exist $\widetilde {g_k}_i \in \bL_{p,\theta}$ satisfying
$D_i \widetilde {g_k}_i = \tilde f_k$ and
\begin{equation}
							\label{eq0102_8}
\|\widetilde {g_k}_i\|_{p,\theta} \le N \| M \tilde f_k \|_{\bH_{p,\theta}^{-1}}
\le N \| M \tilde f_k \|_{p,\theta},
\end{equation}
where we used $\|\cdot\|_{\bH_{p,\theta}^{-1}} \le \|\cdot\|_{\bL_{p,\theta}}$ in the last inequality.
Hence \eqref{eq0102_7} can be written as
$$
-{u_k}_t + D_i (a^{ij}D_j u_k)  - \lambda u_k = D_i ({g_k}_i + \widetilde {g_k}_i)+ \eta_k f.
$$
Using a translation of the coordinates and Proposition \ref{prop1231}, we get
\begin{align*}
&\sqrt{\lambda}\|u_k\|_{p,\theta}
+ \|M^{-1}u_k\|_{p,\theta}
+ \|D u_k\|_{p,\theta}
\le N_1\varepsilon^{1/(\beta'q)} \|Du_k\|_{p,\theta}\\
&\quad + N_0 \left(\|g_k\|_{p,\theta} + \|\widetilde{g_k}\|_{p,\theta}+ \lambda^{-1/2} \|\eta_k f\|_{p,\theta} \right),
\end{align*}
where $N_0=N_0(d,\delta,p,\theta)$, $N_1 = N_1(d,\delta,p,\theta,\rho)$, and
$q, \beta'$ are positive numbers determined by $p, \theta$.
Then from this and \eqref{eq0102_8}, we have
\begin{align*}
&\sqrt{\lambda}\|u_k\|_{p,\theta}
+ \|M^{-1}u_k\|_{p,\theta}
+ \|D u_k\|_{p,\theta}
\le N_0 \big( \|\eta_k g\|_{p,\theta} + \|M D\eta_k M^{-1} u\|_{p,\theta}\\
&\quad+ \|M D \eta_k Du\|_{p,\theta} +
\|M D \eta_k g\|_{p,\theta}
+ \|M^2 D_t \eta_k M^{-1} u\|_{p,\theta}+ \lambda^{-1/2} \|\eta_k f\|_{p,\theta}\big)\\
&\quad+N_1\varepsilon^{1/(\beta'q)}\|Du_k\|_{p,\theta},
\end{align*}
which together with the properties of $\eta_k$ in \eqref{eq1230_3} implies
\begin{align*}
&\sqrt{\lambda}\|u\|_{p,\theta}
+ \|M^{-1}u\|_{p,\theta}
+ \|D u\|_{p,\theta}
\le N_0 \left( \|g\|_{p,\theta} + \lambda^{-1/2}\|f\|_{p,\theta} \right)\\
&\,+ N_0 \varepsilon_2 \left(\|Du\|_{p,\theta} + \|M^{-1}u\|_{p,\theta}\right)
+N_1\varepsilon^{1/(\beta'q)}\left(\|Du\|_{p,\theta}
+\varepsilon_2\|M^{-1}u\|_{p,\theta}\right).
\end{align*}
As in the non-divergence case, by first choosing an appropriate $\varepsilon_2 \in (0,1)$ so that $N_0\varepsilon_2\le 1/3$ and then $\varepsilon = \varepsilon(d,\delta,p,\theta,\rho) \in (0,1)$ so that $N_1 \varepsilon^{1/(\beta' q)} < 1/3$, we finally prove \eqref{eq9.23b}.
For the assertions (i) and (ii) with non-trivial $b^i$, $\hat b^i$, and $c$, we proceed similarly as in the proof of Theorem \ref{thm1} along with the argument shown above for $\tilde f_k$.

Now we prove the last assertion. Rewrite the equation as
$$
-u_t + D_i(a^{ij} D_j u)  - \lambda u = D_i (g_i - b^i u) - \hat b^i D_i u - cu + f,
$$
and use the estimate proved above and the boundedness of the lower order coefficients on $x_1 \in (\sigma,\infty)$ to get
\begin{align*}
&\sqrt{\lambda}\|u\|_{p,\theta}
+ \|M^{-1}u\|_{p,\theta}
+ \|D u\|_{p,\theta}\\
&\le N_2 \left(\|g\|_{p,\theta} + \|b^i u\|_{p,\theta} + \lambda^{-1/2}\|f - \hat b^i D_i u - c u \|_{p,\theta} \right)\\
&\le N_2 \left( \|g\|_{p,\theta} + \lambda^{-1/2} \|f\|_{p,\theta}\right)
+ N_3 \left( (1+\lambda^{-1/2})\|u\|_{p,\theta} + \lambda^{-1/2}\|Du\|_{p,\theta} \right),
\end{align*}
where $N_2 = N_2(d,\delta,p,\theta)$ and $N_3=N_3(d,\delta,p,\theta, K)$.
Finally we choose a $\lambda_0 = \lambda_0(d,\delta,p,\theta,K)$ so that
$$
N_3 (1+\lambda^{-1/2}) < \sqrt\lambda /2,
\quad
N_3 \lambda^{-1/2} < 1/2
$$
for $\lambda \ge \lambda_0$. The theorem is proved.
\end{proof}

\bibliographystyle{plain}



\def\cprime{$'$}\def\cprime{$'$} \def\cprime{$'$} \def\cprime{$'$}
  \def\cprime{$'$} \def\cprime{$'$}


\end{document}